\newtheorem{thm}{Theorem}[section] 
\newtheorem{cor}[thm]{Corollary}
\newtheorem{prop}[thm]{Proposition}
\newtheorem{conj}[thm]{Conjecture}
\newtheorem{lem}[thm]{Lemma}
\theoremstyle{definition} 
\newtheorem{defn}[thm]{Definition}
\theoremstyle{remark}
\newtheorem{rem}[thm]{Remark}
\newtheorem{setup}[thm]{Setup}
\newtheorem{claim}[thm]{Claim}
\numberwithin{equation}{section}
\newtheorem{case}{Case}
\newcommand{\rk}[0]{\operatorname{rk}}
\newcommand{\codim}[0]{\operatorname{codim}}
\newcommand{\pr}{{\rm pr}}
\newcommand{\End}[0]{\operatorname{End}}
\newcommand{\Coker}[0]{\operatorname{Coker}}
\newcommand{\GL}[0]{\operatorname{GL}}
\newcommand{\PGL}[0]{\mathbb{P}\GL(r,\C)}
\newcommand{\Alb}{{\rm Alb}}
\newcommand{\reg}{{\rm{reg}}}
\newcommand{\Aut}[1]{\mathrm{Aut}(#1)}
\newcommand{\Ker}[1]{\mathrm{Ker}(#1)}
\newcommand{\Image}[1]{\mathrm{Im}(#1)}
\newcommand{\underalign}[2]{\quad \underset{\mathclap{\strut #1}}{#2}\quad}
\newcommand{\polar}{\beta}
\newcommand{\R}{\mathbb{R}}
\newcommand{\C}{\mathbb{C}}
\newcommand{\Q}{\mathbb{Q}}
\title[Minimal projective varieties satisfying Miyaoka's equality]
{Minimal projective varieties 
\\satisfying Miyaoka's equality}
\author{Masataka IWAI}
\address{Department of Mathematics, Graduate School of Science, the University of Osaka,
1-1, Machikaneyama-cho, Toyonaka, Osaka 560-0043, Japan.}
\email{{\tt masataka@math.sci.osaka-u.ac.jp}}
\email{{\tt masataka.math@gmail.com}}
\author{Shin-ichi MATSUMURA}
\address{Mathematical Institute 
$\&$ Division for the Establishment of Frontier Science of Organization for Advanced Studies, 
Tohoku University, 
6-3, Aramaki Aza-Aoba, Aoba-ku, Sendai 980-8578, Japan.}
\email{{\tt mshinichi-math@tohoku.ac.jp}}
\email{{\tt mshinichi0@gmail.com}}
\author{Niklas M\"uller}
\address{Department of Mathematics, Universit\"at Duisburg-Essen,
Thea-Leymann-Str. 9, 45127 Essen, Germany.}
\email{{\tt niklas.mueller@uni-duisburg-essen.de}}
\date{\today}
\subjclass[2020]{Primary 14E30, Secondary 14D06, 32Q26, 32Q30.}
\keywords{Abundance conjecture, 
numerical Kodaira dimension, 
projective flatness, 
numerically projective flatness, 
Bogomolov-Gieseker inequalities, 
Higgs sheaves, 
nonabelian Hodge correspondence, 
Miyaoka's inequality, 
uniformization theorems}
\begin{document}

\begin{abstract}
In this paper, we establish a structure theorem for a minimal
projective klt variety $X$ satisfying Miyaoka's equality $3c_2(X) = c_1(X)^2$.
Specifically, we prove that the canonical divisor $K_X$ is semi-ample
and that the Kodaira dimension $\kappa(K_X)$ is equal to $0$, $1$, or $2$.
Furthermore, based on this abundance result,
we show that a maximally quasi-\'etale cover of $X$ is smooth,
and we explicitly describe the structure of the Iitaka fibration.
In addition, we prove an analogous result for projective klt varieties
with nef anti-canonical divisor.
\end{abstract}

\maketitle

\tableofcontents

\section{Introduction}

\subsection{Main results}

The abundance conjecture, one of the most significant problems in algebraic geometry, 
remains largely unsolved in higher dimensions.

\begin{conj}[Abundance conjecture for minimal klt varieties]
\label{conj-abundance-conjecture}
Let $X$ be a $($complex$)$ projective klt variety. 
If $X$ is minimal $($i.e.,\,if the canonical divisor $K_X$ is nef$)$, 
then $K_X$ is semi-ample.
\end{conj}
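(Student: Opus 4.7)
Since Conjecture~\ref{conj-abundance-conjecture} is the abundance conjecture in full generality for minimal klt varieties, which remains open in dimension $\geq 4$, what follows is a sketch of the standard attack rather than a complete argument. I would split the problem into two classical sub-statements: \emph{nonvanishing}, i.e.\ $\kappa(K_X) \geq 0$, and \emph{equality of Kodaira dimensions}, i.e.\ $\kappa(K_X) = \nu(K_X)$, where $\nu$ denotes the numerical Kodaira dimension of Nakayama. Granted both, the Kawamata basepoint-free theorem, combined with the Iitaka fibration and a canonical bundle formula argument in the style of Fujino--Mori, delivers semi-ampleness.

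For nonvanishing I would distinguish the case $\nu(K_X) = 0$, handled by Nakayama's theorem ($\nu(K_X) = 0$ implies $\kappa(K_X) = 0$, hence $K_X \equiv 0$ and trivially semi-ample after finite \'etale cover), from $\nu(K_X) \geq 1$. In the latter case, my plan would be to produce sections of $mK_X$ using positivity of direct image sheaves (Viehweg, Cao--P\u{a}un), $L^2$-extension methods, or Hodge-theoretic input coming from an Iitaka-type fibration associated to a suitable minimal model of $X$. For the equality $\kappa(K_X) = \nu(K_X)$ in the intermediate range $0 < \nu(K_X) < \dim X$, I would reduce to showing that the general fiber of the Iitaka fibration is of Calabi--Yau type and propagate positivity to the base via the moduli part of the canonical bundle formula; Fujino's conjecture on semi-ampleness of this moduli part is then the precise technical input needed.

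The main obstacle, both in the general conjecture and in the specialization to the Miyaoka-equality setting of this paper, is converting the numerical positivity of $K_X$ into effective positivity. Under the extra assumption $3c_2(X) = c_1(X)^2$ imposed later in the paper, this difficulty is expected to be bypassed via the Bogomolov--Gieseker equality combined with the nonabelian Hodge correspondence: together they force the cotangent Higgs sheaf to be numerically projectively flat on a maximally quasi-\'etale cover of $X$. Uniformization then constrains the universal cover to a very short list of possibilities, from which semi-ampleness of $K_X$, together with the dichotomy $\kappa(K_X) \in \{0,1,2\}$ advertised in the abstract, can be read off case by case. The hardest step to make rigorous is the propagation of the Higgs-theoretic flatness through the MMP steps needed to reach the maximally quasi-\'etale cover without destroying the Chern-class equality.
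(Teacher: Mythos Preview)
The statement you were asked to prove is labelled \emph{Conjecture} in the paper, and indeed the paper does not prove it: the abundance conjecture for minimal klt varieties in full generality remains open, as you yourself note. There is therefore no ``paper's own proof'' to compare against, and your honest acknowledgment that what follows is only a sketch of the standard strategy is the correct attitude.

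Your outline of the general attack (nonvanishing plus $\kappa=\nu$, then basepoint-freeness via canonical bundle formula) is standard and reasonable as a survey, though of course none of these steps is currently known in general.

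Your final paragraph, speculating about how the paper handles the Miyaoka-equality case, contains some inaccuracies worth flagging. First, passing to a maximally quasi-\'etale cover involves no MMP steps at all: it is a finite quasi-\'etale cover (existence due to Greb--Kebekus--Peternell), and $\Q$-Chern class equalities are automatically preserved under such covers, so there is no ``propagation through MMP'' difficulty. Second, the paper does not argue that the entire cotangent Higgs sheaf is numerically projectively flat; rather, it analyses the Harder--Narasimhan filtration of $\Omega_X^{[1]}$ and shows that the graded pieces are numerically projectively flat (in the $\nu(K_X)\le 1$ case) or that only the maximal destabilizing piece carries the curvature (in the $\nu(K_X)\ge 2$ case). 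Semi-ampleness in the $\nu=1$ case is then obtained not by uniformization but via Campana's theory of special varieties and Bogomolov sheaves; uniformization enters only in the $\nu=2$ case, through the Pereira--Touzet integrability theorem for numerically flat foliations.
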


In this paper, we establish a structure theorem  
for minimal projective klt varieties whose Chern classes satisfy a certain extremal condition, the so-called \textit{Miyaoka equality}. As part of our results, we also resolve the abundance conjecture for such varieties.

Building on earlier work \cite{Miy77, Yau77}, Miyaoka proved in \cite[Chapter~7]{Miy87} that a smooth minimal projective variety $X$ satisfies \textit{Miyaoka's inequality}: 
$$
3c_{2}(\Omega_{X}^{1}) - c_{1}(\Omega_{X}^{1})^{2} \geq 0.
$$ 
Subsequently, numerous analogous inequalities have been established; see, for example, \cite[Theorem~5.6]{Lan02}, \cite[Theorem~7.2]{RT16}, \cite{LT18, GKPT19a, RT22, GT22, LL23}, and the references therein.
These inequalities involving Chern classes have a long history and play an important role in higher-dimensional algebraic geometry.

The structure of varieties $X$ satisfying equality in such inequalities is expected to be well-understood in detail.
For instance, building on \cite{Yau77}, Miyaoka proved in \cite{Miy84} that a smooth projective surface $X$ of general type satisfying Miyaoka's equality
$$
3c_{2}(\Omega_{X}^{1}) - c_{1}(\Omega_{X}^{1})^{2} = 0
$$
must be an (infinite) \'etale quotient of the unit ball. Later, Peternell-Wilson \cite{PW96} explicitly determined which minimal terminal varieties of dimension three can satisfy Miyaoka's equality. 
The case $\nu(K_X) = 0$, where Miyaoka's equality is equivalent to $c_2(\Omega_X^1) = 0$ and $X$ must be a finite \'etale quotient of a complex torus, has been extensively studied, including in the singular case (see \cite{GKP16, LT18, CGG23}).
More recently, the case where $\nu(K_X) \leq 1$ and $c_2(\Omega_X^1) = 0$ has been studied for compact K\"ahler manifolds in \cite{IM22}, where it was shown that $X$ admits the structure of a smooth abelian fibration over a curve of general type up to finite \'etale cover. We emphasize that in all of the above cases, the abundance conjecture was shown to hold for $X$. Moreover, varieties satisfying related extremal Chern class conditions have also been studied in \cite{GKPT19a, HS21, GKP21, Pat23}.

Our main result extends, generalizes, and unifies several of the aforementioned results \cite{Miy84, Miy87, PW96, IM22}.  
Specifically, we first prove that Miyaoka's inequality continues to hold for minimal projective \textit{klt} varieties.  
Moreover, we resolve the abundance conjecture for a minimal klt variety $X$ satisfying the equality
$$
3c_{2}(\Omega_{X}^{1}) - c_{1}(\Omega_{X}^{1})^{2} = 0,
$$
and explicitly describe the geometry of $X$.  
In particular, for minimal terminal threefolds, Theorem~\ref{thm-main1} precisely recovers the classification given in \cite{PW96}.

\begin{thm}[{Main Result}]\label{thm-main1}
Let $X$ be a projective klt variety of dimension $n$ with nef canonical divisor $K_{X}$. Then, the following statements hold$:$

\smallskip

$(A)$ Miyaoka's inequality holds for ample divisors $H_1, \ldots, H_{n-2}$ on $X$$:$
\begin{equation*}
    \left( 3 \widehat{c}_2(\Omega_{X}^{[1]}) - \widehat{c}_1(\Omega_{X}^{[1]})^2 \right)
H_1\cdots H_{n-2} \geq 0, 
\end{equation*}
where $\widehat{c}_2(\Omega_{X}^{[1]})$ and 
$\widehat{c}_1(\Omega_{X}^{[1]})$ denote the $\Q$-Chern classes 
of the cotangent sheaf $\Omega_{X}^{[1]}$. 

\smallskip

$(B)$ Assume that Miyaoka's equality holds for some ample divisors $H_i$ on $X$$:$
\begin{equation}
\label{eq-miyaoka}
 \left( 3 \widehat{c}_2(\Omega_{X}^{[1]}) - \widehat{c}_1(\Omega_{X}^{[1]})^2 \right)
H_1\cdots H_{n-2}=0.
\end{equation}
Then, the canonical divisor $K_X$ is semi-ample and $\nu(K_{X})=\kappa(K_{X})$ is equal to $0$, $1$, or $2$. 

\smallskip

Moreover, there exists a finite quasi-\'etale cover $X'\rightarrow X$ such that, depending on the Kodaira dimension, one of the following holds:
\begin{itemize}
\item[$(i)$] In the case where $\nu(K_{X})=\kappa(K_{X})=0$, 
the variety $X'$ is isomorphic to an abelian variety. 

\item[$(ii)$] In the case where $\nu(K_{X})=\kappa(K_{X})=1$, 
the variety $X'$ admits the structure of an abelian group scheme $X' \rightarrow C$  over a curve $C$ of general type.

\item[$(iii)$] In the case where $\nu(K_{X})=\kappa(K_{X})=2$, the variety $X'$ is isomorphic to the product $A \times S$ of an abelian variety $A$ 
and a smooth surface $S$ whose universal cover is the unit ball in $\mathbb{C}^{2}$. 
\end{itemize}
In particular, in all cases, the variety $X$ is smooth up to finite quasi-\'etale covers.
\end{thm}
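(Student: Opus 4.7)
For part $(A)$, I would combine three ingredients extended to the klt setting. First, generic semi-positivity of the reflexive cotangent sheaf $\Omega_X^{[1]}$ when $K_X$ is nef, in the strong form that every torsion-free quotient has pseudo-effective determinant (a singular Miyaoka--Campana--P\u{a}un type statement); this ensures nonnegativity of all slopes in the Harder--Narasimhan filtration of $\Omega_X^{[1]}$ with respect to a multi-polarization extending $(H_1,\ldots,H_{n-2})$. Second, the Bogomolov--Gieseker inequality for slope-semistable reflexive sheaves on klt varieties due to Greb--Kebekus--Peternell--Taji. Third, the standard Harder--Narasimhan decomposition of the Chern character combination, expressing
\[
\left(3\widehat{c}_2(\Omega_X^{[1]}) - \widehat{c}_1(\Omega_X^{[1]})^2\right)\cdot H_1\cdots H_{n-2}
\]
as a nonnegative combination of the $\Q$-discriminants $\widehat{\Delta}(\mathcal{Q}_i)\cdot H_1\cdots H_{n-2}$ of the HN quotients $\mathcal{Q}_i$ together with nonnegative squared slope differences, yielding $(A)$.

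For part $(B)$, Miyaoka's equality forces every term in the above decomposition to vanish. Hence each HN quotient $\mathcal{Q}_i$ is slope-semistable with vanishing $\Q$-discriminant, and the slopes satisfy rigid linear constraints. By the nonabelian Hodge correspondence on klt varieties (Greb--Kebekus--Peternell--Taji), each $\mathcal{Q}_i$ then becomes, after pullback to a common maximally quasi-\'etale cover $X'\to X$, locally free and projectively flat, so that $\Omega_{X'}^{[1]}$ is an iterated extension of projectively flat vector bundles on the smooth variety $X'$.

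From this projectively flat structure I would extract the Iitaka fibration directly, bypassing the general abundance conjecture. The slope-zero subsheaf of $\Omega_{X'}^{[1]}$ is flat and, by Druel-type integrability, defines an algebraically integrable foliation whose general leaf is an abelian variety (a Beauville--Bogomolov--Druel style observation). The complementary positive-slope part controls the Kodaira dimension; its rank is at most $2$, because a projectively flat quotient of $\Omega^{[1]}$ of rank $r\geq 3$ would satisfy Yau's equality $2(r+1)\widehat{c}_2 = r\widehat{c}_1^2$, which is compatible with Miyaoka's $3\widehat{c}_2 = \widehat{c}_1^2$ only for $r\leq 2$. The leaf-space of the slope-zero foliation thus realizes the Iitaka fibration, simultaneously yielding semi-ampleness of $K_X$ and $\nu(K_X)=\kappa(K_X)\in\{0,1,2\}$.

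For the three explicit cases, $\kappa=0$ reduces to the singular Beauville--Bogomolov--Druel decomposition of \cite{GKP16, LT18, CGG23}, yielding that $X'$ is abelian; $\kappa=1$ uses the klt extension of the strategy of \cite{IM22} to identify $X'\to C$ as an abelian group scheme over a curve of general type; and $\kappa=2$ combines Miyaoka--Yau on the two-dimensional Iitaka base (identifying it as a ball quotient) with projective flatness of the HN filtration to produce the product $A\times S$. The main anticipated obstacle is the product splitting in case $(iii)$: showing that the extension of the abelian factor by the ball-quotient factor becomes trivial after a finite cover, which I expect to follow from a Hodge-theoretic vanishing for flat bundles arising from the polarized variation of Hodge structure on the ball-quotient base, combined with a careful application of the singular Beauville--Bogomolov--Druel decomposition.
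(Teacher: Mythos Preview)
Your approach to part $(A)$ contains a genuine gap. The combination $3\widehat{c}_2 - \widehat{c}_1^2$ does \emph{not} decompose as a nonnegative sum of the discriminants $\widehat{\Delta}(\mathcal{Q}_i)$ of the Harder--Narasimhan quotients plus squared slope differences; that identity holds for the full discriminant $\widehat{\Delta}(\Omega_X^{[1]}) = 2n\,\widehat{c}_2 - (n-1)\,\widehat{c}_1^2$, and even there the cross terms carry the wrong sign until one applies the Hodge index theorem. Feeding the ordinary Bogomolov--Gieseker inequality for the HN pieces into that decomposition yields only $\widehat{c}_2 \geq \frac{r_1-1}{2r_1}\,\widehat{c}_1^2$, where $r_1$ is the rank of the maximal destabilizer $\mathcal{E}_1$, and $\frac{r_1-1}{2r_1} < \frac{1}{3}$ whenever $r_1 \leq 2$. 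The missing ingredient, used in the paper following Simpson and Langer, is the \emph{Higgs} Bogomolov--Gieseker inequality applied to the canonical Higgs sheaf $(\mathcal{E}_1 \oplus \mathcal{O}_X,\theta)$ built from $\mathcal{E}_1 \subset \Omega_X^{[1]}$; this gives the sharper bound $\widehat{c}_2(\mathcal{E}_1) \geq \frac{r_1}{2(r_1+1)}\,\widehat{c}_1(\mathcal{E}_1)^2$, with $\frac{r_1}{2(r_1+1)} \geq \frac{1}{3}$ precisely when $r_1 \geq 2$. (Your outline does recover $(A)$ when $\nu(K_X)\leq 1$, since then $\widehat{c}_1^2 = 0$ and one only needs $\widehat{c}_2 \geq 0$.)

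This error propagates into part $(B)$. Under Miyaoka's equality with $\nu(K_X) \geq 2$, the maximal destabilizer $\mathcal{E}_1$ satisfies the Higgs equality $2(r_1+1)\,\widehat{c}_2(\mathcal{E}_1) = r_1\,\widehat{c}_1(\mathcal{E}_1)^2$, which is \emph{not} projective flatness ($\widehat{\Delta}=0$); so your picture of $\Omega_{X'}^{[1]}$ as an iterated extension of projectively flat bundles is incorrect in this case. The paper instead shows that equality forces $r_1 = 2$, that $\mathcal{G}_2 := \Omega_X^{[1]}/\mathcal{E}_1$ is flat, and then invokes Pereira--Touzet's theorem on numerically flat foliations to obtain the product splitting $A \times S$ directly --- no separate Hodge-theoretic vanishing is needed. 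In the $\nu(K_X) = 1$ case your sketch glosses over the two hardest steps: first, proving abundance, which the paper does via Campana's theory of special varieties and Bogomolov sheaves rather than via a foliation leaf-space; and second, establishing smoothness of a maximally quasi-\'etale cover, which requires showing that the Iitaka fibration is a stratified $C^\infty$-orbifold fibre bundle and then eliminating multiple fibres by a mixture of topological and group-theoretic arguments.
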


Note that a similar result was obtained for varieties of large Kodaira dimension in \cite{HS21}, 
where Hao and Schreieder showed that a minimal projective variety $X$ satisfying 
$\widehat{c}_2(\Omega_{X}^{[1]}) K_X^{n-2} = 0$ and $\kappa(K_{X}) = n -1$ 
is isomorphic to the product of an elliptic curve and a variety of general type, 
up to quasi-\'etale covers and birational modifications.  

The following corollary, established for smooth varieties in \cite{IM22}, 
is an immediate consequence of Theorem~\ref{thm-main1}.  
As explained in the introduction of \cite{IM22}, 
this result can be viewed as a partial complement to that of \cite{LP18}.

\begin{cor}\label{cor-Abundance-theorem-c2}
Let $X$ be a projective klt variety with nef canonical divisor. 
Assume that there exist  ample divisors $H_1, \ldots, H_{n-2}$ on $X$ such that
$$
\widehat{c}_2(\Omega_{X}^{[1]}) H_1 \cdots H_{n-2}  = 0.
$$ 
Then, the canonical divisor $K_X$ is semi-ample and $\nu(K_{X})=\kappa(K_{X})$ is equal to $0$ or $1$. 
\end{cor}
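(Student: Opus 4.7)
The plan is to derive Corollary \ref{cor-Abundance-theorem-c2} by combining the hypothesis with the Miyaoka inequality from Theorem \ref{thm-main1}(A) to force Miyaoka's equality \eqref{eq-miyaoka}, invoking Theorem \ref{thm-main1}(B), and finally ruling out the case $\kappa(K_X) = 2$ via the explicit product structure in Theorem \ref{thm-main1}(B)(iii).

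First I would apply Theorem \ref{thm-main1}(A) to the $H_1, \ldots, H_{n-2}$ in the hypothesis. Nefness of $K_X$ yields $\widehat{c}_1(\Omega_X^{[1]})^2 H_1 \cdots H_{n-2} = K_X^2 H_1 \cdots H_{n-2} \geq 0$, while Miyaoka's inequality together with the hypothesis $\widehat{c}_2(\Omega_X^{[1]}) H_1 \cdots H_{n-2} = 0$ gives the reverse inequality. Hence both quantities vanish, so \eqref{eq-miyaoka} holds for these $H_i$, and Theorem \ref{thm-main1}(B) shows that $K_X$ is semi-ample with $\nu(K_X) = \kappa(K_X) \in \{0, 1, 2\}$.

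To exclude $\kappa(K_X) = 2$, suppose for contradiction that this case occurs. Theorem \ref{thm-main1}(B)(iii) then produces a finite quasi-\'etale cover $\pi : X' \to X$ with $X' \cong A \times S$, where $A$ is an abelian variety and $S$ is a smooth ball-quotient surface. Since $\pi$ is finite, the pullbacks $H'_i := \pi^* H_i$ remain ample on $X'$, and compatibility of $\Q$-Chern classes with quasi-\'etale covers gives
$$
\widehat{c}_2(\Omega_{X'}^{[1]}) H'_1 \cdots H'_{n-2} = \deg(\pi) \cdot \widehat{c}_2(\Omega_X^{[1]}) H_1 \cdots H_{n-2} = 0.
$$
On the other hand, $X'$ is smooth, and the splitting $\Omega_{X'}^1 \cong p_A^* \Omega_A^1 \oplus p_S^* \Omega_S^1$ together with $c_i(\Omega_A^1) = 0$ for $i \geq 1$ yields $c_2(\Omega_{X'}^1) = p_S^* c_2(\Omega_S^1)$; by the Miyaoka--Yau equality for ball quotients, $c_2(\Omega_S^1) > 0$. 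Since restricting $H'_1 \cdots H'_{n-2}$ to any fibre $A \times \{s\}$ of $p_S$ gives a positive intersection of ample classes on $A$, the complete-intersection surface $Y = H'_1 \cap \cdots \cap H'_{n-2}$ maps finitely and surjectively onto $S$ under $p_S$, and the projection formula yields $c_2(\Omega_{X'}^1) H'_1 \cdots H'_{n-2} > 0$, contradicting the vanishing above. This excludes $\kappa(K_X) = 2$ and leaves $\kappa(K_X) \in \{0, 1\}$.

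The substantive input is Theorem \ref{thm-main1}; the only potentially delicate points are the functoriality of $\widehat{c}_i(\Omega^{[1]})$ under the finite quasi-\'etale cover (standard in the setting of $\Q$-Chern classes for klt varieties) and the elementary K\"unneth-type positivity argument on $A \times S$, neither of which should present a genuine obstacle.
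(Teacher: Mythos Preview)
Your proof is correct, but it takes a different route from the paper's. Both arguments begin the same way: the hypothesis $\widehat{c}_2(\Omega_X^{[1]})H_1\cdots H_{n-2}=0$ together with nefness of $K_X$ and Miyaoka's inequality forces Miyaoka's equality, so Theorem~\ref{thm-main1}(B) applies. The difference is in how the case $\nu(K_X)\geq 2$ is excluded.

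The paper rules out $\nu(K_X)\geq 2$ \emph{before} invoking the structural part of Theorem~\ref{thm-main1}(B): assuming $\nu(K_X)\geq 2$, Corollary~\ref{cor-inequality-c2-nu2} produces (after a quasi-\'etale cover) a nef $\Q$-line bundle $\mathcal{A}\subset\Omega_X^{[1]}$ with $\nu(\mathcal{A})\geq 2$; restricting to a general complete-intersection surface $S$ and using that $\mathcal{A}|_S$ injects into $\Omega_S^{[1]}$, Bogomolov--Sommese vanishing on klt surfaces forces $c_1(\mathcal{A})^2 H^{n-2}=0$, a contradiction. You instead run Theorem~\ref{thm-main1}(B) all the way, land in case~(iii), and then observe on the explicit product $A\times S$ (with $S$ a ball quotient, hence $c_2(S)>0$) that $c_2(\Omega^1_{X'})H'_1\cdots H'_{n-2}>0$ contradicts the vanishing pulled back from $X$.

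Your route is arguably more economical from the reader's standpoint, since it uses only the statement of Theorem~\ref{thm-main1} as a black box plus an elementary positivity computation on a product; the paper's route is more self-contained at this point in the text, since Corollary~\ref{cor-inequality-c2-nu2} and Bogomolov--Sommese are lighter inputs than the full uniformization in case~(iii). Either way the argument is sound; the functoriality of $\widehat{c}_2$ under quasi-\'etale covers and the K\"unneth computation on $A\times S$ are indeed routine, as you note.
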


In another direction, we obtain a structure theorem (see Theorem~\ref{thm-main2} below) 
for a projective variety $X$ with nef anti-canonical divisor $-K_{X}$ and $\widehat{c}_{2}(\mathcal{T}_{X}) = 0$.  
This theorem generalizes the results of \cite{Cao13, Ou17} for smooth varieties and \cite{IJL23} for terminal threefolds to projective klt varieties of arbitrary dimension.

\begin{thm}\label{thm-main2}
Let $X$ be a projective klt variety of dimension $n$ with nef anti-canonical divisor $-K_{X}$.
Assume that there exist  ample divisors $H_1, \ldots, H_{n-2}$ on $X$ such that 
$$
\widehat{c}_2(\mathcal{T}_{X})H_1\cdots H_{n-2}=0,
$$
where $\mathcal{T}_{X}$ denotes the tangent sheaf of $X$. 
Then, the numerical dimension $\nu(-K_X)$ is equal to $0$ or $1$. Moreover,
there exists a finite quasi-\'etale cover $X'\rightarrow X$ such that, depending on the numerical dimension $\nu(-K_{X})$, one of the following holds$:$
\begin{itemize}
\item[$(i)$] In the case where $\nu(-K_{X})=0$, the variety $X'$ is isomorphic to an abelian variety.

\item[$(ii)$] In the case where $\nu(-K_{X})=1$, the variety $X'$ admits a locally trivial fibration $X' \to A$ onto an abelian variety $A$ with fibre $\mathbb{P}^{1}$.
\end{itemize}
In particular, in all cases, the variety $X$ is smooth up to finite quasi-\'etale covers.
\end{thm}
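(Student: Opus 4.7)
The plan is to combine the Bogomolov--Gieseker inequality for reflexive sheaves on klt varieties with structure results in the style of Cao--H\"oring and Ou for varieties with nef anti-canonical divisor. First I would establish that $\mathcal{T}_X$ is $(H_1,\ldots,H_{n-2})$-semistable: any destabilizing quotient would contradict the nefness of $-K_X=\widehat{c}_1(\mathcal{T}_X)$ through a Miyaoka-type argument. The Bogomolov--Gieseker inequality for semistable reflexive sheaves on klt varieties then gives
\[
    \bigl(2n\,\widehat{c}_2(\mathcal{T}_X) - (n-1)\,\widehat{c}_1(\mathcal{T}_X)^2\bigr)\,H_1\cdots H_{n-2} \geq 0,
\]
which, combined with the hypothesis $\widehat{c}_2(\mathcal{T}_X)H_1\cdots H_{n-2}=0$ and with $(-K_X)^2 H_1\cdots H_{n-2}\geq 0$ by nefness, forces $(-K_X)^2 H_1\cdots H_{n-2}=0$ and equality in Bogomolov--Gieseker. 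Consequently $\nu(-K_X)\leq 1$ (after varying the polarization and exploiting openness of semistability) and $\mathcal{T}_X$ is numerically projectively flat, which brings to bear the nonabelian Hodge machinery developed earlier in the paper.

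In the case $\nu(-K_X)=0$, the divisor $K_X$ is numerically trivial, so the singular Beauville--Bogomolov decomposition (Druel; H\"oring--Peternell; Greb--Guenancia--Kebekus) yields a quasi-\'etale cover $X'$ that decomposes as a product of abelian, strict Calabi--Yau, and irreducible holomorphic symplectic factors. Since $\widehat{c}_2(\mathcal{T}_{X'})H^{n-2}=0$ is inherited by the cover and any Calabi--Yau or symplectic factor contributes strictly positive second Chern class against ample polarizations, only the abelian factor can survive, and $X'$ is an abelian variety.

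In the case $\nu(-K_X)=1$, numerical projective flatness of $\mathcal{T}_X$ together with the nonabelian Hodge framework for klt varieties would provide, after a suitable finite quasi-\'etale cover $X'\to X$, a direct sum decomposition $\mathcal{T}_{X'}\cong L\oplus\mathcal{E}$ in which $L$ is a line bundle with $c_1(L)\equiv -K_{X'}$ and $\mathcal{E}$ is numerically flat of rank $n-1$. The flat summand $\mathcal{E}$ should integrate to a regular algebraic foliation whose leaf space is identified with the albanese variety $A=\Alb(X')$ of dimension $n-1$. The induced morphism $X'\to A$ is then locally trivial, and the nefness of $-K_{X'}$ together with $\nu(-K_{X'})=1$ forces the one-dimensional fibers to be rational curves, hence $\mathbb{P}^{1}$.

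The principal difficulty is the $\nu(-K_X)=1$ case: transporting the Cao--H\"oring and Ou type arguments to the klt setting. This requires integrating the numerically flat summand of $\mathcal{T}_{X'}$ to a regular algebraic foliation across the klt singularities, identifying its leaf space with $\Alb(X')$ via an Iitaka-type inequality $\dim\Alb(X')\leq n-\nu(-K_{X'})$, and verifying that local triviality together with rationality of the fibers survive the descent. In the smooth case this was handled by Cao and Ou; the klt extension should combine their methods with the reflexive-sheaf techniques developed for the proof of Theorem \ref{thm-main1}.
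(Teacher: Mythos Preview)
Your first step fails: $\mathcal{T}_X$ is \emph{not} semistable in general when $-K_X$ is nef. Indeed, the very conclusion you are aiming for already furnishes a counterexample: if $X' = \mathbb{P}^1 \times A$ then $\mathcal{T}_{X'} \cong \mathcal{O}(2) \oplus \mathcal{O}^{\oplus (n-1)}$ (pulled back from the factors), which has a destabilizing subsheaf. What \emph{is} true, via Miyaoka-type arguments, is that $\mathcal{T}_X$ is \textit{generically nef} (every torsion-free quotient has non-negative slope); this is much weaker than semistability and does not let you invoke Bogomolov--Gieseker directly. Consequently your deduction of $\nu(-K_X)\le 1$ and of numerical projective flatness of $\mathcal{T}_X$ collapses, and the direct-sum splitting $\mathcal{T}_{X'}\cong L\oplus \mathcal{E}$ you posit in the $\nu=1$ case has no foundation.

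The paper's route is quite different and avoids semistability of $\mathcal{T}_X$ altogether. It first invokes the structure theorem for klt varieties with nef anti-canonical divisor (Matsumura--Wang, Wang) to obtain, after a quasi-\'etale cover, a locally trivial Albanese fibration $X \to A$ whose fiber $F$ is klt with $-K_F$ nef and $\widehat q(F)=0$. The hypothesis $\widehat c_2(\mathcal{T}_X)=0$ then restricts to $\widehat c_2(\mathcal{T}_F)=0$, and the heart of the argument is a lemma showing that any such $F$ with $\widehat q(F)=0$ and $\dim F\ge 2$ has $\widehat c_2(\mathcal{T}_F)>0$. That lemma is proved by a case split on $\nu(-K_F)$: for $\nu=0$ one quotes \cite{LT18}; for $\nu\ge 2$ one uses the Harder--Narasimhan analysis (not semistability) to produce a flat subsheaf of $\Omega_F^{[1]}$ of rank $n-1$, contradicting $\widehat q(F)=0$; for $\nu=1$ one shows via the HN filtration and a Kawamata--Viehweg-type vanishing that $F$ admits no $K_F$-negative extremal contraction, forcing $K_F$ nef and hence $\nu(-K_F)=0$, a contradiction. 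The upshot is $\dim F\le 1$, which gives the dichotomy in the statement without ever asserting that $\mathcal{T}_X$ is semistable or numerically projectively flat.
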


\subsection{Overview and outline of proofs}

This subsection outlines the proofs of Theorems~\ref{thm-main1} and \ref{thm-main2}, 
and highlights our new contributions compared to previously known results in the literature.  

To prove Miyaoka's inequality for klt varieties, 
we extend the argument of \cite{Lan02} to the klt setting by using Langer's proof of the Bogomolov-Gieseker inequality.  
Note that the argument in \cite{Lan02} is essentially the same as Miyaoka's original proof in \cite{Miy87}, with a slight simplification introduced in \cite{Sim88}. In any case, the possibility of extending these arguments to klt varieties 
might be already  known to experts.

Let us now consider the case where a minimal projective variety $X$ satisfies Miyaoka's equality.  
In the case $\nu(K_{X})=0$, 
the desired result was obtained in \cite[Theorem~1.17]{GKP16} and \cite[Theorem~1.2]{LT18}.  
Thus, it remains to consider two cases: $\nu(K_X) = 1$, discussed in Subsection~\ref{Ssec-proof}, and $\nu(K_X) \geq 2$, discussed in Subsection~\ref{Ssec-proof2}.  

Both cases require a detailed analysis of the Harder-Narasimhan filtration of the cotangent 
sheaf $\Omega_{X}^{[1]}$ and of the second $\mathbb{Q}$-Chern class $\widehat{c}_2(\Omega_{X}^{[1]})$.  
In particular, as in previous works \cite{Miy87, Cao13, Ou17, IM22}, 
the first graded piece $\mathcal{E}_1 \subset \Omega_{X}^{[1]}$ of the Harder-Narasimhan filtration 
plays a crucial role in our proof.  
We study the Harder-Narasimhan filtration in detail in Section~\ref{section:3}.

In the case $\nu(K_{X})=1$, we first prove that $K_{X}$ is semi-ample.  
This part is analogous to the argument in \cite{IM22}, where the result was established in the smooth case. 
The proof relies on Campana's notion of special varieties 
and  the result of Lazi\'c-Peternell \cite{LP18} 
on the abundance conjecture for varieties of numerical dimension one.
At this point, however, our argument departs significantly from the existing literature, 
since it is not clear from the study of the Harder-Narasimhan filtration 
whether a maximally quasi-\'etale cover of $X$ is necessarily smooth.  
To proceed, we first show that the Iitaka fibration $f\colon X \rightarrow C$
 carries the structure of a stratified $C^\infty$-orbifold fibre bundle.  
This requires a precise local understanding of numerically flat sheaves on klt varieties (see Subsection~\ref{subsec-flat}).  
Then, using a combination of topological and algebraic methods, we prove that the multiple fibres can be eliminated.  
Once smoothness is established, we conclude by applying \cite{Hor13, IM22}.

In the case $\nu(K_X) \geq 2$, an examination of the Harder-Narasimhan filtration together with some standard arguments shows that $X$ is smooth.  
The result then follows from an integrability theorem for numerically flat foliations obtained by Pereira-Touzet \cite{PT13}.  
We find it remarkable that, to the best of our knowledge, the conclusion of Theorem~\ref{thm-main1} (iii) had not even been conjectured before, although the subject has been studied extensively in the literature.

The proof of Theorem~\ref{thm-main2} is presented in Section~\ref{Sec-proof3}. 
In the proof, we apply the structure theorem for a projective variety with nef anti-canonical divisor \cite{CH19, MW21}, 
which provides a locally constant Albanese map $X \to \Alb(X)$ 
after replacing $X$ by a finite quasi-\'etale cover. 
We then analyze in detail the Harder-Narasimhan filtration of the relative tangent sheaf $\mathcal{T}_{X/A}$ 
to show that the fibre dimension of $X \to \Alb(X)$ is at most one.

\section*{Acknowledgments}
M.\,I.\ and N.\,M.\ express their gratitude to the organizers of the ``Alpine Meeting on Nonpositive Curvature in K\"ahler Geometry," held in June 2023, where discussions served as a foundation for this paper.  
M.\,I.\ would like to thank Professors Osamu Fujino and Fr\'ed\'eric Touzet for answering his questions, and Xiaojun Wu for pointing out that Corollary~\ref{cor-Abundance-theorem-c2} follows immediately from Theorem~\ref{thm-main1}.  
He would also like to thank Professor Haidong Liu for informing him that Theorem~\ref{thm-main2} follows from \cite{IJL23} for terminal varieties of dimension $3$ and for his further valuable suggestions.  
N.\,M.\ is indebted to Professor Daniel Greb for his valuable comments and suggestions.  

M.\,I.\ was supported by the Grant-in-Aid for Early Career Scientists, No.~22K13907.  
S.\,M.\ was supported by the Grant-in-Aid for Scientific Research (B), No.~21H00976; the Fostering Joint International Research (A), No.~19KK0342; and the JST FOREST Program, No.~JPMJFR2368.  
N.\,M.\ was supported by the DFG Research Training Group 2553, ``Symmetries and Classifying Spaces: Analytic, Arithmetic and Derived."

\section{Preliminary results}\label{sec-pre}

\subsection{Notation and Conventions}\label{subsec-notation}

Throughout this paper, we work over the field of complex numbers.  
We employ the standard notation and conventions of \cite{Har77, KM98}, as detailed in \cite[Subsection~2.1]{IMZ23}.  
Moreover, we will simply say that $X$ is a \emph{klt variety} if the pair $(X,0)$ is klt, 
equivalently, the variety $X$ is log terminal.  
All sheaves considered in this paper are assumed to be coherent, unless explicitly stated otherwise.  
Furthermore, a reflexive sheaf $\mathcal{L}$ of rank one is called a \textit{$\mathbb{Q}$-line bundle} 
if its reflexive $m$-th tensor power 
$\mathcal{L}^{[\otimes m]} := (\mathcal{L}^{\otimes m})^{\vee \vee}$ is locally free for some $m \in \mathbb{N}$.

\subsection{Maximally quasi-\'etale covers}\label{subsec-qec}

In this subsection, following \cite[Section~2.5]{GKP21}, we review maximally quasi-\'etale covers.  

A normal variety $X$ is said to be \textit{maximally quasi-\'etale} if the morphism 
$\widehat{\pi}_{1}(X_{\reg}) \to \widehat{\pi}_{1}(X)$ induced by the natural inclusion 
$i\colon X_{\reg} \hookrightarrow X$ is an isomorphism, where $X_{\reg}$ denotes the smooth locus of $X$ and $\widehat{\pi}_{1}(\bullet)$ denotes the \'etale fundamental group.  
By \cite[Theorem~1.14]{GKP16b}, any klt variety $Y$ admits a finite quasi-\'etale cover $\nu \colon X \rightarrow Y$ such that $X$ is maximally quasi-\'etale.  
If $X$ is maximally quasi-\'etale, then any projective (or linear) representation of the (topological) fundamental group $\pi_{1}(X_{\reg})$ can be extended to a representation of $\pi_{1}(X)$ by the following result:

\begin{thm}[{\cite[Proof of Proposition 3.10]{GKP22}}]
\label{thm-representation-quasi-etale}
Let $X$ be a maximally quasi-\'etale variety.
\begin{itemize}
\item[$(1)$] Any representation $\rho_0 \colon  \pi_{1}(X_{\reg}) \rightarrow \PGL$ factors through $\pi_{1}(X)$$:$
\begin{equation*}
\xymatrix@C=40pt@R=30pt{
\pi_{1}(X_{\reg}) \ar@/^18pt/[rr]^{\rho_0} \ar@{->>}[r]_{i_{*}} & \pi_1(X) \ar@{->}[r]_{\rho\quad} & \PGL.\\ 
 }
 \end{equation*}
\item[$(2)$] 
Any representation $\rho_0 \colon  \pi_{1}(X_{\reg}) \rightarrow \GL(r, \mathbb{C})$ factors through $\pi_{1}(X)$.
\end{itemize}
\end{thm}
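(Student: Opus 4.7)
The plan is to show that $\ker(i_*) \subseteq \ker(\rho_0)$, from which the desired factorization follows immediately. Writing $\Gamma := \pi_1(X_{\reg})$ and $\Gamma' := \pi_1(X)$ so that $i_* \colon \Gamma \twoheadrightarrow \Gamma'$, the strategy is to argue by contradiction: if some $\gamma \in \ker(i_*)$ satisfied $\rho_0(\gamma) \neq 1$, I would construct a finite quotient of $\Gamma$ in which $\gamma$ has nontrivial image, and then exploit the maximally quasi-\'etale hypothesis to obtain a contradiction at the level of profinite completions.

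First, I would establish that the image $H := \rho_0(\Gamma)$ is a finitely generated residually finite group. This relies on Malcev's theorem: every finitely generated subgroup of $\GL(n, \mathbb{C})$ is residually finite. In case $(2)$ this applies directly since $H \subseteq \GL(r, \mathbb{C})$. In case $(1)$ one observes that $\PGL$ is itself linear --- it embeds into $\GL(r^2 - 1, \mathbb{C})$ via the adjoint action on $\mathfrak{sl}_r(\mathbb{C})$ --- so $H$ is again a finitely generated linear group and hence residually finite.

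Now suppose towards contradiction that there exists $\gamma \in \ker(i_*)$ with $\rho_0(\gamma) \neq 1$. By residual finiteness of $H$, one finds a normal subgroup $K \triangleleft H$ of finite index that does not contain $\rho_0(\gamma)$. Pulling back, $N := \rho_0^{-1}(K) \triangleleft \Gamma$ is of finite index and does not contain $\gamma$, so the natural projection $\Gamma \twoheadrightarrow \Gamma/N =: Q$ sends $\gamma$ to a nontrivial element of the finite group $Q$. By the universal property of profinite completion, this map factors as $\Gamma \to \widehat{\Gamma} \twoheadrightarrow Q$.

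To finish, I would invoke the maximally quasi-\'etale hypothesis: the induced map $\widehat{\pi}_1(X_{\reg}) \to \widehat{\pi}_1(X)$ is an isomorphism, so the surjection $\widehat{\Gamma} \twoheadrightarrow Q$ descends to a surjection $\widehat{\Gamma'} \twoheadrightarrow Q$, and composing with $\Gamma' \to \widehat{\Gamma'}$ shows that the original map $\Gamma \twoheadrightarrow Q$ factors through $\Gamma'$. But $\gamma \in \ker(i_*)$ has trivial image in $\Gamma'$, contradicting the fact that its image in $Q$ is nontrivial. The only real obstacle in this approach is the residual finiteness step, which becomes immediate once one recognizes $\PGL$ as a linear group; everything else is a formal consequence of the universal property of profinite completion combined with the maximally quasi-\'etale assumption.
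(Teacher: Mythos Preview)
Your argument is correct and is the standard one: reduce to residual finiteness of the image via Malcev's theorem (using the adjoint embedding $\PGL \hookrightarrow \GL(r^2-1,\C)$ in case~(1)), then use the isomorphism of profinite completions to force any finite quotient of $\pi_1(X_{\reg})$ to factor through $\pi_1(X)$. The paper itself does not give a proof of this statement but simply cites \cite[Proof of Proposition 3.10]{GKP22}; your approach is exactly the argument found there. One minor point you left implicit is that $\pi_1(X_{\reg})$ is finitely generated (so that its image is finitely generated and Malcev applies), but this is automatic since $X_{\reg}$ is a complex algebraic variety and hence has the homotopy type of a finite CW complex.
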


\subsection{\texorpdfstring{$\Q$}{Q}-Chern classes}\label{Subsec-Chern}

In this subsection, following \cite{K++, GKPT19b}, we review $\mathbb{Q}$-Chern classes, which allow us to define intersection numbers for characteristic classes of reflexive sheaves.  

Let $X$ be a projective klt variety of dimension $n$, and let $\mathcal{E}$ and $\mathcal{F}$ be reflexive sheaves on $X$.  
As explained in \cite[Chapter~10]{K++} and \cite[Theorem~3.13]{GKPT19b}, the $\Q$-Chern classes 
$\widehat{c}_1(\mathcal{E})$, $\widehat{c}_1(\mathcal{E})\widehat{c}_1(\mathcal{F})$, and $\widehat{c}_2(\mathcal{E})$ 
can be defined as the symmetric $\Q$-multilinear forms satisfying the properties (P1) and (P2):
\begin{align*}
\widehat{c}_1(\mathcal{E})& \colon {\rm N}^{1}(X)_{\Q}^{\,n-1} \longrightarrow \Q, 
\quad (\alpha_1, \ldots, \alpha_{n-1}) \longmapsto \widehat{c}_1(\mathcal{E}) \alpha_1 \cdots \alpha_{n-1}, \\
\widehat{c}_1(\mathcal{E}) \widehat{c}_1(\mathcal{F})& \colon {\rm N}^{1}(X)_{\Q}^{\,n-2} \longrightarrow \Q, 
\quad (\alpha_1, \ldots, \alpha_{n-2}) \longmapsto \widehat{c}_1(\mathcal{E}) \widehat{c}_1(\mathcal{F}) \alpha_1 \cdots \alpha_{n-2}, \\
\widehat{c}_2(\mathcal{E})& \colon {\rm N}^{1}(X)_{\Q}^{\,n-2} \longrightarrow \Q, 
\quad (\alpha_1, \ldots, \alpha_{n-2}) \longmapsto \widehat{c}_2(\mathcal{E}) \alpha_1 \cdots \alpha_{n-2}. 
\end{align*}

\begin{itemize}
\item[(P1)] In the case $n=2$, the surface $X$ admits a (not necessarily quasi-\'etale) finite Galois cover $\nu \colon \widehat{X} \rightarrow X$ such that the reflexive pull-back $\nu^{[*]}\mathcal{E}$ is locally free, and 
$$
\deg \nu \cdot \left( \widehat{c}_{1}(\mathcal{E}) \cdot \alpha \right) 
= c_{1}\left(\nu^{[*]}\mathcal{E}\right) \cdot \nu^{*}\alpha,
$$
for any $\alpha \in {\rm N}^{1}(X)_{\Q}$.

\item[(P2)] In the case  $n>2$, for any general member $V \in \mathcal{B}$ of a free sub-linear system $\mathcal{B} \subset |L|$ of some line bundle $L$, we have
$$
\widehat{c}_1(\mathcal{E}) c_{1}(L) \alpha_2 \cdots \alpha_{n-1}
=
\widehat{c}_1(\mathcal{E}|_{V}) \alpha_2 \cdots \alpha_{n-1},
$$
for any $\alpha_i \in {\rm N}^{1}(X)_{\Q}$.  
Note that $V$ is a klt hypersurface in $X$ and $\mathcal{E}|_{V}$ is reflexive.
\end{itemize}
The same properties as (P1) and (P2) hold for $\widehat{c}_1(\mathcal{E}) \widehat{c}_1(\mathcal{F})$ and $\widehat{c}_2(\mathcal{E})$.  

In this paper, we need to treat intersection numbers with particular care.  
For this reason, we briefly review the definitions of intersection numbers for Weil divisors and torsion-free sheaves.

\begin{defn}

(1) (Weil divisors, divisorial sheaves, and determinant sheaves).  
Let $D$ be a Weil divisor on $X$, and let $\mathcal{F}:=\mathcal{O}_{X}(D)$ be the associated divisorial sheaf.  
We define $\widehat{c}_1(D):=\widehat{c}_1(\mathcal{F})$.  
By properties (P1) and (P2), when $D$ is $\mathbb{Q}$-Cartier, this definition coincides with the $\Q$-multilinear form 
$\frac{1}{m}c_{1}(mD)$ naturally defined by the line bundle $mD$, where $m \in \mathbb{Z}_{+}$ with $mD$ Cartier.  
Furthermore, for a reflexive sheaf $\mathcal{E}$, we have 
$\widehat{c}_1(\mathcal{E})=\widehat{c}_1(\det \mathcal{E})$, 
where $\det \mathcal{E}:=(\Lambda^{\rk \mathcal{E}} \mathcal{E})^{\vee \vee}$.  

\smallskip

(2) (Torsion-free sheaves).  
For a torsion-free sheaf $\mathcal{G}$ on $X$, we define $c_1(\mathcal{G}):=\widehat{c}_1(\mathcal{G}^{\vee \vee})$.  
Let $C$ be the curve defined by the complete intersection of general members of $|H_i|$, where each $H_i$ is a very ample Cartier divisor on $X$.  
Since the natural map $\mathcal{G} \to \mathcal{G}^{\vee \vee}$ is an isomorphism in codimension one, the restriction $\mathcal{G}|_{C}$ is locally free on $C$, and we have
$$
c_{1}(\mathcal{G})H_{1}\cdots H_{n-1} = c_{1}(\mathcal{G}|_{C}).
$$
\end{defn}
For convenience, we often write $c_1(\bullet)$ instead of $\widehat{c}_1(\bullet)$, and similarly 
$c_1(\bullet)c_1(\blacklozenge)$ instead of $\widehat{c}_1(\bullet)\widehat{c}_1(\blacklozenge)$.  
Furthermore, we use the following notation 
$$
D\alpha_1\cdots\alpha_{n-1} := \widehat{c}_1(D)\alpha_1\cdots\alpha_{n-1}.
$$
These conventions are adopted to simplify notation and to avoid confusion in view of the properties mentioned above.

We now generalize the Hodge index theorem to Weil divisors on projective klt varieties using $\mathbb{Q}$-Chern classes.  
The following proposition is perhaps already known to experts, 
but we include an explanation for completeness, since appropriate references seem to be lacking.

\begin{prop}[Hodge index theorem] \label{prop-hodge-index-type}

Let $X$ be a projective klt variety of dimension $n$.  
Let $A$ and $B$ be $\Q$-Weil divisors on $X$, and let $H_1, \ldots, H_{n-2}$ be ample $\Q$-Cartier divisors on $X$.  

\begin{itemize}
\item[$(1)$] If $A^2 \cdot H_1 \cdots H_{n-2} > 0$, then we have 
\begin{equation}
\label{eqs-Hodge-Index-Theorem}
(A^2 \cdot H_1 \cdots H_{n-2}) \cdot (B^2 \cdot H_1 \cdots H_{n-2}) 
\leq (A \cdot B \cdot H_1 \cdots H_{n-2})^2.
\end{equation}

\item[$(2)$] If
$$
B^2 \cdot H_1 \cdots H_{n-2} = A \cdot B \cdot H_1 \cdots H_{n-2} = 0,
$$
then we have 
\begin{equation}
\label{eqs-Hodge-Index-Theorem-nu=1}
A^2 \cdot H_1 \cdots H_{n-2} \leq 0.
\end{equation}

\item[$(3)$] In cases $(1)$ and $(2)$, if equality holds in \eqref{eqs-Hodge-Index-Theorem} or \eqref{eqs-Hodge-Index-Theorem-nu=1}, respectively, then there exists a rational number $\lambda \in \Q$ such that 
$$
B \cdot L \cdot H_1 \cdots H_{n-2} = \lambda \cdot (A \cdot L \cdot H_1 \cdots H_{n-2})
$$
holds for any $\Q$-Weil divisor $L$ on $X$. 
\end{itemize}

\end{prop}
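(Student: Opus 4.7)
The plan is to reduce the three statements to the classical Hodge index theorem on a smooth projective surface by iterated hyperplane sections followed by a resolution of singularities. First, I would use property (P2) of the $\mathbb{Q}$-Chern classes, together with its extension to the bilinear form $\widehat{c}_1(\mathcal{E})\widehat{c}_1(\mathcal{F})$ asserted immediately after (P2), to cut down successively by general members $V_i \in |m_i H_i|$ for $i = 1, \dots, n-2$, with each $m_i$ chosen large and divisible so that $m_i H_i$ is very ample. A Bertini-type theorem for klt pairs ensures that each $V_i$, and hence the complete intersection $V := V_1 \cap \cdots \cap V_{n-2}$, is again klt. Each application of (P2) replaces a factor $c_1(m_i H_i)$ by restriction to the hyperplane section, and after $n-2$ steps the three intersection numbers $A^2 \cdot H_1 \cdots H_{n-2}$, $A \cdot B \cdot H_1 \cdots H_{n-2}$, $B^2 \cdot H_1 \cdots H_{n-2}$ on $X$ become, up to a common positive scalar $\prod_i m_i$, the three intersections $(A|_V)^2$, $(A|_V) \cdot (B|_V)$, $(B|_V)^2$ on the klt surface $V$. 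Hence (1), (2), and (3) reduce simultaneously to the surface case.

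Next I would reduce the surface case to a smooth surface. Since two-dimensional klt singularities are quotient singularities, $V$ is $\mathbb{Q}$-factorial, so the Weil divisors $A|_V$ and $B|_V$ are $\mathbb{Q}$-Cartier. Choose any resolution $\pi \colon W \to V$; by the projection formula the pullbacks $\widetilde A := \pi^*(A|_V)$ and $\widetilde B := \pi^*(B|_V)$ preserve all three intersection numbers. On the smooth projective surface $W$, the intersection form on $N^1(W)_{\mathbb{R}}$ has signature $(1, \rho(W) - 1)$, and the classical Hodge index theorem applies. When $\widetilde A^2 > 0$, the plane $\mathrm{span}(\widetilde A, \widetilde B)$ contains a positive vector, so its restricted Gram matrix has signature at most $(1,1)$ and hence non-positive determinant, yielding (1). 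When $\widetilde B^2 = \widetilde A \cdot \widetilde B = 0$ and $\widetilde B \not\equiv 0$, the induced form on $\widetilde B^\perp / \langle \widetilde B \rangle$ is negative semi-definite, so $\widetilde A \in \widetilde B^\perp$ satisfies $\widetilde A^2 \leq 0$, yielding (2). In either equality case the Gram matrix of $\widetilde A$ and $\widetilde B$ has rank at most one, so $\widetilde A$ and $\widetilde B$ are numerically proportional on $W$.

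For part (3), writing $\widetilde B \equiv \lambda \widetilde A$ on $W$, I would apply property (P2) once more to any Weil divisor $L$ on $X$: restricting $L$ to $V$ and pulling back to $W$, both $B \cdot L \cdot H_1 \cdots H_{n-2}$ and $\lambda \cdot A \cdot L \cdot H_1 \cdots H_{n-2}$ are computed as the same number up to the factor $\prod_i m_i$, which cancels. The degenerate subcase $\widetilde B \equiv 0$ is handled by taking $\lambda = 0$. The main obstacle, in my view, is the careful bookkeeping in the cutting-down step: one must verify that (P2) applies uniformly to the bilinear form $\widehat{c}_1(A)\widehat{c}_1(B)$ of two Weil divisors and that the inductive restriction can be carried out simultaneously for all three intersection numbers on the same klt hypersurface at each stage, together with the Bertini statement for klt pairs. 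Once this is in place, the resolution step and the classical Hodge index theorem on $W$ are standard.
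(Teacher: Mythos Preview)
Your approach is essentially the same as the paper's: both reduce to a klt surface $S$ by taking a complete intersection of general members of $|m_i H_i|$, observe that $S$ is $\mathbb{Q}$-factorial so that $A|_S$ and $B|_S$ become $\mathbb{Q}$-Cartier, and then invoke the classical Hodge index theorem. The only cosmetic differences are that the paper cites \cite[Theorem~0.1]{Lan22a} to justify the equality $(A|_S)^2 = A^2 \cdot H_1 \cdots H_{n-2}$ in one stroke rather than iterating property~(P2), and it simply appeals to the standard Hodge index theorem on the $\mathbb{Q}$-factorial surface (via Remark~\ref{remin-Hodge-index-surfaces}) without writing out the resolution step you make explicit.
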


\begin{rem} \label{remin-Hodge-index-surfaces}
When $X$ is a smooth projective variety, this proposition follows directly from the Hodge index theorem (see, for example, \cite[Corollary~IV.2.15]{BPV84}).  
When both $A$ and $B$ are $\mathbb{Q}$-Cartier, the proposition can be proved by taking a resolution of singularities of $X$, even without assuming that $X$ has klt singularities.  
The main difference between this proposition and conventional formulations is its treatment of $\mathbb{Q}$-Weil divisors.
\end{rem}

\begin{proof}
We reduce the proof to the case where $X$ is a surface.  
The Weil divisors $A$ and $B$ are not necessarily $\Q$-Cartier, but this causes no difficulty thanks to \cite{Lan22a} and the assumption that $X$ has klt singularities.  

By replacing $H_i$ with $m_iH_i$ for $m_i \gg 1$, we may assume that each $H_i$ is a very ample Cartier divisor.  
Define the surface $S$ as the complete intersection 
$$
S := V_{1} \cap \cdots \cap V_{n-2},
$$
where each $V_i$ is a general member of $|H_i|$.  
Then, by \cite[Theorem~0.1]{Lan22a}, we obtain
$$
(A|_S)^2 = A^2 \cdot H_1 \cdots H_{n-2}. 
$$
Here, the intersection numbers are computed using $\Q$-Chern classes.  
The left-hand side $(A|_S)^2$ coincides with the usual intersection number of the $\Q$-Cartier divisor $A|_S$.  
Indeed, the surface $S$ has klt singularities and is therefore $\Q$-factorial, which implies that $A|_S$ is $\Q$-Cartier.  
The same argument applies to $A|_S \cdot B|_S$ and $(B|_S)^2$.  
Therefore, the desired conclusions follow from the standard Hodge index theorem (see Remark~\ref{remin-Hodge-index-surfaces}).
\end{proof}

\begin{lem}
\label{lem-Q-Cartier-of-Gi}
Let $X$ be a projective klt variety of dimension $n$. Let $\mathcal{L}$ be a reflexive sheaf of rank one  and $D$ be a  $\Q$-Cartier divisor on $X$ such that
$$\left(c_1(\mathcal{L}) - c_{1}( D)\right)   H_1\cdots  H_{n-1}= 0 \quad \text{and} \quad
\left(c_1(\mathcal{L}) - c_{1}(D) \right)^{2}   H_1\cdots  H_{n-2}= 0 \text{ holds  }
$$ 
for some ample $\Q$-Cartier divisors $H_{i}$ on $X$. 
Then, the sheaf $\mathcal{L}$ is a $\Q$-line bundle and satisfies $c_1(\mathcal{L}) = c_{1}(D)$.
\end{lem}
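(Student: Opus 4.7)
The plan is to set $\alpha := \widehat{c}_{1}(\mathcal{L}) - \widehat{c}_{1}(D)$ and proceed in three stages: apply the Hodge index theorem (Proposition \ref{prop-hodge-index-type}) to force the intersection $\alpha \cdot L \cdot H_{1}\cdots H_{n-2}$ to vanish for every $L \in N^{1}(X)_{\Q}$; cut down to a general complete-intersection surface to upgrade this to numerical triviality on that surface; and finally invoke the maximally quasi-\'etale hypothesis via Theorem \ref{thm-representation-quasi-etale} to conclude both the $\Q$-line bundle property of $\mathcal{L}$ and the global equality $\widehat{c}_{1}(\mathcal{L}) = \widehat{c}_{1}(D)$.

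For the first step, I would apply Proposition \ref{prop-hodge-index-type}(1) with $A = H_{n-1}$ and $B = \alpha$. Since $H_{n-1}^{2}\, H_{1}\cdots H_{n-2} > 0$ by ampleness and the first hypothesis gives $(\alpha \cdot H_{n-1}\cdot H_{1}\cdots H_{n-2})^{2} = 0$, the inequality \eqref{eqs-Hodge-Index-Theorem} combined with the second hypothesis $\alpha^{2}\, H_{1}\cdots H_{n-2} = 0$ forces equality. Proposition \ref{prop-hodge-index-type}(3) then provides $\lambda \in \Q$ with
\[
\alpha \cdot L \cdot H_{1}\cdots H_{n-2} \;=\; \lambda \cdot H_{n-1}\cdot L\cdot H_{1}\cdots H_{n-2}
\]
for every $L \in N^{1}(X)_{\Q}$. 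Evaluating at $L = H_{n-1}$ yields $\lambda = 0$, so $\alpha\cdot L\cdot H_{1}\cdots H_{n-2} = 0$ for every $L$. For the second step, after replacing each $H_{i}$ by a sufficient multiple I would cut $X$ to a general complete intersection surface $S := V_{1}\cap\cdots\cap V_{n-2}$ with very ample $V_{i} \in |H_{i}|$. Then $S$ is a projective klt surface, hence $\Q$-factorial, and property (P2) lets us view $\alpha|_{S}$ as a $\Q$-Cartier divisor class on $S$ satisfying $\alpha|_{S}\cdot H_{n-1}|_{S} = 0$ and $(\alpha|_{S})^{2} = 0$. The classical surface Hodge index theorem (Remark \ref{remin-Hodge-index-surfaces}) then forces $\alpha|_{S} \equiv 0$ numerically.

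For the third step, the first hypothesis gives that the reflexive rank one sheaf $\mathcal{M} := (\mathcal{L} \otimes \mathcal{O}_{X}(-D))^{\vee\vee}$ has degree zero on general complete-intersection curves, and combining this with the surface-level numerical triviality would realize $\mathcal{M}|_{X_{\reg}}$ as a unitary flat line bundle, corresponding via the standard Kobayashi-Hitchin-Simpson correspondence to a character $\chi \colon \pi_{1}(X_{\reg}) \to U(1)$. Theorem \ref{thm-representation-quasi-etale}, applicable under the maximally quasi-\'etale hypothesis, would then ensure that $\chi$ factors through $\pi_{1}(X)$, so $\mathcal{M}$ extends to a genuine Cartier line bundle on $X$; this shows simultaneously that $\mathcal{L}$ is a $\Q$-line bundle and that $\widehat{c}_{1}(\mathcal{L}) = \widehat{c}_{1}(D)$. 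The main obstacle I anticipate is this final step: while the Hodge index computations are direct applications of Proposition \ref{prop-hodge-index-type}, the passage from surface-level numerical triviality to a unitary flat structure on $X_{\reg}$ and the subsequent invocation of Theorem \ref{thm-representation-quasi-etale} require delicate handling of the reflexive hull and its interaction with the \'etale fundamental group.
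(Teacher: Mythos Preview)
Your approach is correct in outline but takes a considerably longer route than the paper, and the detour does not actually buy you anything. The paper's proof is a two-line application of \cite[Theorem 1.4]{LT18}: after clearing denominators so that $mD$ is Cartier, the reflexive rank-one sheaf $\mathcal{M} := \mathcal{L}^{[\otimes m]} \otimes \mathcal{O}_X(-mD)$ is automatically stable and has $\widehat{c}_2(\mathcal{M}) = 0$, while the two hypotheses of the lemma give $c_1(\mathcal{M})\cdot H_1\cdots H_{n-1} = 0$ and $c_1(\mathcal{M})^2\cdot H_1\cdots H_{n-2} = 0$. These are precisely the inputs to \cite[Theorem 1.4]{LT18}, which then says $\mathcal{M}$ is a flat invertible sheaf, and the conclusion follows.

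Your Steps 1 and 2 (the Hodge index arguments and the restriction to a general surface) are valid but entirely unnecessary: the paper never invokes Proposition \ref{prop-hodge-index-type} here, because \cite[Theorem 1.4]{LT18} only needs the two intersection numbers that are already in the hypotheses. Your Step 3 is where the substance lies, and you are right to flag it as the main obstacle. However, the phrase ``standard Kobayashi--Hitchin--Simpson correspondence'' is problematic on the non-compact $X_{\reg}$; what one actually needs is the nonabelian Hodge correspondence for klt varieties together with the extension across the singular locus via the maximally quasi-\'etale hypothesis, and this is exactly the content packaged in \cite[Theorem 1.4]{LT18}. So rather than rebuilding that machinery, you should simply cite it, as the paper does.
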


\begin{proof}
Let $\nu \colon \widehat{X} \rightarrow X$ be a maximally quasi-\'etale Galois cover with Galois group $G$, and let $m \in \mathbb{Z}_{>0}$ be a positive integer such that $mD$ is Cartier.  
The sheaf 
$$
\mathcal{M}':= \nu^{[*]}(\mathcal{L}^{[\otimes m]} \otimes \mathcal{O}_X(-mD))
$$ 
is reflexive and satisfies 
$$ 
c_1(\mathcal{M}') \cdot H_1 \cdots H_{n-1} = 0 
\quad \text{and} \quad 
c_1(\mathcal{M}')^{2} \cdot H_1 \cdots H_{n-2} = 0 
$$ 
by assumption.  
Furthermore, since $\mathcal{M}'$ has rank one, it is stable and satisfies $\widehat{c}_2(\mathcal{M}') \cdot H_1 \cdots H_{n-2} = 0$.  
Hence, by \cite[Theorem~1.4]{LT18}, we deduce that $\mathcal{M}'$ is a flat invertible sheaf.  

Replacing $m$ by a sufficiently divisible multiple, if necessary, we may assume that the natural action of $G$ on $\mathcal{M}'|_x$ is trivial for every $x \in \widehat{X}$. 
Consequently, there exists a flat line bundle $\mathcal{M}$ on $X$ such that $\mathcal{M}' = \nu^*\mathcal{M}$ (see \cite[Theorem~6.8]{Dre04}).  
This shows that $\mathcal{L}$ is a $\Q$-line bundle with $c_1(\mathcal{L}) = c_{1}(D)$.
\end{proof}

The following proposition is elementary, but keeping it in mind is often useful. 
The proof is straightforward, and thus we omit it.

\begin{prop} \label{prop-vanishing-intersection-form}
Let $X$ be a projective variety of dimension $n$, and let $0\leq k \leq n$ be an integer. Let $\alpha\colon {\rm N}^{1}(X)_{\Q}^{n-k} \to \Q$ be a symmetric $\Q$-multilinear form such that 
$$
\alpha  \cdot H_1 \cdots H_{k} \geq 0 
$$ 
holds for any ample $\Q$-divisors $H_i$ on $X$. Then, the following conditions are equivalent$:$

\begin{itemize}
\item[$\bullet$] $\alpha \cdot H_1 \cdots H_{k} = 0$ for some ample $\Q$-divisors $H_1, \ldots, H_{k}$. 
\item[$\bullet$] $\alpha \cdot H_1 \cdots H_{k} = 0$ for all ample $\Q$-divisors $H_1, \ldots, H_{k}$. 
\item[$\bullet$] $\alpha \cdot H_1 \cdots H_{k} = 0$ for all nef $\Q$-divisors $H_1, \ldots, H_{k}$.
\end{itemize}
\end{prop}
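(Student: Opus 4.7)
The implications (third) $\Rightarrow$ (second) $\Rightarrow$ (first) are trivial, so the plan is to prove (first) $\Rightarrow$ (second) $\Rightarrow$ (third).

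For (first) $\Rightarrow$ (second), suppose $\alpha \cdot H_1 \cdots H_k = 0$ for some fixed ample $\mathbb{Q}$-divisors $H_1, \dots, H_k$, and let $A_1, \dots, A_k$ be another tuple of ample $\mathbb{Q}$-divisors. The idea is to replace the $H_i$ by $A_i$ one slot at a time. Concretely, consider the affine function
\[
f(t) := \alpha \cdot \bigl(H_1 + t(A_1 - H_1)\bigr) \cdot H_2 \cdots H_k.
\]
By openness of the ample cone, the divisor $H_1 + t(A_1 - H_1)$ remains ample for $t$ in a small open neighborhood of $[0,1]$, so the nonnegativity hypothesis on $\alpha$ gives $f(t) \geq 0$ there. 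Since $f$ is linear in $t$ and vanishes at $t=0$, it must vanish identically, which yields $\alpha \cdot A_1 \cdot H_2 \cdots H_k = 0$. Iterating this argument in the remaining slots (using that the hypothesis $\alpha \cdot (\cdot) \geq 0$ applies to any tuple of ample classes) gives $\alpha \cdot A_1 \cdots A_k = 0$.

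For (second) $\Rightarrow$ (third), let $N_1, \dots, N_k$ be nef $\mathbb{Q}$-divisors and fix an ample $\mathbb{Q}$-divisor $A$. Then $N_i + \varepsilon A$ is ample for every $\varepsilon \in \mathbb{Q}_{>0}$, so by (second),
\[
\alpha \cdot (N_1 + \varepsilon A) \cdots (N_k + \varepsilon A) = 0.
\]
Expanding the left-hand side by multilinearity gives a polynomial in $\varepsilon$ which is identically zero on $\mathbb{Q}_{>0}$, hence identically zero. Reading off the constant term yields $\alpha \cdot N_1 \cdots N_k = 0$, as desired.

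The only mildly nontrivial step is (first) $\Rightarrow$ (second), where it is essential that the nonnegativity hypothesis supplies the extra $\geq 0$ bound needed to force linear vanishing on a neighborhood; the rest is purely formal manipulation with multilinearity and the openness of the ample cone.
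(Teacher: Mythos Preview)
Your proof is correct and is precisely the standard argument one expects here; the paper itself omits the proof entirely, remarking only that it is ``straightforward,'' so there is nothing to compare against beyond noting that your linear-interpolation trick in each slot and the limiting argument for nef divisors constitute exactly the routine verification the authors had in mind.
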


\subsection{Higgs sheaves}\label{subsec-higgs}

In this subsection, we establish some preliminary results on Higgs sheaves.  
We begin by recalling the definition and basic properties of Higgs sheaves.

\begin{defn}[{cf.\,\cite[Sections 4 and 5]{GKPT19b}}]
A {\textit{Higgs sheaf}} $(\mathcal{H}, \theta)$ on a normal projective variety $X$ 
is a pair consisting of a reflexive sheaf $\mathcal{H}$ and 
an $\mathcal{O}_{X}$-linear sheaf morphism $\theta \colon \mathcal{H} \to \mathcal{H} \otimes \Omega_{X}^{[1]}$,
called \emph{Higgs field},
such that the induced morphism 
$$\theta \wedge \theta \colon \mathcal{H} \to \mathcal{H} \otimes \Omega_{X}^{[2]}$$ vanishes. 
\begin{itemize}
\item[(1)]  A subsheaf $\mathcal{S} \subset \mathcal{H}$ is said to be \textit{generically $\theta$-invariant} 
if $\theta(\mathcal{S}|_{X_{\reg}})$ is contained in the image of the natural map $ \mathcal{S} \otimes \Omega_{X}^{[1]}  \to \mathcal{H} \otimes \Omega_{X}^{[1]}$ on $X_{\reg}$. 

\item[(2)] Let $L_1, \ldots,  L_{n-1}$ be nef $\Q$-Cartier divisors on $X$. The \emph{slope} of $(\mathcal{H}, \theta)$ with respect to $(L_1 \cdots L_{n-1})$ is defined by 
$$\mu_{L_1 \cdots L_{n-1}}(\mathcal{H}) := \frac{c_1(\mathcal{H}) \cdot L_1 \cdots L_{n-1}}{\rk \mathcal{H}}.$$
\item[(3)] The Higgs sheaf  $(\mathcal{H}, \theta)$ is said to be \textit{$(L_1 \cdots L_{n-1})$-semistable} (resp.\,\textit{$(L_1 \cdots L_{n-1})$-stable})
if any  generically $\theta$-invariant reflexive subsheaf $0  \subsetneq \mathcal{S} \subsetneq \mathcal{H}$ 
satisfies 
$$
\text{$\mu_{L_1 \cdots L_{n-1}}(\mathcal{S}) \le \mu_{L_1 \cdots L_{n-1}}(\mathcal{H})$ 
(resp.\,$\mu_{L_1 \cdots L_{n-1}}(\mathcal{S}) < \mu_{L_1 \cdots L_{n-1}}(\mathcal{H})$).}
$$ 
\end{itemize}
\end{defn}

\begin{prop}
\label{prop-Higgs-1}
Let $X$ be a projective klt variety of dimension $n$ and $L_1, \ldots,  L_{n-1}$ be nef $\Q$-Cartier divisors. 
For a reflexive subsheaf  $\mathcal{E} \subset \Omega_{X}^{[1]}$, 
we define the Higgs sheaf $(\mathcal{H}, \theta)$  by 
$\mathcal{H} := \mathcal{E} \oplus \mathcal{O}_{X}$ and 
$$
\begin{array}{cccc}
\theta \colon  & \mathcal{H} = \mathcal{E} \oplus \mathcal{O}_{X} &\longrightarrow  
&\mathcal{H} \otimes \Omega_{X}^{[1]} = (\mathcal{E} \oplus \mathcal{O}_{X}) \otimes \Omega_{X}^{[1]} \\
	   & (a,b) &\longmapsto& (0,a).
\end{array}
$$
If $\mathcal{E}$ is a $(L_1 \cdots L_{n-1})$-semistable sheaf with $\mu_{L_1 \cdots L_{n-1}}(\mathcal{E})>0$, 
then the Higgs sheaf $(\mathcal{H}, \theta)$ is $(L_1 \cdots L_{n-1})$-stable.
\end{prop}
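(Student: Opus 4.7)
The plan is to verify the slope inequality $\mu(\mathcal{S}) < \mu(\mathcal{H})$ for every nonzero generically $\theta$-invariant reflexive subsheaf $\mathcal{S} \subsetneq \mathcal{H}$, where throughout I write $\mu := \mu_{L_1 \cdots L_{n-1}}$. Since $c_1(\mathcal{O}_X) = 0$, a direct computation yields $\mu(\mathcal{H}) = \tfrac{\rk \mathcal{E}}{\rk \mathcal{E}+1}\mu(\mathcal{E}) > 0$ by the positivity hypothesis on $\mathcal{E}$.

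I would first decompose $\mathcal{S}$ using the projection $\pi_1 \colon \mathcal{H} \to \mathcal{E}$: put $\mathcal{S}_0 := \ker(\pi_1|_{\mathcal{S}}) \subseteq \mathcal{O}_X$ and $\mathcal{S}' := \pi_1(\mathcal{S}) \subseteq \mathcal{E}$, giving the exact sequence $0 \to \mathcal{S}_0 \to \mathcal{S} \to \mathcal{S}' \to 0$. If $\mathcal{S}' = 0$, then $\mathcal{S}$ is a rank-one subsheaf of $\mathcal{O}_X$ with $c_1(\mathcal{S}) \cdot L_1 \cdots L_{n-1} \leq 0$, so $\mu(\mathcal{S}) \leq 0 < \mu(\mathcal{H})$.

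Assume instead $\mathcal{S}' \neq 0$. The crucial step is to use the $\theta$-invariance to force $\mathcal{S}_0 = \mathcal{O}_X$. Taking a section $(a,b)$ of $\mathcal{S}$ at the generic point $\eta$ with $a \neq 0$, the formula $\theta(a,b) = (0,1) \otimes a$ places this element in the $K(X)$-vector space $(\mathcal{S} \otimes \Omega_X^{[1]})_{\eta}$. Contracting the second tensor factor against a $K(X)$-linear functional on $(\Omega_X^{[1]})_{\eta}$ which does not vanish at $a$ extracts $(0,1) \in \mathcal{S}_{\eta}$, so the reflexive hull satisfies $\mathcal{S}_0^{\vee\vee} = \mathcal{O}_X$. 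Since $\mathcal{S}_0 = \ker(\pi_1|_{\mathcal{S}})$ is the kernel of a morphism between reflexive sheaves on a normal variety, it is itself reflexive (by the depth lemma), hence coincides with its double dual $\mathcal{O}_X$. It follows that $\mathcal{S} = \mathcal{S}' \oplus \mathcal{O}_X$ as subsheaves of $\mathcal{H}$, with $\mathcal{S}' \subsetneq \mathcal{E}$ reflexive and proper.

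The final slope comparison then reduces to an inequality inside $\mathcal{E}$. Semistability of $\mathcal{E}$ gives $c_1(\mathcal{S}') \cdot L_1 \cdots L_{n-1} \leq \rk(\mathcal{S}') \mu(\mathcal{E})$, so combined with the strict monotonicity of $s \mapsto s/(s+1)$ and $\mu(\mathcal{E}) > 0$ one obtains
\begin{equation*}
\mu(\mathcal{S}) \;=\; \frac{c_1(\mathcal{S}') \cdot L_1 \cdots L_{n-1}}{\rk \mathcal{S}' + 1} \;<\; \frac{c_1(\mathcal{E}) \cdot L_1 \cdots L_{n-1}}{\rk \mathcal{E} + 1} \;=\; \mu(\mathcal{H})
\end{equation*}
whenever $\rk \mathcal{S}' < \rk \mathcal{E}$; the borderline full-rank case is handled by noting that the reflexivity of $\mathcal{S}' \subsetneq \mathcal{E}$ forces a nonzero effective divisor $D$ with $c_1(\mathcal{E}) - c_1(\mathcal{S}') = [D]$, from which the required strict inequality follows. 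The main obstacle is the $\theta$-invariance plus reflexivity identification $\mathcal{S}_0 = \mathcal{O}_X$ that drives the splitting in the third paragraph; once this is in place, the rest is essentially slope bookkeeping.
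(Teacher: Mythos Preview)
Your approach and the paper's are dual: the paper projects $\mathcal{S}$ onto $\mathcal{O}_X$ via $\pr_2$ and shows that $\ker(\pr_2|_{\mathcal{S}}) \subseteq \mathcal{E}$ violates the semistability of $\mathcal{E}$, whereas you project via $\pr_1$ and try to compute $\mu(\mathcal{S})$ directly from the sequence $0 \to \mathcal{S}_0 \to \mathcal{S} \to \mathcal{S}' \to 0$. Both routes use $\theta$-invariance in exactly the same way, namely to exhibit $(0,1)\in \mathcal{S}_\eta$ (equivalently, to show that $\pr_2|_{\mathcal{S}}$ is generically nonzero).

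There is, however, a genuine gap in your third paragraph. From $(0,1)\in \mathcal{S}_\eta$ you only obtain $\rk \mathcal{S}_0 = 1$; the assertion $\mathcal{S}_0^{\vee\vee} = \mathcal{O}_X$ does not follow. Even granting that $\mathcal{S}_0$ is reflexive, a reflexive rank-one subsheaf of $\mathcal{O}_X$ on a normal variety is of the form $\mathcal{O}_X(-D)$ for some effective Weil divisor $D$, and nothing rules out $D\neq 0$. Hence the splitting $\mathcal{S} = \mathcal{S}' \oplus \mathcal{O}_X$ (and the reflexivity of $\mathcal{S}'$) is unjustified. This is easily repaired: you only need $\rk \mathcal{S}_0 = 1$ together with $c_1(\mathcal{S}_0)\cdot L_1\cdots L_{n-1}\le 0$, which already yields $\mu(\mathcal{S}) \le \tfrac{\rk \mathcal{S}'}{\rk \mathcal{S}'+1}\,\mu(\mathcal{E})$ and lets your final display go through without any splitting. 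Your treatment of the full-rank case is also incomplete: even if $c_1(\mathcal{E})-c_1(\mathcal{S}')$ is represented by a nonzero effective divisor, its intersection with the merely \emph{nef} tuple $L_1\cdots L_{n-1}$ can vanish, so the strict inequality need not follow. (The paper's argument writes $\tfrac{rl}{(r+1)(l-1)}>1$ with $l=\rk\mathcal{S}$, tacitly taking $l\le r=\rk\mathcal{E}$; under the standard convention that stability is tested against subsheaves of strictly smaller rank this is enough, and you may simply adopt the same convention.)
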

\begin{proof}
Set $\alpha := L_{1} \cdots L_{n-1} \in {\rm N}^{1}(X)_{\Q}^{n-1}$.  
Suppose that $(\mathcal{H}, \theta)$ is not $\alpha$-stable.  
Then, there exists a non-trivial generically $\theta$-invariant subsheaf $\mathcal{S} \subset \mathcal{H}$ 
with $\mu_{\alpha}(\mathcal{S}) \ge \mu_{\alpha}(\mathcal{H})$.  
Set $r := \rk \mathcal{E}$ and $l := \rk \mathcal{S}$.  
By definition, we have 
$$
\mu_{\alpha}(\mathcal{S}) \ge \mu_{\alpha}(\mathcal{H}) 
= \frac{r}{r+1}\, \mu_{\alpha}(\mathcal{E}) > 0.
$$ 
Consider the sheaf morphism 
\begin{equation*}
\gamma \colon \mathcal{S} \subset \mathcal{H} = \mathcal{E} \oplus \mathcal{O}_{X} 
   \xrightarrow{\ \ \pr_{2}\ \ } \mathcal{O}_X
\end{equation*}
induced by the second projection.  
Note that $\gamma \colon \mathcal{S} \to \mathcal{O}_X$ is not the zero map, since $\mathcal{S}$ is generically $\theta$-invariant.  

We claim that $\Ker \gamma$ is a destabilizing subsheaf of $\mathcal{E}$ with respect to $\alpha$.  
First, we confirm that $\Ker \gamma$ is a non-zero subsheaf of $\mathcal{H}$.  
Indeed, if $\gamma$ were injective, then $\mathcal{S} \cong \Image \gamma \subset \mathcal{O}_X$ would be an ideal sheaf of $\mathcal{O}_X$.  
This would imply $0 \geq \mu_{\alpha}(\Image \gamma) = \mu_{\alpha}(\mathcal{S})$, which is a contradiction.  

We now consider the exact sequence 
$$
0 \to \Ker \gamma \to \mathcal{S} \to \Image \gamma \to 0. 
$$
Since $\Ker \gamma \subset \mathcal{S}$ and $\Image \gamma \subset \mathcal{O}_X$, 
both sheaves are torsion-free.  
It follows that 
\begin{align}\label{eq-slope}
c_{1}(\mathcal{S}) \cdot \alpha 
= c_{1}(\Ker \gamma) \cdot \alpha + c_{1}(\Image \gamma) \cdot \alpha 
\leq c_{1}(\Ker \gamma) \cdot \alpha. 
\end{align} 
By noting that $\rk(\Ker \gamma) = l - 1$ and using \eqref{eq-slope}, we obtain 
$$
\mu_{\alpha}(\Ker \gamma) 
= \frac{1}{l - 1}\, c_{1}(\Ker \gamma) \cdot \alpha
\ge \frac{l}{l - 1}\, \mu_{\alpha}(\mathcal{S})
\ge \frac{rl}{(r + 1)(l - 1)}\, \mu_{\alpha}(\mathcal{E}).
$$
By noting $\frac{rl}{(r + 1)(l - 1)} > 1$ holds, 
we obtain  $\mu_{\alpha}(\Ker \gamma) > \mu_{\alpha}(\mathcal{E})$, 
contradicting the $\alpha$-semistability of $\mathcal{E}$.
\end{proof}

\subsection{Numerically projectively flat sheaves}\label{subsec-flat}

In this section, we investigate numerically projectively flat reflexive sheaves, which play a crucial role in studying the singularities of varieties. We first review the definition of numerically projectively flat sheaves and the Bogomolov-Gieseker inequality.

\begin{defn}
Let $X$ be a projective klt variety of dimension $n$, and let $\mathcal{E}$ be a reflexive sheaf of rank $r$ on $X$. The sheaf $\mathcal{E}$ is said to be \textit{numerically projectively flat} if there exist ample $\mathbb{Q}$-Cartier divisors $H_1, \ldots, H_{n-1}$ such that $\mathcal{E}$ is $(H_1 \cdots H_{n-1})$-semistable and satisfies the Bogomolov-Gieseker equality
$$
\widehat{\Delta}(\mathcal{E}) H_1 \cdots H_{n-2} = 0, 
$$
where
$$
\widehat{\Delta}(\mathcal{E}) := 2r\, \widehat{c}_2(\mathcal{E}) - (r -1)\, \widehat{c}_{1}(\mathcal{E})^{2}.
$$
\end{defn}

\begin{prop}[cf.\,\cite{Lan22b}]
\label{prop-Higgs-2}
Let $X$ be a projective klt variety of dimension $n$. 
Let $(\mathcal{H}, \theta)$ be a reflexive Higgs sheaf on $X$, and let $H_1, \ldots, H_{n-1}$ be ample $\Q$-Cartier divisors on $X$. If the Higgs sheaf $(\mathcal{H}, \theta)$ is $(H_1 \cdots H_{n-1})$-semistable, then the Bogomolov-Gieseker inequality  
\begin{equation}
\label{eq-inequality-BG-Higgs}
\widehat{\Delta}(\mathcal{H}) H_1\cdots H_{n-2} \ge 0
\end{equation}
holds. 
Moreover, if $X$ is maximally quasi-\'etale and equality holds in \eqref{eq-inequality-BG-Higgs}, then $\End(\mathcal{H})$ is locally free.
\end{prop}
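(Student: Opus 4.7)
The plan is to reduce the statement to the case of klt surfaces by cutting down with very ample divisors, and then invoke Langer's Bogomolov--Gieseker inequality for semistable Higgs sheaves on klt surfaces \cite{Lan22b}. After replacing each $H_i$ by a sufficiently large multiple, I would choose general members $V_i \in |H_i|$ for $i=1,\ldots,n-2$ and form the klt complete-intersection surface $S := V_1\cap\cdots\cap V_{n-2}$. By properties (P1) and (P2) of the $\Q$-Chern classes, the intersection number $\widehat{\Delta}(\mathcal{H}) H_1\cdots H_{n-2}$ computes exactly the surface discriminant $\widehat{\Delta}(\mathcal{H}|_S)$. The Higgs field $\theta$ restricts naturally to a Higgs field on $\mathcal{H}|_S$ via the conormal surjection $\Omega_X^{[1]}|_S \to \Omega_S^{[1]}$, and a Mehta--Ramanathan type restriction theorem for Higgs sheaves on klt varieties (again due to Langer) ensures that $(\mathcal{H}|_S, \theta|_S)$ remains semistable with respect to $H_{n-1}|_S$ once the multiples are chosen appropriately.

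Applying the Bogomolov--Gieseker inequality for semistable Higgs sheaves on klt surfaces \cite{Lan22b} then yields $\widehat{\Delta}(\mathcal{H}|_S) \geq 0$, giving \eqref{eq-inequality-BG-Higgs}.

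For the ``moreover'' part, assume equality holds and that $X$ is maximally quasi-\'etale. Then on a general surface section $S$ one has $\widehat{\Delta}(\mathcal{H}|_S)=0$, so by the equality case of \cite{Lan22b} the restriction $(\mathcal{H}|_{S_{\reg}}, \theta|_{S_{\reg}})$ is polystable with vanishing discriminant and therefore, through non-abelian Hodge on the smooth locus, corresponds to a semisimple projective representation $\rho_0\colon \pi_1(S_{\reg}) \to \PGL$; in particular $\End(\mathcal{H}|_{S_{\reg}})$ arises as the flat endomorphism bundle of a $\PGL$-local system. A Lefschetz-type theorem for general complete intersections avoiding the singular locus identifies this representation with one of $\pi_1(X_{\reg})$, and Theorem \ref{thm-representation-quasi-etale} together with the maximally quasi-\'etale hypothesis then allows it to factor through $\pi_1(X)$. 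The resulting flat projective bundle defined on all of $X$ coincides with $\mathbb{P}(\mathcal{H})$ in codimension one, and since $\End(\mathcal{H})$ is reflexive and agrees with the (locally free) endomorphism bundle of this global flat projective bundle, one concludes that $\End(\mathcal{H})$ is locally free on $X$.

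The main obstacle will be the equality case rather than the inequality itself: while the first part is essentially a clean descent to the klt-surface situation covered by \cite{Lan22b}, obtaining local freeness of $\End(\mathcal{H})$ across the singular locus of $X$ requires carefully combining the surface-level non-abelian Hodge output with the maximally quasi-\'etale assumption through Theorem \ref{thm-representation-quasi-etale}, and checking that the projective flat structure produced on the smooth locus extends to a genuine global projective bundle whose associated endomorphism sheaf matches the reflexive hull $\End(\mathcal{H})$.
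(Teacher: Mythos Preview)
Your argument for the inequality is sound, though the paper bypasses the surface reduction entirely: \cite[Theorem 7.6]{Lan22b} already states the Bogomolov--Gieseker inequality for semistable Higgs sheaves on klt varieties of arbitrary dimension, so no restriction step is needed.

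The equality case is where your approach diverges substantially from the paper's, and where there are genuine gaps. The paper's key move is to pass immediately to $\mathcal{G} := \End(\mathcal{H})$, which inherits a Higgs field and is again semistable with $\widehat{\Delta}(\mathcal{G})\cdot H_1\cdots H_{n-2}=0$ by \cite[Lemma 3.18]{GKPT19b}. Since $c_1(\mathcal{G})=0$, one has $\widehat{c}_2(\mathcal{G})\cdot H_1\cdots H_{n-2}=0$, and then \cite[Theorem 7.12]{Lan23} (applied directly on $X$, not on a surface cut) shows that $\mathcal{G}|_{X_{\reg}}$ is locally free. The nonabelian Hodge correspondence of \cite[Theorem 6.6]{GKPT19a} then produces a \emph{linear} representation $\pi_1(X_{\reg})\to\GL(\rk\mathcal{G},\C)$, which extends to $\pi_1(X)$ by Theorem \ref{thm-representation-quasi-etale}, and the associated locally free Higgs sheaf on $X$ agrees with $\mathcal{G}$ by reflexivity.

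Your route via $\mathcal{H}$ itself and projective representations has several soft spots. First, invoking nonabelian Hodge on the noncompact surface $S_{\reg}$ is delicate: without control on the behavior at the punctures you would need parabolic structures or tameness conditions, and you have not established that $\mathcal{H}|_{S_{\reg}}$ is even locally free. Second, the Lefschetz step from $\pi_1(S_{\reg})$ (or rather $\pi_1(S\cap X_{\reg})$) to $\pi_1(X_{\reg})$ is plausible but requires a careful citation, and you must check that the representation actually descends, not just that the map on $\pi_1$ is surjective. Third, and most seriously, your final identification ``the flat projective bundle coincides with $\mathbb{P}(\mathcal{H})$ in codimension one, hence $\End(\mathcal{H})$ is locally free'' presupposes that $\mathbb{P}(\mathcal{H})$ is well-defined on $X_{\reg}$, i.e.\ that $\mathcal{H}|_{X_{\reg}}$ is already locally free---which is precisely what you are trying to prove. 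The paper's trick of replacing $\mathcal{H}$ by $\End(\mathcal{H})$ sidesteps all of this: the vanishing first Chern class turns the projective story into a linear one, and the higher-dimensional result \cite{Lan23} lets you avoid surface cuts altogether.
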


\begin{proof}
The Bogomolov-Gieseker inequality was proved in \cite[Theorem~7.6]{Lan22b}.  
Therefore, it suffices to prove the latter conclusion.  
Set $\mathcal{G} := \End(\mathcal{H})$, which is a reflexive Higgs sheaf equipped with the Higgs field $\theta_{\mathcal{G}}$ naturally induced by $\theta$.  
By \cite[Lemma~3.18]{GKPT19b}, the Higgs sheaf $(\mathcal{G}, \theta_{\mathcal{G}})$ is $(H_1 \cdots H_{n-1})$-semistable and satisfies equality in \eqref{eq-inequality-BG-Higgs}.  
Thus, since $c_1(\mathcal{G}) = 0$, we obtain
\begin{equation*}
\widehat{c}_2(\mathcal{G}) \cdot H_1 \cdots H_{n-2} = 0.
\end{equation*}

By \cite[Theorem~7.12]{Lan23}, it follows that $\mathcal{G}|_{X_{\reg}}$ is locally free and that $(\mathcal{G}, \theta_{\mathcal{G}})$ is semistable with respect to any polarization.  
Here, we used that Higgs fields correspond to integrable $\lambda$-connections in the case $\lambda=0$.  
By \cite[Theorem~6.6]{GKPT19a}, the Higgs bundle $(\mathcal{G}, \theta_{\mathcal{G}})|_{X_{\reg}}$ is induced from a linear representation 
\[
\rho_{0}\colon \pi_{1}(X_{\reg}) \to \GL(\rk \mathcal{G}, \C)
\]
via the nonabelian Hodge correspondence.  
This representation is extended to 
\[
\rho\colon \pi_{1}(X) \to \GL(\rk \mathcal{G}, \C)
\]
by Theorem~\ref{thm-representation-quasi-etale}.  
Applying the nonabelian Hodge correspondence once more, we obtain a locally free Higgs sheaf on $X$ corresponding to $\rho$.  
By reflexivity, this locally free Higgs sheaf on $X$ coincides with $(\mathcal{G}, \theta_{\mathcal{G}})$.
\end{proof}

\begin{cor}\label{cor-locfree}
Let $X$ be a maximally quasi-\'etale projective klt variety of dimension $n$, and let $\mathcal{E} \subset \Omega_{X}^{[1]}$ be a reflexive subsheaf of rank $r$ of the cotangent sheaf $\Omega_{X}^{[1]}$. Let $D$ be a nef $\Q$-Cartier divisor and $H_{1}, \ldots, H_{n-2}$ be ample $\Q$-Cartier divisors on $X$. Assume that $\mathcal{E}$ is $(D \cdot H_{1} \cdots H_{n-2})$-semistable, that $\mu_{D \cdot H_{1} \cdots H_{n-2}}(\mathcal{E} )>0$, and that
$$
\left( 2(r +1)\widehat{c}_2(\mathcal{E}) - r\, \widehat{c}_1(\mathcal{E})^2 \right) H_{1}\cdots H_{n-2}=0.
$$
Then, the sheaf $\mathcal{E}$ is locally free on $X$. 
\end{cor}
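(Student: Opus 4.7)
The plan is to apply the Higgs-sheaf Bogomolov-Gieseker machinery of Propositions \ref{prop-Higgs-1} and \ref{prop-Higgs-2} to the tautological Higgs sheaf attached to $\mathcal{E}$. Concretely, I set $\mathcal{H} := \mathcal{E} \oplus \mathcal{O}_{X}$ and equip it with the Higgs field $\theta(a,b) := (0, a)$ as in Proposition \ref{prop-Higgs-1}. Since $c_{1}(\mathcal{O}_{X}) = 0$, a direct computation gives $\widehat{c}_{1}(\mathcal{H}) = \widehat{c}_{1}(\mathcal{E})$ and $\widehat{c}_{2}(\mathcal{H}) = \widehat{c}_{2}(\mathcal{E})$, so that
\[
\widehat{\Delta}(\mathcal{H}) = 2(r+1)\,\widehat{c}_{2}(\mathcal{E}) - r\,\widehat{c}_{1}(\mathcal{E})^{2}.
\]
In particular, the hypothesis of the corollary translates into the Bogomolov-Gieseker equality $\widehat{\Delta}(\mathcal{H}) \cdot H_{1} \cdots H_{n-2} = 0$.

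Next, since $\mathcal{E}$ is $(D \cdot H_{1} \cdots H_{n-2})$-semistable of positive slope and the divisors $D, H_{1}, \ldots, H_{n-2}$ are all nef, Proposition \ref{prop-Higgs-1} ensures that $(\mathcal{H}, \theta)$ is $(D \cdot H_{1} \cdots H_{n-2})$-stable. Combining stability with the Bogomolov-Gieseker equality above, I would then invoke the conclusion of Proposition \ref{prop-Higgs-2} to deduce that $\End(\mathcal{H})$ is locally free on $X$.

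The one technical point here is that Proposition \ref{prop-Higgs-2} is formulated for an ample multi-polarization, whereas our polarization contains the merely nef divisor $D$. However, the underlying results of Langer \cite[Theorem 7.6]{Lan22b} and \cite[Theorem 7.12]{Lan23} hold for multi-polarizations with nef factors, and the remaining steps in the proof of Proposition \ref{prop-Higgs-2} (restriction to $X_{\reg}$, the nonabelian Hodge correspondence on the smooth locus, and extension of the resulting representation of $\pi_{1}(X_{\reg})$ to $\pi_{1}(X)$ via Theorem \ref{thm-representation-quasi-etale}) use only the maximally quasi-\'etale hypothesis and not any special property of the ample cone. Thus the proof of Proposition \ref{prop-Higgs-2} carries over to the present setting and yields local freeness of $\End(\mathcal{H})$. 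This nef-versus-ample verification is the main obstacle in the argument.

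Finally, under the natural decomposition
\[
\End(\mathcal{H}) \cong \End(\mathcal{E}) \oplus \mathcal{E} \oplus \mathcal{E}^{\vee} \oplus \mathcal{O}_{X},
\]
the sheaf $\mathcal{E}$ appears as a direct summand of a locally free coherent sheaf. Since finitely generated projective modules over local rings are free, each summand is itself locally free, and so $\mathcal{E}$ is locally free on $X$, completing the proof.
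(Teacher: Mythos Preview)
Your overall strategy is exactly the paper's: form the Higgs sheaf $(\mathcal{H},\theta) = (\mathcal{E}\oplus\mathcal{O}_X,\theta)$, observe that the hypothesis is precisely $\widehat{\Delta}(\mathcal{H})\cdot H_1\cdots H_{n-2}=0$, apply Propositions \ref{prop-Higgs-1} and \ref{prop-Higgs-2} to conclude that $\End(\mathcal{H})$ is locally free, and then extract $\mathcal{E}$ as a direct summand.

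The only substantive difference is how the nef-versus-ample issue is resolved. You assert that Langer's results \cite[Theorem 7.6]{Lan22b} and \cite[Theorem 7.12]{Lan23} already apply to multi-polarizations with a nef factor, so that Proposition \ref{prop-Higgs-2} goes through verbatim. The paper instead sidesteps this entirely: since $(\mathcal{H},\theta)$ is $(D\cdot H_1\cdots H_{n-2})$-\emph{stable} (not merely semistable), the openness-of-stability argument of \cite[Proposition 4.17]{GKPT19b} shows that $(\mathcal{H},\theta)$ remains stable for the genuinely ample polarization $\big((D+\varepsilon H_1)\cdot H_1\cdots H_{n-2}\big)$ for $0<\varepsilon\ll 1$, and then Proposition \ref{prop-Higgs-2} applies as stated. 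This perturbation trick is cleaner and self-contained; your route requires the reader to verify that the cited theorems in Langer's papers are actually formulated for nef (rather than ample) multi-polarizations, which is not obvious from the statements and would need explicit justification.
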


\begin{proof}
Consider the Higgs sheaf $(\mathcal{H} := \mathcal{E} \oplus \mathcal{O}_{X}, \theta)$ defined as in Proposition~\ref{prop-Higgs-1}.  
By Proposition~\ref{prop-Higgs-1}, the Higgs sheaf $(\mathcal{H}, \theta)$ is $(D \cdot H_{1} \cdots H_{n-2})$-stable.  
Then, by the argument of \cite[Proposition~4.17]{GKPT19b}, we deduce that $(\mathcal{H}, \theta)$ is also $((D + \varepsilon H_1) \cdot H_1 \cdots H_{n-2})$-stable for some $0 < \varepsilon \ll 1$.  
By assumption, the sheaf $\mathcal{H}$ satisfies the Bogomolov-Gieseker equality.  
Therefore, Proposition~\ref{prop-Higgs-2} shows that $\End(\mathcal{H})$ is locally free on $X$.  
Since $\mathcal{E}$ is a direct summand of
$$
\End(\mathcal{H}) = \End(\mathcal{E}) \oplus \mathcal{E} \oplus \mathcal{E}^\vee \oplus \mathcal{O}_X,
$$
it follows that $\mathcal{E}$ is locally free.
\end{proof}

\begin{lem}\label{lem-Proj-flat-bundles-are-Q-vectorbundles}
Let $X$ be a projective klt variety, and let $\mathcal{E}$ be a numerically projectively flat sheaf on $X$ such that $\det \mathcal{E}$ is a $\Q$-line bundle.  
Then, for any point $x \in X$, there exist an analytic open neighborhood $x \in U \subset X$ and a finite quasi-\'etale Galois cover 
$\pi \colon \widehat{U} \rightarrow U$ such that $\pi^{[*]} \mathcal{E}$ is locally free.
\end{lem}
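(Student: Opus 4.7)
The plan is to reduce to a maximally quasi-\'etale cover of $X$, use Proposition \ref{prop-Higgs-2} to produce a global projective representation classifying $\mathbb{P}(\mathcal{E})|_{X_{\reg}}$, trivialize this representation locally on a contractible analytic neighborhood, and finally extract an appropriate root of the determinant to upgrade the projective trivialization to a linear one after a further quasi-\'etale cover. For the reduction, by \cite[Theorem 1.14]{GKP16b} there is a finite quasi-\'etale Galois cover $\nu\colon Y \to X$ with $Y$ maximally quasi-\'etale, and the reflexive pullback $\nu^{[*]}\mathcal{E}$ inherits numerical projective flatness with its determinant remaining a $\Q$-line bundle; it thus suffices to prove the lemma on $Y$, since composing the resulting local cover with $\nu$ yields the required cover of $X$. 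I henceforth assume that $X$ is maximally quasi-\'etale.

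Applying Proposition \ref{prop-Higgs-2} to the Higgs sheaf $(\mathcal{E}, 0)$ with vanishing Higgs field---which is $(H_{1} \cdots H_{n-1})$-semistable with Bogomolov-Gieseker equality by the definition of numerical projective flatness---yields that $\End(\mathcal{E})$ is locally free on $X$. The proof of that proposition, applied via the nonabelian Hodge correspondence to $\End(\mathcal{E})$, moreover shows that the $\mathrm{PGL}(r,\mathbb{C})$-bundle classifying $\mathbb{P}(\mathcal{E})|_{X_{\reg}}$ arises from a projective representation $\rho\colon \pi_{1}(X_{\reg}) \to \mathrm{PGL}(r,\mathbb{C})$, which factors through $\pi_{1}(X)$ by Theorem \ref{thm-representation-quasi-etale}(1). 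For a fixed point $x \in X$, I would pick a contractible Stein analytic neighborhood $U \ni x$ (which exists by local triangulability of complex analytic spaces). Since $\pi_{1}(U) = 1$ and $\rho$ factors through $\pi_{1}(X)$, the restriction $\rho|_{\pi_{1}(U_{\reg})}$ is the trivial representation, so $\mathbb{P}(\mathcal{E})|_{U_{\reg}}$ is a trivial $\mathbb{P}^{r-1}$-bundle; consequently $\mathcal{E}|_{U_{\reg}} \cong L^{\oplus r}$ for some line bundle $L$ on $U_{\reg}$.

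Letting $\mathcal{L}$ denote the reflexive extension of $L$ to $U$, reflexivity gives $\mathcal{E}|_{U} \cong \mathcal{L}^{\oplus r}$, so $\mathcal{E}|_{U}$ is locally free if and only if $\mathcal{L}$ is. Since $\det \mathcal{E}|_{U} \cong \mathcal{L}^{[\otimes r]}$ is a $\Q$-line bundle, some reflexive power $\mathcal{L}^{[\otimes rm]}$ is a genuine line bundle, and after shrinking $U$ in the Euclidean topology this line bundle becomes trivial on $U$; the standard cyclic $rm$-th root construction then produces a finite Galois cover $\pi\colon \widehat{U} \to U$ of degree $rm$, which is quasi-\'etale because its ramification is contained in the codimension-at-least-two locus where $\mathcal{L}$ fails to be locally free. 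On $\widehat{U}$ the reflexive pullback $\pi^{[*]}\mathcal{L}$ is a line bundle by construction, so $\pi^{[*]}\mathcal{E} \cong \pi^{[*]}\mathcal{L} \otimes \mathcal{O}_{\widehat{U}}^{\oplus r}$ is locally free. I expect the main difficulty to be verifying rigorously that the cyclic root construction indeed produces a finite quasi-\'etale Galois cover at klt singularities, and more generally in keeping careful track of reflexive operations throughout; once these technicalities are addressed, the key conceptual input is Proposition \ref{prop-Higgs-2} combined with the observation that contractible analytic neighborhoods kill the projective monodromy.
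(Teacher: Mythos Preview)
Your proof is correct and takes a genuinely different route from the paper's. The paper's argument is purely local: it invokes Braun's theorem \cite{Bra21} on finiteness of the regional fundamental group $\pi_1(U_{\reg})$ for a small analytic neighborhood $U$ of $x$, then takes a finite quasi-\'etale cover $V\to U$ with $\pi_1(V_{\reg})=\{1\}$, and applies \cite[Proposition~3.11]{GKP22} to obtain $p^{[*]}\mathcal{E}\cong\mathcal{L}^{\oplus r}$ directly on $V$. Your approach instead globalizes first: you pass to a maximally quasi-\'etale cover of $X$, use the nonabelian Hodge correspondence (via Proposition~\ref{prop-Higgs-2} and Theorem~\ref{thm-representation-quasi-etale}) to realize $\mathbb{P}(\mathcal{E})|_{X_{\reg}}$ by a representation of $\pi_1(X)$, and then exploit the fact that the map $\pi_1(U_{\reg})\to\pi_1(X)$ factors through $\pi_1(U)=1$ for a contractible $U$. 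This cleanly replaces the deep local input \cite{Bra21} by the (also deep, but of a different flavor) global projective-flatness theory; conversely, the paper's route avoids any global passage and never needs $X$ to be maximally quasi-\'etale.

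Two minor points to tidy up: first, after composing your local cyclic cover with the global maximally quasi-\'etale cover $\nu\colon Y\to X$, the result need not be Galois over the original neighborhood in $X$, so you should take the Galois closure (as the paper also does at the end of its proof). Second, your appeal to Proposition~\ref{prop-Higgs-2} for the existence of the projective representation is slightly indirect---that proposition gives local freeness of $\End(\mathcal{E})$ and a linear representation for it, and one then needs the standard passage (e.g.\ \cite{GKP22}) from this to projective flatness of $\mathcal{E}|_{X_{\reg}}$ itself; it would be cleaner to cite that step explicitly.
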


\begin{proof}
Fix a point $x \in X$.  
By \cite[Theorem~1]{Bra21}, we can take an analytic open neighborhood $x \in U \subset X$ such that the \emph{regional fundamental group} $\pi_1(U_{\reg})$ is finite.  
By \cite[Proposition~3.13]{GKP16}, there exists a finite quasi-\'etale cover $p \colon V \to U$ such that $\pi_1(V_{\reg}) = \{\mathrm{id}\}$.  
Note that $\pi_1(V) = \{\mathrm{id}\}$ by the normality of $V$.  
Then, by \cite[Proposition~3.11]{GKP22}, there exists a reflexive sheaf $\mathcal{L}$ of rank one such that 
\[
p^{[*]}\mathcal{E} \cong \mathcal{L}^{\oplus r}.
\]

Since $\mathcal{L}^{[\otimes r]} \cong p^{[*]}\det \mathcal{E}$ is a $\Q$-line bundle by assumption, the sheaf $\mathcal{L}^{[\otimes m]}$ is locally free for some $m \in \mathbb{N}$.  
After shrinking $U$ if necessary, we may assume that $\mathcal{L}^{[\otimes m]} \cong \mathcal{O}_V$ is trivial.  
Let 
$$\widehat{p} \colon \widehat{U} \to V
$$ be the associated index-one cover such that $\widehat{p}^{[*]}\mathcal{L} = \mathcal{O}_{\widehat{U}}$, which is a finite quasi-\'etale cyclic cover of order $m$.  
Then, since $\pi_1(V_{\reg}) = \{\mathrm{id}\}$, this cover is actually the identity map.  
Finally, the Galois closure of $p \circ \widehat{p} \colon \widehat{U} \to U$ satisfies the desired conclusion.
\end{proof}

\subsection{Filtrations of reflexive sheaves}\label{subsec-filtration}

In this section, we study filtrations of reflexive sheaves in detail.  
Although the discussions and results are technical, they provide essential insights for this paper.  
The following lemma investigates the gap between torsion-free sheaves and their reflexive hulls.

\begin{lem}[{cf.\,\cite[Lemma 2.3]{Kaw92}}]
\label{lem-c2-in-short-exact-seq}
Let $X$ be a projective klt variety of dimension $n$.  
Consider an exact sequence of sheaves 
\begin{align}\label{eq-exact-a}
0 \longrightarrow \mathcal{F} \longrightarrow \mathcal{E} \longrightarrow \mathcal{Q} \longrightarrow 0,
\end{align}
where $\mathcal{F}$ and $\mathcal{E}$ are reflexive and $\mathcal{Q}$ is torsion-free.  
Then, for any ample $\mathbb{Q}$-Cartier divisors $H_{i}$, the following inequality holds$:$ 
\begin{equation}\label{eq-c2-in-short-exact-seq}
\bigl( \widehat{c}_2(\mathcal{E}) - \widehat{c}_2(\mathcal{F}) - \widehat{c}_2(\mathcal{Q}^{\vee \vee}) 
- c_1(\mathcal{F}) c_1(\mathcal{Q}) \bigr) \cdot H_1 \cdots H_{n-2} \geq 0.
\end{equation}

Moreover, if equality holds in \eqref{eq-c2-in-short-exact-seq} for some ample $\mathbb{Q}$-Cartier divisors $H_{i}$, 
then there exists a Zariski open subset $X^\circ \subset X$ with $\codim(X \setminus X^\circ) \geq 3$ such that the sequence \eqref{eq-exact-a} is Zariski-locally split on $X^\circ$.  
In particular, in this case, the natural morphism $\mathcal{Q} \to \mathcal{Q}^{\vee \vee}$ is an isomorphism over $X^\circ$.
\end{lem}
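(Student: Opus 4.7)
The plan is to reduce to a klt surface via property (P2), pass to a Galois cover provided by (P1) on which the reflexive sheaves become locally free, and invoke the classical Chern-character identity $c_2(\mathcal{Q}) - c_2(\mathcal{Q}^{\vee\vee}) = \ell(\mathcal{Q}^{\vee\vee}/\mathcal{Q})$ on a smooth surface, which yields both the inequality and the equality statement.

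After replacing each $H_i$ by a large multiple I may assume $H_i$ is very ample; cutting by general members $V_i \in |H_i|$, Bertini yields a klt surface $S := V_1 \cap \cdots \cap V_{n-2}$ on which $\mathcal{F}|_S$ and $\mathcal{E}|_S$ remain reflexive, $\mathcal{Q}|_S$ remains torsion-free (as general $V_i$ avoid the associated points), and the sequence \eqref{eq-exact-a} restricts to a short exact sequence. Iterated (P2) identifies the $(n-2)$-fold intersection number on $X$ with the corresponding numerical expression on $S$, so it suffices to treat $n=2$. In the surface case, (P1) provides a finite Galois cover $\nu : \widehat{S} \to S$ such that $\nu^{[*]}\mathcal{F}$, $\nu^{[*]}\mathcal{E}$, $\nu^{[*]}\mathcal{Q}^{\vee\vee}$ are all locally free; composing with a desingularization I may further assume $\widehat{S}$ is smooth.

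On $\widehat{S}$, outside the finite set where ordinary and reflexive pullback can disagree, the pulled-back sequence becomes an exact sequence of locally free sheaves with torsion-free quotient $\nu^{[*]}\mathcal{Q}$, and Whitney's formula combined with the torsion-cokernel identity yields
\[
c_2(\nu^{[*]}\mathcal{E}) - c_2(\nu^{[*]}\mathcal{F}) - c_2(\nu^{[*]}\mathcal{Q}^{\vee\vee}) - c_1(\nu^{[*]}\mathcal{F}) \, c_1(\nu^{[*]}\mathcal{Q}) \;=\; \ell(\mathcal{T}) \;\geq\; 0,
\]
where $\mathcal{T} := (\nu^{[*]}\mathcal{Q})^{\vee\vee}/\nu^{[*]}\mathcal{Q}$ is supported at finitely many points of $\widehat{S}$. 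Dividing by $\deg\nu$ and applying the degree formula of (P1) (and its analogue for $\widehat{c}_2$) descends the inequality to the intersection number on $X$, proving \eqref{eq-c2-in-short-exact-seq}.

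For the equality case, $\ell(\mathcal{T}) = 0$ on the cover of a general cutting-down surface, so $\mathcal{Q}|_S \to (\mathcal{Q}|_S)^{\vee\vee}$ is an isomorphism on the smooth locus of $S$. Proposition \ref{prop-vanishing-intersection-form} propagates the equality to every ample polarization, and so the support of $\mathcal{Q}^{\vee\vee}/\mathcal{Q}$ on $X$ cannot contain any codimension-two subvariety (otherwise a general $S$ would meet it in a nonempty finite set of smooth points and contribute positive length). Hence this support has codimension $\geq 3$; taking $X^\circ$ to be its complement, intersected further with the codimension-$\geq 3$ locus where $\mathcal{Q}^{\vee\vee}$ is locally free, the sheaf $\mathcal{Q}|_{X^\circ}$ is locally free and isomorphic to $\mathcal{Q}^{\vee\vee}|_{X^\circ}$, and on any affine open $U \subseteq X^\circ$ local splitting follows from $\operatorname{Ext}^1_U(\mathcal{Q}|_U, \mathcal{F}|_U) = H^1(U, \Hom(\mathcal{Q}|_U, \mathcal{F}|_U)) = 0$. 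The main technical obstacle will be the bookkeeping between ordinary and reflexive pullback under $\nu$: ordinary pullback preserves exactness while reflexive pullback preserves reflexivity, and their discrepancy at the ramification and the preimage of $S_{\sing}$ is precisely what produces $\mathcal{T}$, so care is needed to show that this contribution is correctly captured by $\widehat{c}_2$ downstairs.
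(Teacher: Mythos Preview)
Your overall strategy for the inequality---cut down to a klt surface via (P2), pass to the Galois cover from (P1), and compare $c_2$'s via the length of a torsion cokernel---is exactly the paper's. Two points deserve comment.

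First, the symbol $\nu^{[*]}\mathcal{Q}$ in your displayed identity is ambiguous in a way that matters. If it denotes the reflexive pullback $(\nu^*\mathcal{Q})^{\vee\vee}$, then $\mathcal{T}=(\nu^{[*]}\mathcal{Q})^{\vee\vee}/\nu^{[*]}\mathcal{Q}$ is zero by definition and the argument collapses; if it denotes the quotient $\nu^{[*]}\mathcal{E}/\nu^{[*]}\mathcal{F}$, you have not shown this is torsion-free or that its reflexive hull is $\nu^{[*]}(\mathcal{Q}^{\vee\vee})$. The paper sidesteps this by never pulling back the sequence: instead it takes the map $\phi\colon\pi^{[*]}\mathcal{E}\to\pi^{[*]}(\mathcal{Q}^{\vee\vee})$ between locally free sheaves (well-defined by reflexivity), checks $\ker\phi=\pi^{[*]}\mathcal{F}$ on a big open, and sets $\mathcal{T}:=\Coker(\phi)$. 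This makes the Whitney computation honest and is precisely the ``bookkeeping'' you flag as a technical obstacle. Relatedly, the desingularisation step is unnecessary: on the singular cover $\widehat{S}$ the skyscraper $\mathcal{T}$ still admits a finite locally free resolution (Mumford's lemma), so the length identity $-c_2(\mathcal{T})=\ell(\mathcal{T})$ is available there.

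Second, your treatment of the equality case is genuinely different from the paper's. You argue indirectly: vanishing length on general $S$ forces $\supp(\mathcal{Q}^{\vee\vee}/\mathcal{Q})$ to have codimension $\geq 3$ in $X$, and then on the complement $X^\circ$ the quotient is locally free and Zariski-local splitting is automatic by affine $H^1$-vanishing. The paper instead works entirely on the fixed surface $S$: once $\mathcal{T}=0$, the sequence $0\to\pi^{[*]}\mathcal{F}\to\pi^{[*]}\mathcal{E}\to\pi^{[*]}(\mathcal{Q}^{\vee\vee})\to 0$ is an exact sequence of vector bundles, hence locally split; the splitting descends to $S$ by averaging over the Galois group. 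Your route is cleaner in that it avoids the averaging trick, but it needs the first issue above resolved before the conclusion $\mathcal{Q}|_S\cong(\mathcal{Q}|_S)^{\vee\vee}$ on $S_{\reg}$ is justified (note $\nu$ may ramify over smooth points of $S$, so ``$\mathcal{T}=0$ upstairs'' does not immediately give this without knowing what $\mathcal{T}$ actually is).
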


\begin{proof}
We first reduce the proof to the case where $X$ is a surface.  
Let $S$ be the surface defined by the complete intersection
\[
S := V_{1} \cap \cdots \cap V_{n-2},
\]
where $V_{i}$ is a general member of $|m_i H_i|$ for $m_i \gg 1$.  
Then the sheaves $\mathcal{F}|_{S}$, $\mathcal{E}|_{S}$, and $\mathcal{Q}|_{S}$ satisfy the assumptions of the lemma.  
Indeed, a general member $V_{i}$ does not contain the associated primes of the relevant sheaves, so the restriction of \eqref{eq-exact-a} to $S$ remains exact, and reflexivity of sheaves on $X$ is preserved under restriction to $S$.  
Furthermore, by property (P2), the left-hand side of \eqref{eq-c2-in-short-exact-seq} coincides with
\[
\widehat{c}_2(\mathcal{E}|_{S}) - \widehat{c}_2(\mathcal{F}|_{S}) 
- \widehat{c}_2(\mathcal{Q}^{\vee \vee}|_{S}) 
- c_1(\mathcal{F}|_{S}) c_1(\mathcal{Q}|_{S}).
\]
Finally, if the sequence splits locally after restriction to $S$, then it also splits locally in a neighborhood of $S$.  
Therefore, we may assume that $X$ is a surface.

As in property (P1), we can take a finite Galois cover $\pi \colon \widehat{X} \to X$ such that $\pi^{[*]}\mathcal{E}$ is locally free on $\widehat{X}$ and satisfies
\begin{equation}
\label{eq-c2-in-short-exact-seq-1}
\bigl( \pi_* \pi^{[*]}\mathcal{E} \bigr)^G = \mathcal{E} 
\quad \text{and} \quad
c_2(\pi^{[*]}\mathcal{E}) = d \cdot \widehat{c}_2(\mathcal{E}),
\end{equation}
where $G := \mathrm{Gal}(\widehat{X}/X)$ and $d := \deg \pi = |G|$.  
We may assume that the same properties also hold for $\mathcal{F}$ and $\mathcal{Q}^{\vee \vee}$.  

Consider the induced exact sequence 
\begin{equation}\label{exact-seq-tor}
0 \longrightarrow \pi^{[*]}\mathcal{F} 
\longrightarrow \pi^{[*]}\mathcal{E} 
\overset{\phi}{\longrightarrow} \pi^{[*]}(\mathcal{Q}^{\vee \vee}) 
\longrightarrow \mathcal{T} := \Coker(\phi) 
\longrightarrow 0,
\end{equation}
where $\mathcal{T}$ is a skyscraper sheaf on $\widehat{X}$.  
By \cite[Proposition~2.1]{Mum83} or \cite[Lemma~10.5]{K++}, the sheaf $\mathcal{T}$ admits a finite locally free resolution on $\widehat{X}$, which allows us to define $c_2(\mathcal{T})$.  
Moreover, by \cite[Lemma~3.3]{Mum83} or \cite[Lemma~10.9]{K++}, we have that $-c_2(\mathcal{T})$ equals the length of $\mathcal{T}$ (hence non-negative).  
Thus, using \eqref{eq-c2-in-short-exact-seq-1}, we deduce
\begin{align*}
d \cdot \widehat{c}_2(\mathcal{E}) 
&= c_2(\pi^{[*]}\mathcal{E}) \\
&= c_2(\pi^{[*]}\mathcal{F}) 
+ c_2(\pi^{[*]}\mathcal{Q}^{\vee \vee}) 
+ c_1(\pi^{[*]}\mathcal{F}) c_1(\pi^{[*]}\mathcal{Q}^{\vee \vee})
- c_2(\mathcal{T}) \\
&\geq c_2(\pi^{[*]}\mathcal{F}) 
+ c_2(\pi^{[*]}\mathcal{Q}^{\vee \vee}) 
+ c_1(\pi^{[*]}\mathcal{F}) c_1(\pi^{[*]}\mathcal{Q}^{\vee \vee}) \\
&= d \cdot \Bigl( \widehat{c}_2(\mathcal{F}) 
+ \widehat{c}_2(\mathcal{Q}^{\vee \vee}) 
+ \widehat{c}_1(\mathcal{F}) \widehat{c}_1(\mathcal{Q}^{\vee \vee}) \Bigr).
\end{align*}
This proves \eqref{eq-c2-in-short-exact-seq}.

Assume that equality holds in \eqref{eq-c2-in-short-exact-seq}. 
Then, the skyscraper sheaf $\mathcal{T}$ vanishes, which means that 
\begin{equation}
\label{eq-exact-local-free}
0 \longrightarrow \pi^{[*]}\mathcal{F} 
\longrightarrow \pi^{[*]}\mathcal{E} 
\overset{\phi}{\longrightarrow} \pi^{[*]}(\mathcal{Q}^{\vee \vee}) 
\longrightarrow 0
\end{equation}
is exact.  
Since all sheaves in \eqref{eq-exact-local-free} are locally free, the sequence is Zariski locally split.  
By pushing forward \eqref{eq-exact-local-free} via $\pi$ and taking $G$-invariants, we deduce that 
\begin{equation}
\label{eq-exact-og'}
0 
\longrightarrow \bigl(\pi_* \pi^{[*]}\mathcal{F}\bigr)^G 
\longrightarrow \bigl(\pi_* \pi^{[*]}\mathcal{E}\bigr)^G
\longrightarrow \bigl(\pi_* \pi^{[*]}(\mathcal{Q}^{\vee \vee})\bigr)^G
\longrightarrow 0
\end{equation}
is exact.  
Note that $\bigl(\pi_*(\bullet)\bigr)^G$ is exact since $\pi$ is a finite morphism and $G$ is finite (cf.\,\cite[Lemma~B.3]{GKKP11}).  
Moreover, since $\pi$ is finite, $\pi_*$ preserves reflexivity of sheaves.  
Hence the natural morphism 
\[
\mathcal{F} \longrightarrow \bigl(\pi_* \pi^{[*]}\mathcal{F}\bigr)^G
\]
is an isomorphism, since it is clearly an isomorphism on the locally free locus of $\mathcal{F}$ (which has codimension $\geq 2$).  
The same statement holds for $\mathcal{E}$ and $\mathcal{Q}^{\vee \vee}$.  
Thus we obtain the exact sequence
\begin{equation}
\label{eq-exact-og}
0 \longrightarrow \mathcal{F} \longrightarrow \mathcal{E} 
\longrightarrow \mathcal{Q}^{\vee \vee} \longrightarrow 0.
\end{equation}
This shows that $\mathcal{Q} \to \mathcal{Q}^{\vee \vee}$ is an isomorphism.  
Moreover, if $\sigma \colon \pi^{[*]} \mathcal{Q}|_U \to \pi^{[*]} \mathcal{E}|_U$ gives a splitting of \eqref{eq-exact-local-free} over a $G$-invariant open subset $U \subset \widehat{X}$, then
\[
\frac{1}{|G|} \sum_{g \in G} g^* \sigma \colon 
\mathcal{Q}|_U \cong \bigl(\pi_* \pi^{[*]} \mathcal{Q}|_U \bigr)^G 
\longrightarrow \bigl(\pi_* \pi^{[*]} \mathcal{E}|_U \bigr)^G 
\cong \mathcal{E}|_U
\]
yields a splitting of \eqref{eq-exact-og} over $\pi(U) \subset X$.
\end{proof}

The following lemma, although technical, plays a crucial role in what follows.

\begin{lem}\label{lem-(co)tangent-extension-of-proj-flat}
Let $X$ be a projective klt variety of dimension $n$ and let $H_1, \ldots, H_n$ be ample divisors on $X$. Consider an exact sequence of sheaves 
\begin{align}\label{eq-exact-b}
0 \longrightarrow \mathcal{F} 
\longrightarrow \mathcal{E} 
\longrightarrow \mathcal{Q} 
\longrightarrow 0, 
\end{align}
where $\mathcal{F}$ and $\mathcal{E}$ are reflexive and $\mathcal{Q}$ is torsion-free. Assume that $\mathcal{F}$ and $\mathcal{Q}^{\vee \vee}$ are numerically projectively flat and that
$\det(\mathcal{F})$ and $\det(\mathcal{Q})$ are $\Q$-line bundles. If
$$
\left( \widehat{c}_2(\mathcal{E}) - \widehat{c}_2(\mathcal{F}) -  \widehat{c}_2(\mathcal{Q}^{\vee \vee}) -  c_1(\mathcal{F}) c_1(\mathcal{Q}) \right)\cdot H_1 \cdots  H_{n-2} 
 = 0,
$$
holds, 
then the sequence \eqref{eq-exact-b} is analytically locally split on $X$. 
In particular, the following statements hold$:$
\begin{itemize}
\item[$(1)$] $\mathcal{Q}$ is reflexive, and 
\item[$(2)$] the dual sequence 
\begin{align}\label{eq-(co)tangent-extension-of-proj-flat-0}
0 \rightarrow \mathcal{Q}^\vee \rightarrow \mathcal{E}^\vee \rightarrow \mathcal{F}^\vee \rightarrow 0
\end{align}
is exact.
\end{itemize}
\end{lem}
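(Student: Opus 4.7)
The plan is to combine Lemma~\ref{lem-c2-in-short-exact-seq}, which provides Zariski-local splittings on a very big open subset, with Lemma~\ref{lem-Proj-flat-bundles-are-Q-vectorbundles}, which analytic-locally trivializes both $\mathcal{F}$ and $\mathcal{Q}^{\vee\vee}$ after a finite quasi-\'etale Galois cover. The reflexivity of $\mathcal{Q}$ and the exactness of the dual sequence will be immediate consequences of analytic local splitting.

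First, applying Lemma~\ref{lem-c2-in-short-exact-seq} to the given Chern-class equality produces a Zariski open subset $X^\circ \subseteq X$ with $\codim(X \setminus X^\circ) \geq 3$ on which \eqref{eq-exact-b} is Zariski-locally split and the canonical morphism $\mathcal{Q}|_{X^\circ} \to \mathcal{Q}^{\vee\vee}|_{X^\circ}$ is an isomorphism. Now fix an arbitrary point $x \in X$. Since $\mathcal{F}$ and $\mathcal{Q}^{\vee\vee}$ are numerically projectively flat with $\mathbb{Q}$-line bundle determinants, Lemma~\ref{lem-Proj-flat-bundles-are-Q-vectorbundles} applied to each (followed by passing to a common Galois closure) provides an analytic open neighborhood $U \ni x$ and a finite quasi-\'etale Galois cover $\pi\colon \hat U \to U$ with group $G$ such that $\pi^{[*]}\mathcal{F}$ and $\pi^{[*]}\mathcal{Q}^{\vee\vee}$ are locally free. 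Shrinking $\hat U$ so that it is Stein, we may further assume $\pi^{[*]}\mathcal{F} \cong \mathcal{O}_{\hat U}^{\oplus r_1}$ and $\pi^{[*]}\mathcal{Q}^{\vee\vee} \cong \mathcal{O}_{\hat U}^{\oplus r_2}$. Pulling back the composition $\mathcal{E} \to \mathcal{Q} \hookrightarrow \mathcal{Q}^{\vee\vee}$ (whose kernel is $\mathcal{F}$ and whose cokernel $\mathcal{Q}^{\vee\vee}/\mathcal{Q}$ is supported on $X \setminus X^\circ$) and taking reflexive hulls yields on $\hat U$ a complex
\begin{equation*}
0 \to \mathcal{O}_{\hat U}^{\oplus r_1} \to \pi^{[*]}\mathcal{E} \xrightarrow{\psi} \mathcal{O}_{\hat U}^{\oplus r_2}
\end{equation*}
whose restriction to the big open $\pi^{-1}(X^\circ \cap U)$ is a short exact sequence (the surjectivity of $\psi$ there follows from the vanishing of $\mathcal{Q}^{\vee\vee}/\mathcal{Q}$ on $X^\circ$).

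The heart of the argument is to split this short exact sequence. Its extension class lives in $H^1(\pi^{-1}(X^\circ \cap U), \mathcal{O}_{\hat U}^{\oplus r_1 r_2})$; since $\hat U$ is Stein and Cohen--Macaulay (as $\hat U$ is klt), and since the removed set $\pi^{-1}(X \setminus X^\circ)$ has $\codim \geq 3$, local-cohomology vanishing gives $H^1(\pi^{-1}(X^\circ \cap U), \mathcal{O}_{\hat U}) = H^1(\hat U, \mathcal{O}_{\hat U}) = 0$. Hence one obtains a section $\tilde\sigma\colon \mathcal{O}_{\hat U}^{\oplus r_2} \to \pi^{[*]}\mathcal{E}$ over $\pi^{-1}(X^\circ \cap U)$ with $\psi \circ \tilde\sigma = \mathrm{id}$. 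Since the target $\pi^{[*]}\mathcal{E}$ is reflexive, Hartogs extends $\tilde\sigma$ uniquely to all of $\hat U$; and the identity $\psi \circ \tilde\sigma = \mathrm{id}$, verified on the big open, propagates to $\hat U$ via the reflexivity of $\End(\mathcal{O}_{\hat U}^{\oplus r_2})$. Consequently, $\psi$ is surjective on $\hat U$, the cokernel $\tilde\mathcal{Q} := \pi^{[*]}\mathcal{E}/\pi^{[*]}\mathcal{F}$ is canonically isomorphic to $\mathcal{O}_{\hat U}^{\oplus r_2}$ via $\psi$, and $\tilde\sigma$ is a splitting of the pulled-back short exact sequence. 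Averaging $\tilde\sigma$ over $G$ and descending via the $G$-invariant identification $(\pi_*\bullet)^G = \bullet$ for reflexive sheaves (as used in the proof of Lemma~\ref{lem-c2-in-short-exact-seq}) then yields an analytic local splitting of \eqref{eq-exact-b} on $U$.

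Conclusions (1) and (2) follow immediately: $\mathcal{Q}|_U$ is a direct summand of the reflexive sheaf $\mathcal{E}|_U$ and hence reflexive, so $\mathcal{Q}$ is reflexive on $X$; and the dual of a split short exact sequence is split exact. The principal technical obstacle is the local-cohomology vanishing in the patching step, which relies crucially on the $\codim \geq 3$ estimate from Lemma~\ref{lem-c2-in-short-exact-seq} together with the Cohen--Macaulay property of klt singularities.
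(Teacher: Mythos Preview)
Your proof is correct and follows the same overall architecture as the paper's: pass to a local quasi-\'etale Galois cover on which $\pi^{[*]}\mathcal{F}$ and $\pi^{[*]}\mathcal{Q}^{\vee\vee}$ become locally free, establish that the pulled-back sequence is split exact, and descend by averaging over the Galois group. The difference lies in how you promote ``exact and split in codimension two'' to ``split exact everywhere'' on the cover. The paper invokes \cite[Lemma~9.9]{AD14} as a black box: given that $\pi^{[*]}\mathcal{F}$ and $\pi^{[*]}\mathcal{Q}^{\vee\vee}$ are locally free and the sequence is exact in codimension two on a Cohen--Macaulay space, that lemma forces exactness (hence local splitting) on all of $\widehat{U}$. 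You instead give an explicit argument: vanishing of $H^1\big(\widehat{U}\setminus Z,\, (\pi^{[*]}\mathcal{Q}^{\vee\vee})^\vee \otimes \pi^{[*]}\mathcal{F}\big)$ via local cohomology (using $\codim Z \geq 3$ and Cohen--Macaulayness) produces a splitting on the big open, and Hartogs extends it. Your route is more self-contained, essentially reproving the needed case of the cited lemma; the paper's is shorter.

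Two minor points of presentation. First, the assertion ``we may further assume $\pi^{[*]}\mathcal{F}\cong\mathcal{O}_{\widehat U}^{\oplus r_1}$'' is not automatic from Steinness alone, since $\widehat U$ need not be contractible; however, this is inessential, as your $H^1$-vanishing works verbatim for the locally free sheaf $(\pi^{[*]}\mathcal{Q}^{\vee\vee})^\vee \otimes \pi^{[*]}\mathcal{F}$ in place of $\mathcal{O}^{\oplus r_1 r_2}$ (and in fact the proof of Lemma~\ref{lem-Proj-flat-bundles-are-Q-vectorbundles} does yield triviality, not merely local freeness). Second, the exactness of the reflexive-pullback sequence on $\pi^{-1}(X^\circ\cap U)$ is justified because the original sequence is Zariski-locally \emph{split} on $X^\circ$, so locally $\mathcal{E}\cong\mathcal{F}\oplus\mathcal{Q}^{\vee\vee}$ and reflexive pullback commutes with direct sums; this is the same observation the paper uses.
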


\begin{proof}
Fix a point $x \in X$.  
By Lemma~\ref{lem-Proj-flat-bundles-are-Q-vectorbundles}, 
there exist an analytic open set $U \subset X$ and a finite quasi-\'etale Galois cover 
$\pi \colon V \to U$ 
such that both $\pi^{[*]}\mathcal{F}$ and $\pi^{[*]}\mathcal{Q}$ are locally free on $V$.  

We first show that the induced sequence 
\begin{equation}\label{eq-exact}
0 \longrightarrow \pi^{[*]}\mathcal{F} 
\longrightarrow \pi^{[*]}\mathcal{E} 
\longrightarrow \pi^{[*]}\mathcal{Q} 
\longrightarrow 0
\end{equation}
is an exact sequence of vector bundles on $V$.  
The exactness on the left is immediate.  
By Lemma~\ref{lem-c2-in-short-exact-seq}, the sequence \eqref{eq-exact} is exact (indeed, even locally split) in codimension two.  
Note that $V$ has klt singularities since it is a finite quasi-\'etale cover of the klt variety $U$; hence $V$ is Cohen-Macaulay.  
Applying \cite[Lemma~9.9]{AD14}, we conclude that \eqref{eq-exact} is an exact sequence of vector bundles.  
Strictly speaking, \cite[Lemma~9.9]{AD14} is stated only for schemes, but its proof applies equally to complex analytic varieties.  

Let $G$ be the Galois group of $\pi \colon V \to U$.  
Then, as in the second half of the proof of Lemma~\ref{lem-c2-in-short-exact-seq}, the sequence
\begin{equation}\label{eq-isom2}
0 
\longrightarrow \bigl(\pi_* \pi^{[*]}\mathcal{F}\bigr)^G 
\longrightarrow \bigl(\pi_* \pi^{[*]}\mathcal{E}\bigr)^G
\longrightarrow \bigl(\pi_* \pi^{[*]}\mathcal{Q}\bigr)^G
\longrightarrow 0
\end{equation}
is exact, analytically locally split on $U$, and can be identified with the exact sequence
\begin{equation*}
0 \longrightarrow \mathcal{F} 
\longrightarrow \mathcal{E} 
\longrightarrow \mathcal{Q}^{\vee \vee} 
\longrightarrow 0.
\end{equation*}
This shows that $\mathcal{Q}$ is reflexive and that \eqref{eq-exact-b} is analytically locally split.  
\end{proof}

\section{Harder-Narasimhan filtrations of generically nef reflexive sheaves}
\label{section:3}
In this subsection, we extend to klt varieties a well-known relation between $\widehat{\Delta}(\mathcal{E})$ and the Harder-Narasimhan filtration of $\mathcal{E}$, originally established for smooth varieties (see, for example, \cite{Lan04}).  
Throughout this section, we work under the following setup: 

\begin{setup}
\label{setup-torsionfree-Langer}

Let $X$ be a projective klt variety of dimension $n$, and let $\mathcal{E}$ be a reflexive sheaf of rank $r$ on $X$.  
Let $D$ be a nef $\mathbb{Q}$-Cartier divisor, and let $H_1, \ldots, H_{n-2}$ be ample $\mathbb{Q}$-Cartier divisors on $X$.  
Following \cite[Corollary~2.27]{GKP16a}, we consider the $(D \cdot H_1 \cdots H_{n-2})$-Harder-Narasimhan filtration of $\mathcal{E}$:
\[
0 =: \mathcal{E}_0 \subsetneq \mathcal{E}_1 \subsetneq \cdots \subsetneq \mathcal{E}_l := \mathcal{E}.
\]
We adopt the following notation:
\begin{itemize}
    \item $\mathcal{G}_i := \mathcal{E}_i / \mathcal{E}_{i-1}$
    \quad  and \quad $r_i := \operatorname{rk}(\mathcal{G}_i)$,
    \item $\alpha := D \cdot H_1 \cdots H_{n-2}$,
    \item $\mu_i := \mu_{\alpha}(\mathcal{G}_i)$ \quad  and \quad $\mu := \mu_{\alpha}(\mathcal{E})$,
    \item $\widehat{\Delta}(\mathcal{E}_i) := 2 r_i \widehat{c}_2(\mathcal{E}_i) - (r_i - 1)\widehat{c}_1(\mathcal{E}_i)^2$.
\end{itemize}
By construction, the graded pieces $\mathcal{G}_i$ are torsion-free semistable sheaves with $\mu_i > \mu_{i+1}$.  
The \emph{maximum slope} $\mu_1 = \mu_{\alpha}^{\max}(\mathcal{E})$ is the supremum of $\mu_{\alpha}(\mathcal{F})$, where $\mathcal{F}$ ranges over all non-zero subsheaves $\mathcal{F} \subset \mathcal{E}$.  
Similarly, the \emph{minimal slope} $\mu_l = \mu_{\alpha}^{\min}(\mathcal{E})$ is the infimum of $\mu_{\alpha}(\mathcal{Q})$, where $\mathcal{Q}$ ranges over all torsion-free quotients $\mathcal{E} \twoheadrightarrow \mathcal{Q}$.  
The first graded piece $\mathcal{E}_1$ is called the \emph{maximal destabilizing subsheaf} of $\mathcal{E}$.
\end{setup}

We begin with the following elementary lemmas.  
Although these results were previously proved in \cite{Lan04},  
we include the proofs here for completeness, with particular attention to the conditions under which equality holds.

\begin{lem}[{cf.\,\cite[Lemma 1.4]{Lan04}}]\label{lem-Langer-inequality}
Let $l \in \mathbb{Z}_{+}$, and let $r, \mu, r_1, \dots, r_l, \mu_1, \ldots, \mu_l \in \mathbb{R}_{+}$
satisfy the following conditions$:$ 
\begin{enumerate}
    \item[$(1)$] $r = r_1 + \cdots + r_l$.
    \item[$(2)$] $\mu_1 > \cdots > \mu_l \ge 0$.
    \item[$(3)$] $r\mu = r_1 \mu_1 + \cdots + r_l \mu_l$.
\end{enumerate}
Then, the following inequality holds$:$
\begin{equation}
\label{eq-Langer-inequality1}
\sum_{1 \le i < j \le l} r_i r_j (\mu_i - \mu_j)^2 
\le r^2(\mu_1 - \mu)(\mu - \mu_l).
\end{equation}
Moreover, if equality  holds in \eqref{eq-Langer-inequality1}, then $l \le 2$. 
\end{lem}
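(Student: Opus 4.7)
The statement is a purely numerical inequality about the slopes of a Harder-Narasimhan filtration, so the proof should be entirely elementary. My plan is to reduce the inequality, after a linear change of variables, to a single convexity fact and then read off the equality case.

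First, I would normalize. Set $\nu_{i}:=\mu_{i}-\mu$ for $1\le i\le l$. Then $(3)$ becomes $\sum_{i=1}^{l}r_{i}\nu_{i}=0$, while $(2)$ gives $\nu_{1}>\nu_{2}>\cdots>\nu_{l}$ with $\nu_{1}\geq 0\geq\nu_{l}$. The right-hand side of \eqref{eq-Langer-inequality1} becomes $-r^{2}\nu_{1}\nu_{l}$, and I would first show that the left-hand side simplifies:
\begin{equation*}
\sum_{1\le i<j\le l}r_{i}r_{j}(\mu_{i}-\mu_{j})^{2}
=\frac{1}{2}\sum_{i,j}r_{i}r_{j}(\nu_{i}-\nu_{j})^{2}
=r\sum_{i=1}^{l}r_{i}\nu_{i}^{2}-\Bigl(\sum_{i=1}^{l}r_{i}\nu_{i}\Bigr)^{\!2}
=r\sum_{i=1}^{l}r_{i}\nu_{i}^{2},
\end{equation*}
using $\sum r_{i}=r$ and $\sum r_{i}\nu_{i}=0$. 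So the inequality reduces to $\sum_{i}r_{i}\nu_{i}^{2}\le -r\nu_{1}\nu_{l}$.

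Next I would prove this reduced inequality by the pointwise bound $\nu_{l}\le\nu_{i}\le\nu_{1}$, which gives $(\nu_{1}-\nu_{i})(\nu_{i}-\nu_{l})\ge 0$, i.e.
\begin{equation*}
\nu_{i}^{2}\le(\nu_{1}+\nu_{l})\,\nu_{i}-\nu_{1}\nu_{l}.
\end{equation*}
Multiplying by $r_{i}\ge 0$ and summing over $i$, the linear term vanishes because $\sum r_{i}\nu_{i}=0$, and we are left with $\sum_{i}r_{i}\nu_{i}^{2}\le -r\nu_{1}\nu_{l}$, as required.

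For the equality case, equality in the sum forces $(\nu_{1}-\nu_{i})(\nu_{i}-\nu_{l})=0$ for every $i$, so each $\nu_{i}$ lies in $\{\nu_{1},\nu_{l}\}$. Since the $\nu_{i}$ are strictly decreasing, at most two of them can occur, hence $l\le 2$. There is no real obstacle here; the only thing to keep an eye on is tracking carefully that $\sum r_{i}\nu_{i}=0$ is used twice (once to simplify the LHS, once to cancel the linear term), and that $r_{i}>0$ (from $r_{i}\in\mathbb{Z}_{+}$, implicit in being ranks in the intended application) is needed so that each summand actually forces $(\nu_{1}-\nu_{i})(\nu_{i}-\nu_{l})=0$ in the equality case.
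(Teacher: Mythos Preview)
Your proof is correct and takes essentially the same approach as the paper: both reduce to the pointwise bound $(\mu_1-\mu_i)(\mu_i-\mu_l)\ge 0$ and sum with weights $r_i$, deducing $l\le 2$ in the equality case from strict monotonicity of the $\mu_i$. The only cosmetic difference is that the paper normalizes multiplicatively (rescaling so $r=\mu=1$) whereas you normalize additively (shifting to $\nu_i=\mu_i-\mu$); your version has the slight advantage that the linear term $\sum r_i\nu_i$ vanishes for free. One tiny slip: the hypothesis is $r_i\in\mathbb{R}_{+}$, not $\mathbb{Z}_{+}$, so $r_i>0$ is given directly rather than ``implicit in being ranks.''
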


\begin{proof}
By rescaling $r_i$ to $r_i/r$ and $\mu_i$ to $\mu_i/\mu$, we may assume $r = \mu = 1$.  
From conditions~(1) and~(3), we obtain 
\begin{align*}
\sum_{1 \leq i < j \leq l} r_i r_j (\mu_i - \mu_j)^2 
&= \frac{1}{2} \sum_{i=1}^{l} \sum_{j=1}^{l} r_i r_j (\mu_i - \mu_j)^2 \\
&= \sum_{i=1}^{l} r_i \mu_{i}^2 - 1.
\end{align*}
Using condition~(2), we estimate the right-hand side as follows:
\begin{align*}
\sum_{i=1}^{l} r_i \mu_{i}^2 - 1 
&= \sum_{i=1}^{l} r_i (\mu_{i} + \mu_{l})(\mu_{i} - \mu_{l}) - 1 + \mu_{l}^{2} \\
&\leq (\mu_{1} + \mu_{l}) \sum_{i=1}^{l} r_i (\mu_{i} - \mu_{l}) - 1 + \mu_{l}^{2} \\
&= (\mu_{1} + \mu_{l})(1 - \mu_{l}) - 1 + \mu_{l}^{2} 
\quad \text{(since $\sum_{i=1}^{l} r_i = 1$ and $\sum_{i=1}^{l} r_i \mu_i = 1$)} \\
&= (\mu_{1} - 1)(1 - \mu_{l}).
\end{align*}
This proves the desired inequality \eqref{eq-Langer-inequality1}.  

If equality  holds in \eqref{eq-Langer-inequality1}, the above argument shows that 
\[
r_i (\mu_i - \mu_1)(\mu_i - \mu_l) = 0 \quad \text{for all } i = 1, \ldots, l.
\]
By condition~(2), this implies $l \leq 2$.
\end{proof}

\begin{lem}[{cf.\,\cite[Theorem 5.1]{Lan04}}]
\label{lem-Langer-Theorem:5.1}
Consider Setup \ref{setup-torsionfree-Langer}. Then, the following inequality holds$:$
\begin{align}
\begin{split}
\label{eq-Langer-Theorem:5.1}
\frac{\widehat{\Delta}(\mathcal{E}) }{r} H_1 \cdots H_{n-2} \
 &\ge \ \sum_{i=1}^{l} \frac{\widehat{\Delta}(\mathcal{G}_{i}^{\vee\vee})}{r_i} H_1 \cdots H_{n-2} \\
&\quad - \frac{1}{r} \sum_{1 \le i < j \le l} r_i r_j \left( \frac{c_1(\mathcal{G}_{i})}{r_i} - \frac{c_1(\mathcal{G}_{j})}{r_j} \right)^2 H_1 \cdots H_{n-2}.
\end{split}
\end{align}
Moreover, if equality  holds in \eqref{eq-Langer-Theorem:5.1}, 
then the natural morphism $\mathcal{G}_{i} \to \mathcal{G}_{i}^{\vee\vee}$ is isomorphism in codimension two for any $i=1, \ldots, l$.
\end{lem}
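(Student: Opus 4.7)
The plan is to iterate Lemma \ref{lem-c2-in-short-exact-seq} along the Harder-Narasimhan filtration $0 = \mathcal{E}_0 \subsetneq \mathcal{E}_1 \subsetneq \cdots \subsetneq \mathcal{E}_l = \mathcal{E}$ to control $\widehat{c}_2(\mathcal{E})$ in terms of the reflexive hulls $\mathcal{G}_i^{\vee\vee}$, and then to combine this with the standard algebraic identity expressing the discriminant of a filtered sheaf via those of its graded pieces. Since $c_1$ is additive in short exact sequences and $c_1(\mathcal{G}_i) = c_1(\mathcal{G}_i^{\vee\vee})$, the only source of the \emph{inequality} (as opposed to an equality, as in the smooth locally-free case) is the non-negative $\widehat{c}_2$-correction supplied by Lemma \ref{lem-c2-in-short-exact-seq}.

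Concretely, I would first establish by induction on $k \in \{1, \ldots, l\}$ the estimate
\begin{equation*}
\widehat{c}_2(\mathcal{E}_k) \cdot H_1 \cdots H_{n-2} \ \geq \ \Big( \sum_{i=1}^{k} \widehat{c}_2(\mathcal{G}_i^{\vee\vee}) + \sum_{1 \leq i < j \leq k} c_1(\mathcal{G}_i) \, c_1(\mathcal{G}_j) \Big) \cdot H_1 \cdots H_{n-2}.
\end{equation*}
The base case $k = 1$ is trivial because $\mathcal{E}_1 = \mathcal{G}_1$ is reflexive (being a term of the Harder-Narasimhan filtration). For the inductive step I would apply Lemma \ref{lem-c2-in-short-exact-seq} to the short exact sequence $0 \to \mathcal{E}_{k-1} \to \mathcal{E}_k \to \mathcal{G}_k \to 0$, which has the required shape (reflexive, reflexive, torsion-free), and substitute the inductive hypothesis together with the additivity identity $c_1(\mathcal{E}_{k-1}) = \sum_{i < k} c_1(\mathcal{G}_i)$. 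Taking $k = l$ yields the estimate for $\mathcal{E}$ itself.

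Once this estimate is in hand, the target inequality \eqref{eq-Langer-Theorem:5.1} follows by a purely formal computation. Unpacking the definition $\widehat{\Delta}(\mathcal{F}) = 2\rk(\mathcal{F})\widehat{c}_2(\mathcal{F}) - (\rk(\mathcal{F}) - 1)\widehat{c}_1(\mathcal{F})^2$ and expanding $\widehat{c}_1(\mathcal{E})^2 = \bigl(\sum_i c_1(\mathcal{G}_i)\bigr)^2$, one checks directly that
\begin{equation*}
\widehat{\Delta}(\mathcal{E}) - \sum_{i=1}^{l} \frac{r}{r_i} \widehat{\Delta}(\mathcal{G}_i^{\vee\vee}) = 2r \Big( \widehat{c}_2(\mathcal{E}) - \sum_{i} \widehat{c}_2(\mathcal{G}_i^{\vee\vee}) - \sum_{i<j} c_1(\mathcal{G}_i) c_1(\mathcal{G}_j) \Big) - \sum_{i<j} r_i r_j \Big( \frac{c_1(\mathcal{G}_i)}{r_i} - \frac{c_1(\mathcal{G}_j)}{r_j} \Big)^2,
\end{equation*}
where the first bracketed term is exactly the non-negative quantity bounded in the previous paragraph. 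Intersecting with $H_1 \cdots H_{n-2}$ and dividing by $r$ converts the $\widehat{c}_2$-estimate above into \eqref{eq-Langer-Theorem:5.1}.

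For the equality case, if equality holds in \eqref{eq-Langer-Theorem:5.1} then the above identity forces equality in the $k = l$ version of the $\widehat{c}_2$-estimate. Since each inductive step contributes the non-negative quantity $\bigl(\widehat{c}_2(\mathcal{E}_k) - \widehat{c}_2(\mathcal{E}_{k-1}) - \widehat{c}_2(\mathcal{G}_k^{\vee\vee}) - c_1(\mathcal{E}_{k-1}) c_1(\mathcal{G}_k)\bigr) \cdot H_1 \cdots H_{n-2} \geq 0$ by Lemma \ref{lem-c2-in-short-exact-seq}, equality at $k = l$ forces equality at every single step $k$. The ``moreover" clause of Lemma \ref{lem-c2-in-short-exact-seq} then yields, for each $i$, a Zariski-open subset of $X$ with complement of codimension $\geq 3$ on which $\mathcal{G}_i \to \mathcal{G}_i^{\vee\vee}$ is an isomorphism, which is precisely the asserted isomorphism in codimension two. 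I do not anticipate any serious obstacle: the essential technical input is already packaged in Lemma \ref{lem-c2-in-short-exact-seq}, and everything beyond that is bookkeeping along the filtration.
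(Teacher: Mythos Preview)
Your proposal is correct and follows exactly the approach the paper indicates: the paper's own proof simply states that the case $l=2$ is a direct consequence of Lemma~\ref{lem-c2-in-short-exact-seq} and that the general case follows by inductively applying that lemma, referring to \cite[Chapter~5]{Lan04} for details. You have spelled out precisely this induction, together with the algebraic identity relating the discriminants, and your treatment of the equality case via the ``moreover'' clause of Lemma~\ref{lem-c2-in-short-exact-seq} at each inductive step is exactly what is needed.
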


\begin{proof}
In the case $l=2$, this lemma follows directly from Lemma~\ref{lem-c2-in-short-exact-seq}.  
In the general case, by applying Lemma~\ref{lem-c2-in-short-exact-seq} inductively,  
we obtain the desired conclusion (see \cite[Chapter~5]{Lan04}).
\end{proof}

With the above preparations in place, we are now ready to prove the following proposition:

\begin{prop}[{cf.\,\cite[Theorem 5.1]{Lan04}}]
\label{prop-Langer-inequality}
Consider Setup \ref{setup-torsionfree-Langer} and assume that $d := D^{2} H_1 \cdots H_{n-2} > 0$. 
Then, the following inequality holds$:$
\begin{equation}
\label{eq-Langer-inequality}
\frac{\widehat{\Delta}(\mathcal{E}) }{r} H_1 \cdots H_{n-2}
 \ge - \frac{r}{d} (\mu_1 - \mu)(\mu - \mu_{l}).
\end{equation}
Moreover, if equality holds in \eqref{eq-Langer-inequality}, 
then we have$:$
\begin{enumerate}[label=$(\arabic*)$]
\item $l \le 2$, and $\widehat{\Delta}(\mathcal{G}_{i}^{\vee\vee}) H_1 \cdots H_{n-2} = 0$ for any $i =1,2$.
\item There exists $\lambda \in \Q$ such that 
$$
\frac{c_1(\mathcal{G}_1)}{r_1} - \frac{c_1(\mathcal{G}_2)}{r_2}
\equiv \lambda D.
$$
\item The natural morphism $\mathcal{G}_{2} \to \mathcal{G}_{2}^{\vee\vee}$ is an isomorphism in codimension two.
\end{enumerate}
\end{prop}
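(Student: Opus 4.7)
The plan is to apply Lemma \ref{lem-Langer-Theorem:5.1} to the $\alpha$-Harder-Narasimhan filtration (with $\alpha := D \cdot H_1 \cdots H_{n-2}$), bound the off-diagonal discriminant-type terms via the Hodge index theorem, discard the diagonal $\widehat{\Delta}(\mathcal{G}_i^{\vee\vee})$ contributions via the Bogomolov-Gieseker inequality, and close with the arithmetic Lemma \ref{lem-Langer-inequality}. Throughout the proof set $B_{ij} := \tfrac{c_1(\mathcal{G}_i)}{r_i} - \tfrac{c_1(\mathcal{G}_j)}{r_j}$.

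Concretely, Lemma \ref{lem-Langer-Theorem:5.1} yields
\begin{equation*}
\frac{\widehat{\Delta}(\mathcal{E})}{r} H_1 \cdots H_{n-2} \;\geq\; \sum_{i=1}^{l} \frac{\widehat{\Delta}(\mathcal{G}_i^{\vee\vee})}{r_i} H_1\cdots H_{n-2} \;-\; \frac{1}{r}\sum_{1 \le i<j \le l} r_i r_j\, B_{ij}^2\, H_1\cdots H_{n-2}.
\end{equation*}
Since each $\mathcal{G}_i$ is torsion-free and $\alpha$-semistable by construction, Langer's Bogomolov-Gieseker inequality on klt varieties (Proposition \ref{prop-Higgs-2} with trivial Higgs field, reached through an ample perturbation of $D$) gives $\widehat{\Delta}(\mathcal{G}_i^{\vee\vee}) H_1 \cdots H_{n-2} \geq 0$. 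For the quadratic term, observe that $B_{ij} \cdot D \cdot H_1 \cdots H_{n-2} = \mu_i - \mu_j$; combined with the hypothesis $d = D^2 H_1 \cdots H_{n-2} > 0$, Proposition \ref{prop-hodge-index-type}(1) yields the upper bound $d \cdot B_{ij}^2 H_1 \cdots H_{n-2} \leq (\mu_i - \mu_j)^2$. Substituting these two estimates and applying Lemma \ref{lem-Langer-inequality} to the arithmetic data $(r, r_i, \mu, \mu_i)$ produces the main inequality \eqref{eq-Langer-inequality}.

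For the equality case, assume $X$ is maximally quasi-\'etale and equality holds in \eqref{eq-Langer-inequality}. Then equality must hold simultaneously in each of the four intermediate steps above: in Lemma \ref{lem-Langer-Theorem:5.1}, in the Bogomolov-Gieseker bound for every $i$, in the Hodge index bound for every pair $i < j$, and in Lemma \ref{lem-Langer-inequality}. The Bogomolov-Gieseker equality forces $\widehat{\Delta}(\mathcal{G}_i^{\vee\vee}) H_1\cdots H_{n-2} = 0$ for all $i$, and the equality clause of Lemma \ref{lem-Langer-inequality} forces $l \le 2$; together these give conclusion (1). The Hodge index equality for the pair $(D, B_{12})$ combined with Proposition \ref{prop-hodge-index-type}(3) produces the required $\lambda \in \mathbb{Q}$ realizing conclusion (2). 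Lastly, the equality clause of Lemma \ref{lem-Langer-Theorem:5.1} forces each natural morphism $\mathcal{G}_i \to \mathcal{G}_i^{\vee\vee}$ to be an isomorphism in codimension two, which is conclusion (3).

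The most delicate technical point is the legitimate use of the Bogomolov-Gieseker inequality at the possibly non-ample polarization $\alpha = D \cdot H_1 \cdots H_{n-2}$, since Proposition \ref{prop-Higgs-2} is formulated for ample classes. I expect to handle this by the standard perturbation $D \mapsto D + \varepsilon H_k$ with $\varepsilon \to 0^+$ together with continuity of the intersection form, or by directly invoking Langer's extended Bogomolov-Gieseker inequality for movable classes; all other ingredients transfer verbatim from the smooth case via the $\mathbb{Q}$-Chern-class formalism already set up in Subsection \ref{Subsec-Chern}.
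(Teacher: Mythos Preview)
Your overall strategy is exactly the paper's: chain Lemma \ref{lem-Langer-Theorem:5.1}, the Bogomolov--Gieseker inequality for each $\mathcal{G}_i^{\vee\vee}$, the Hodge index bound of Proposition \ref{prop-hodge-index-type}, and Lemma \ref{lem-Langer-inequality}, then read off the equality conditions. Conclusions (1) and (3) are handled correctly.

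There is, however, a genuine gap in your derivation of conclusion (2). Proposition \ref{prop-hodge-index-type}(3) only gives you a rational number $\lambda$ with
\[
B_{12}\cdot L \cdot H_1\cdots H_{n-2} \;=\; \lambda\, D\cdot L\cdot H_1\cdots H_{n-2}\qquad\text{for every $\Q$-Weil divisor }L,
\]
i.e.\ numerical proportionality \emph{after} cupping with the fixed class $H_1\cdots H_{n-2}$. This is strictly weaker than the assertion $\frac{c_1(\mathcal{G}_1)}{r_1} - \frac{c_1(\mathcal{G}_2)}{r_2} = \lambda D$, which in the paper's framework means that the relevant determinant is a $\Q$-line bundle with the indicated first Chern class in $N^1(X)_\Q$. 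To bridge this gap you must invoke Lemma \ref{lem-Q-Cartier-of-Gi}: from Proposition \ref{prop-hodge-index-type}(3) you extract both $(B_{12}-\lambda D)\cdot H_1\cdots H_{n-1}=0$ and $(B_{12}-\lambda D)^2\cdot H_1\cdots H_{n-2}=0$, and then Lemma \ref{lem-Q-Cartier-of-Gi} (which is precisely where the maximally quasi-\'etale hypothesis enters, via \cite[Theorem 1.4]{LT18}) upgrades this to the honest equality. As written, your argument never uses the maximally quasi-\'etale assumption, which should be a warning sign.

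A minor remark on your ``delicate technical point'': note that the quantity $\widehat{\Delta}(\mathcal{G}_i^{\vee\vee})H_1\cdots H_{n-2}$ does not involve $D$ at all, so the content of the perturbation is only to guarantee that $(D\cdot H_1\cdots H_{n-2})$-semistability suffices for the Bogomolov--Gieseker inequality against $H_1\cdots H_{n-2}$; the paper simply cites this as known (ultimately from \cite{Lan22b}), and your proposed perturbation via \cite[Proposition 4.17]{GKPT19b} is an acceptable alternative.
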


\begin{proof}
To simplify notation, we set $\beta := H_1 \cdots H_{n-2}$. 
The semistability of $\mathcal{G}_{i}^{\vee\vee}$ implies that 
the Bogomolov-Gieseker inequality holds:
\begin{equation}
\label{eq-Bogomolov-Gieseker-klt}
\widehat{\Delta}(\mathcal{G}_{i}^{\vee\vee}) \beta \ge 0.
\end{equation}
The Hodge index theorem (see Proposition \ref{prop-hodge-index-type}) yields
\begin{equation}
\label{eq-Langer-Hodge}
\frac{1}{r} \sum_{1 \le i < j \le l} r_i r_j \left( \frac{c_1(\mathcal{G}_i)}{r_i} - \frac{c_1(\mathcal{G}_j)}{r_j} \right)^2 \beta \le \frac{1}{r d} \sum_{1 \le i < j \le l} r_i r_j (\mu_i - \mu_j)^2.
\end{equation}
Applying Lemmas \ref{lem-Langer-inequality} and \ref{lem-Langer-Theorem:5.1}, together with inequalities \eqref{eq-Langer-Theorem:5.1} and \eqref{eq-Langer-Hodge}, we obtain the first conclusion:
\begin{align}
\begin{split}
\label{eq-Langer-Delta1.5}
\frac{\widehat{\Delta}(\mathcal{E}) }{r} \beta &\underalign{\text{(by Lem. \ref{lem-Langer-Theorem:5.1})}}{\ge}  \quad  \sum_{i=1}^{l} \frac{\widehat{\Delta}(\mathcal{G}_{i}^{\vee\vee})}{r_i} \beta - \frac{1}{r} \sum_{1 \le i < j \le l} r_i r_j \left( \frac{c_1(\mathcal{G}_i)}{r_i} - \frac{c_1(\mathcal{G}_j)}{r_j} \right)^2 \beta \quad  \\
&\underalign{\text{(by \eqref{eq-Bogomolov-Gieseker-klt})}}{\ge} \quad -\frac{1}{r} \sum_{1 \le i < j \le l} r_i r_j \left( \frac{c_1(\mathcal{G}_i)}{r_i} - \frac{c_1(\mathcal{G}_j)}{r_j} \right)^2 \beta  \\
&\underalign{\text{(by \eqref{eq-Langer-Hodge})}}{\ge} \quad - \frac{1}{r d} \sum_{1 \le i < j \le l} r_i r_j (\mu_i - \mu_j)^2  \\
&\underalign{\text{(by Lem. \ref{lem-Langer-inequality})}}{\ge}  \quad - \frac{r}{d} (\mu_1 - \mu)(\mu - \mu_l).
\end{split}
\end{align}

For the latter conclusion, we assume that equality holds in \eqref{eq-Langer-inequality}.
Conclusions (1) and (3) directly follow from Lemmas \ref{lem-Langer-Theorem:5.1}, \ref{lem-Langer-inequality}, and \eqref{eq-Bogomolov-Gieseker-klt}. 
Moreover, the Hodge index theorem shows that 
$$
\left( \frac{c_1(\mathcal{G}_1)}{r_1} - \frac{c_1(\mathcal{G}_2)}{r_2} \right) \cdot L \cdot \beta = \lambda D \cdot L \cdot \beta
$$
for any Weil divisor $L$ on $X$. Hence, conclusion~(2) follows from  Lemma~\ref{lem-Q-Cartier-of-Gi}. 
\end{proof}

\subsection{Case of \texorpdfstring{$\nu(\det \mathcal{E}) \geq 2$}{}}\label{subsec-case1}

In this subsection, we describe the structure of the Harder-Narasimhan filtration 
for a generically nef reflexive sheaf $\mathcal{E}$ with nef determinant $\det \mathcal{E}$ 
in the case where $\nu(\det \mathcal{E}) \geq 2$.  
Here, a reflexive sheaf $\mathcal{E}$ is called \emph{generically nef} 
if $\mu_{H_1 \cdots H_{n-1}}^{\min}(\mathcal{E}) \geq 0$ for all $\mathbb{Q}$-ample Cartier divisors $H_1, \ldots, H_{n-1}$.

\begin{thm}[{cf.\,\cite[Corollary 3.5]{LL23}}]
\label{thm-Langer-Liu-Liu-inequality}
Let $H_{1}, \ldots, H_{n-2}$ be ample $\Q$-Cartier divisors on a projective klt variety $X$, and let $\mathcal{E}$ be a reflexive sheaf on $X$ satisfying the following conditions$:$
\begin{itemize}
\item $\mathcal{E}$ is generically nef. 
\item $\det \mathcal{E}$ is a nef $\Q$-line bundle. 
\item $\nu(\det \mathcal{E}) \geq 2$. 
\end{itemize}
Let $r_1$ be the rank of the maximal destabilizing subsheaf $\mathcal{G}_{1}:=\mathcal{E}_{\max}$ of $\mathcal{E}$ 
with respect to $c_1(\mathcal{E}) \cdot H_1 \cdots H_{n-2}$. Then, the following inequality holds$:$
\begin{equation}
\label{eq-Langer-Liu-Liu-inequality}
\widehat{c}_2(\mathcal{E}) H_1 \cdots H_{n-2} 
\ge \frac{r_1 -1}{2 r_1} c_1(\mathcal{E})^{2} H_1 \cdots H_{n-2}. 
\end{equation}

Moreover, if equality holds in \eqref{eq-Langer-Liu-Liu-inequality}, 
then the $(c_1(\mathcal{E}) \cdot H_1 \cdots H_{n-2})$-Harder-Narasimhan filtration of $\mathcal{E}$ is given by 
\[
0 \longrightarrow \mathcal{G}_1 = \mathcal{E}_{\max} 
\longrightarrow \mathcal{E} 
\longrightarrow \mathcal{G}_2 
\longrightarrow 0
\]
and satisfies the following properties$:$
\begin{enumerate}[label=$(\arabic*)$]
\item $\det \mathcal{G}_1$ is a $\Q$-line bundle such that $c_1(\mathcal{G}_1) = c_1(\mathcal{E})$ and $\widehat{\Delta}(\mathcal{G}_1) H_1 \cdots H_{n-2} = 0$. 
\item $c_1(\mathcal{G}_2) = 0$ and $\widehat{\Delta}(\mathcal{G}_{2}^{\vee\vee}) H_1 \cdots H_{n-2} = 0$ holds.
\item The natural morphism $\mathcal{G}_{2} \to \mathcal{G}_{2}^{\vee\vee}$ is an isomorphism in codimension two.
\end{enumerate}
In particular, if $X$ is maximally quasi-\'etale, then $\mathcal{G}_{2}^{\vee\vee}$ is a flat locally free sheaf.
\end{thm}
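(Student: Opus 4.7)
The plan is to apply Proposition \ref{prop-Langer-inequality} with the choice $D := c_1(\mathcal{E})$. The hypothesis $d := D^2 \cdot H_1 \cdots H_{n-2} > 0$ holds because $c_1(\mathcal{E})$ is nef (so the intersection is non-negative) and the hypothesis $\nu(\det \mathcal{E}) \geq 2$ combined with Proposition \ref{prop-vanishing-intersection-form} rules out vanishing. Generic nefness of $\mathcal{E}$, via approximation of the nef polarization $D\cdot H_1\cdots H_{n-2}$ by ample ones $(D+\varepsilon H)\cdot H_1\cdots H_{n-2}$ and continuity of the minimum slope, yields $\mu_l \geq 0$; hence all $\mu_i \geq 0$, and the identity $\sum_i r_i\mu_i = r\mu$ forces $\mu_1 \leq r\mu/r_1$. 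Therefore
\[
(\mu_1 - \mu)(\mu - \mu_l) \;\leq\; (\mu_1 - \mu)\mu \;\leq\; \frac{(r-r_1)\mu^2}{r_1},
\]
and substituting into \eqref{eq-Langer-inequality} with $\mu = d/r$ gives \eqref{eq-Langer-Liu-Liu-inequality} after a short algebraic manipulation.

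For the equality case, I would trace back through the above chain and observe that equality forces each intermediate inequality to be an equality. In particular $\mu_l = 0$ and $\mu_1 = r\mu/r_1$, so (combined with $\sum r_i \mu_i = r\mu$ and $\mu_i \geq 0$) one obtains $\mu_i = 0$ for every $i \geq 2$; the strict monotonicity of the HN slopes then forces $l \leq 2$. Equality in \eqref{eq-Langer-inequality} also triggers the ``moreover'' conclusions of Proposition \ref{prop-Langer-inequality}: the Bogomolov--Gieseker equalities $\widehat{\Delta}(\mathcal{G}_i^{\vee\vee}) \cdot H_1\cdots H_{n-2} = 0$, the existence of $\lambda \in \mathbb{Q}$ with $c_1(\mathcal{G}_1)/r_1 - c_1(\mathcal{G}_2)/r_2 = \lambda D$, and the isomorphism $\mathcal{G}_2 \to \mathcal{G}_2^{\vee\vee}$ in codimension two, which is exactly conclusion (3). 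Combining the proportionality with $c_1(\mathcal{G}_1) + c_1(\mathcal{G}_2) = c_1(\mathcal{E})$ and the vanishing $\mu_2 = 0$ (equivalently $c_1(\mathcal{G}_2)\cdot D\cdot H_1\cdots H_{n-2}=0$) pins down $\lambda = 1/r_1$ and forces $c_1(\mathcal{G}_2) \equiv 0$ and $c_1(\mathcal{G}_1) \equiv c_1(\mathcal{E})$ numerically. Conclusion (1) then follows from Lemma \ref{lem-Q-Cartier-of-Gi} applied to the rank one reflexive sheaf $\det \mathcal{G}_1$ together with a $\mathbb{Q}$-Cartier representative of $c_1(\mathcal{E})$, using that $X$ is maximally quasi-\'etale.

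For conclusion (2), the sheaf $\mathcal{G}_2^{\vee\vee}$ is semistable with $c_1 \equiv 0$ and $\widehat{\Delta}\cdot H_1\cdots H_{n-2} = 0$, so $\mathcal{H} := \mathcal{G}_2^{\vee\vee} \oplus \mathcal{O}_X$ with trivial Higgs field is a semistable Higgs sheaf of slope zero satisfying $\widehat{\Delta}(\mathcal{H})\cdot H_1\cdots H_{n-2} = 0$. Proposition \ref{prop-Higgs-2} then yields local freeness of $\End(\mathcal{H})$, and since $\mathcal{G}_2^{\vee\vee}$ appears as a direct summand of
\[
\End(\mathcal{H}) \;=\; \End(\mathcal{G}_2^{\vee\vee}) \oplus \mathcal{G}_2^{\vee\vee} \oplus (\mathcal{G}_2^{\vee\vee})^\vee \oplus \mathcal{O}_X,
\]
this forces $\mathcal{G}_2^{\vee\vee}$ itself to be locally free. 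The non-abelian Hodge correspondence argument in the proof of Proposition \ref{prop-Higgs-2} (producing a linear representation of $\pi_1(X_{\reg})$ and extending it to $\pi_1(X)$ via Theorem \ref{thm-representation-quasi-etale}), now applied directly to $\mathcal{G}_2^{\vee\vee}$, equips it with a flat structure. The main technical point I expect is precisely this promotion from local freeness of $\End$ to genuine flat local freeness of $\mathcal{G}_2^{\vee\vee}$ itself; the direct-summand trick with $\mathcal{H}$ above is the key device that makes it mechanical.
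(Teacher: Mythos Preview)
Your argument for the inequality and for conclusions (1) and (3) is correct and follows the same strategy as the paper: apply Proposition~\ref{prop-Langer-inequality} with $D = c_1(\mathcal{E})$, use generic nefness to obtain $\mu_l \ge 0$ and $r_1\mu_1 \le r\mu$, and then bound $(\mu_1-\mu)(\mu-\mu_l)$. Your derivation of $c_1(\mathcal{G}_2)\equiv 0$ via linear algebra from Proposition~\ref{prop-Langer-inequality}\,(2) is a slight variant of the paper's route (which instead combines $\mu_2=0$ with the Hodge index theorem), but both are equally valid.

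For conclusion (2) there is a small gap and an unnecessary detour. The Harder--Narasimhan filtration is taken with respect to the \emph{nef} polarization $c_1(\mathcal{E})\cdot H_1\cdots H_{n-2}$, so the semistability of $\mathcal{G}_2^{\vee\vee}$ you obtain is only for that polarization; Proposition~\ref{prop-Higgs-2}, however, requires an \emph{ample} polarization $H_1\cdots H_{n-1}$. The paper closes this gap by observing that $\mathcal{G}_2^{\vee\vee}$ is generically nef (as the reflexive hull of a quotient of the generically nef sheaf $\mathcal{E}$) with $c_1(\mathcal{G}_2^{\vee\vee})\equiv 0$, hence $\mu^{\min}_{H_1\cdot H_1\cdots H_{n-2}}(\mathcal{G}_2^{\vee\vee}) \ge 0 = \mu_{H_1\cdot H_1\cdots H_{n-2}}(\mathcal{G}_2^{\vee\vee})$, so $\mathcal{G}_2^{\vee\vee}$ is semistable for the ample polarization $H_1^2\cdot H_2\cdots H_{n-2}$. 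Once this is in place, the paper simply invokes \cite[Theorem~1.4]{LT18} to conclude that $\mathcal{G}_2^{\vee\vee}$ is flat and locally free, which is cleaner than your reconstruction via the direct-summand trick with $\mathcal{H} = \mathcal{G}_2^{\vee\vee}\oplus\mathcal{O}_X$ and Proposition~\ref{prop-Higgs-2}; your route does work after the same patch, but it rederives a special case of the cited theorem.
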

\begin{proof}
We consider the same situation as in Setup \ref{setup-torsionfree-Langer} in the case $D := c_1(\mathcal{E})$. 
Since $\mathcal{E}$ is generically nef, we have 
\begin{equation}
\label{eq-generically-nef}
r_1 \mu_1 \ge r \mu \quad \text{and} \quad \mu_{l} \ge 0.  
\end{equation}
From now on, we set 
$$d := c_1(\mathcal{E})^2 H_1 \cdots H_{n-2} \quad \text{and} \quad \beta := H_1 \cdots H_{n-2}. $$ 
Note that $0<d = c_1(\mathcal{E})^2 \beta = r \mu $ holds. 
Then, the desired inequality \eqref{eq-Langer-Liu-Liu-inequality} follows from 
\begin{align}
\begin{split}
\label{eq-Langer-Delta2.5-corrected}
\frac{\widehat{\Delta}(\mathcal{E}) }{r} \beta
& \underalign{\text{(by Prop.\ \ref{prop-Langer-inequality}})}{\ge}  
- \frac{r}{d} (\mu_1 - \mu)(\mu - \mu_l) \\
&\underalign{\text{(by \eqref{eq-generically-nef})}}{\ge}  
- \frac{r}{d} (\mu_1 - \mu) \mu \\
&\underalign{\text{(by \eqref{eq-generically-nef})}}{\ge}  
- \frac{r}{d} \left( \frac{r_1 \mu_1 - r \mu}{r_1} \right) \mu \\
&\underalign{}{=} 
- \frac{(r_1 \mu_1 - r \mu)}{r_1} \quad (\text{since } d = r \mu) \\
&\underalign{}{=} 
\left( \frac{1}{r} - \frac{1}{r_1} \right) c_{1}(\mathcal{E})^{2} \beta.
\end{split}
\end{align}
Assume that equality holds in \eqref{eq-Langer-Liu-Liu-inequality}.
Proposition \ref{prop-Langer-inequality} (1) shows that $l \le 2$. 
In the case $l=1$, by setting $\mathcal{G}_1 = \mathcal{E}$, we complete the proof.
Thus, we may assume that $l=2$. 

We now show that $\det \mathcal{G}_2$ is a $\Q$-line bundle with $c_1(\mathcal{G}_2) = 0$.
From \eqref{eq-generically-nef}, we can deduce that $\mu_{2} = 0$, 
and thus $c_1(\mathcal{G}_2) c_1(\mathcal{E}) \beta = 0$.
This implies that 
\begin{equation}
\label{eq-hodge-1}
\left( c_1(\mathcal{G}_1) + c_1(\mathcal{G}_2) \right) c_1(\mathcal{G}_2) \beta = 0.
\end{equation}
On the other hand, by Proposition \ref{prop-Langer-inequality} (2), we obtain
\begin{equation}
\label{eq-hodge-2}
\left( \frac{c_1(\mathcal{G}_1)}{r_1} - \frac{c_1(\mathcal{G}_2)}{r_2} \right) c_1(\mathcal{G}_2) \beta = 0.
\end{equation}
Subtracting \eqref{eq-hodge-2} from \eqref{eq-hodge-1}, we find that
$$
c_1(\mathcal{G}_1) c_1(\mathcal{G}_2) \beta = c_1(\mathcal{G}_{2})^2 \beta = 0.
$$
By applying the Hodge index theorem (see Proposition \ref{prop-hodge-index-type}) to $A = c_1(\mathcal{E})$ and $B = c_1(\mathcal{G}_2)$,
we deduce that $\det \mathcal{G}_2$ is a $\Q$-line bundle with $c_1(\mathcal{G}_2) = 0$ (see Lemma \ref{lem-Q-Cartier-of-Gi}). 
This shows that $\det \mathcal{G}_1$ is a $\Q$-line bundle and that $c_1(\mathcal{G}_1) = c_1(\mathcal{E})$. 
Moreover, Proposition~\ref{prop-Langer-inequality}\,(1) yields 
$\widehat{\Delta}(\mathcal{G}_{1}^{\vee\vee}) \cdot \beta = \widehat{\Delta}(\mathcal{G}_{2}^{\vee\vee}) \cdot \beta = 0$. 
This completes the proof of (1)--(3).

Assume further that $X$ is maximally quasi-\'etale. 
We now show that $\mathcal{G}_2$ is a flat locally free sheaf.
Since the quotients and reflexive hulls of generically nef sheaves remain generically nef,
we see that $\mathcal{G}_{2}^{\vee\vee}$ is generically nef. 
This implies that
$$
\mu_{H_1 \cdot H_1 \cdots H_{n-2} }^{\min}(\mathcal{G}_{2}^{\vee\vee}) = \mu_{H_1 \cdot H_1 \cdots H_{n-2} } (\mathcal{G}_{2}^{\vee\vee}) = 0,
$$
and that $\mathcal{G}_{2}^{\vee\vee}$ is $(H_1 \cdot H_1 \cdots H_{n-2})$-semistable.
Thus $\mathcal{G}^{\vee\vee}_{2}$ is a flat locally free sheaf by \cite[Theorem 1.4]{LT18}.
\end{proof}

The following corollary is a direct consequence of Theorem \ref{thm-Langer-Liu-Liu-inequality}. 
Note that the equality \eqref{eq-inequality-c2-nu2} is well known to experts 
(see \cite[Theorem~6.1]{Miy87}, \cite[Proposition~10.12]{K++}).

\begin{cor}[{cf.\,\cite{Miy87, K++}}]
\label{cor-inequality-c2-nu2}
Consider the same situation as in Theorem \ref{thm-Langer-Liu-Liu-inequality}. 
Then, the following inequality holds$:$
\begin{equation}
\label{eq-inequality-c2-nu2}
\widehat{c}_2(\mathcal{E}) H_1 \cdots H_{n-2}  \ge 0.
\end{equation}
Moreover, if equality holds in \eqref{eq-inequality-c2-nu2}, 
then the $(c_1(\mathcal{E}) \cdot H_1 \cdots H_{n-2})$-Harder-Narasimhan filtration of $\mathcal{E}$ is given by
\[
0 \longrightarrow \mathcal{G}_1 := \mathcal{E}_{\max} \longrightarrow \mathcal{E} \longrightarrow \mathcal{G}_2 \longrightarrow 0
\]
and satisfies the following properties$:$
\begin{enumerate}[label=$(\arabic*)$]
\item $\mathcal{G}_1$ is a $\Q$-line bundle with $c_1(\mathcal{G}_1) = c_1(\mathcal{E})$.
\item $\mathcal{G}_2$ is reflexive and satisfies that $c_1(\mathcal{G}_2) = 0$ and $\widehat{\Delta}(\mathcal{G}_{2}^{\vee\vee}) H_1 \cdots H_{n-2} = 0$.
\end{enumerate}
In particular, if $X$ is maximally quasi-\'etale, then $\mathcal{G}_{2}$ is a flat locally free sheaf.
\end{cor}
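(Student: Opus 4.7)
The plan is to derive Corollary \ref{cor-inequality-c2-nu2} as a reasonably direct consequence of Theorem \ref{thm-Langer-Liu-Liu-inequality}, with only a short additional argument to upgrade $\mathcal{G}_2$ from being reflexive-in-codimension-two to genuinely locally free. For the inequality \eqref{eq-inequality-c2-nu2}, I would simply observe that the right-hand side of \eqref{eq-Langer-Liu-Liu-inequality} is non-negative: since $\det\mathcal{E}$ is nef the self-intersection $c_1(\mathcal{E})^2 H_1\cdots H_{n-2}$ is $\geq 0$, and the coefficient $(r_1-1)/(2r_1)$ is $\geq 0$ for $r_1\geq 1$. Substituting into Theorem \ref{thm-Langer-Liu-Liu-inequality} yields $\widehat{c}_2(\mathcal{E}) H_1\cdots H_{n-2}\geq 0$.

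For the equality case, suppose $\widehat{c}_2(\mathcal{E})H_1\cdots H_{n-2}=0$ and $X$ is maximally quasi-\'etale. The hypothesis $\nu(\det\mathcal{E})\geq 2$, combined with Proposition \ref{prop-vanishing-intersection-form} applied to the nef class $c_1(\mathcal{E})$, forces the strict positivity $c_1(\mathcal{E})^2 H_1\cdots H_{n-2}>0$. Plugging this into Theorem \ref{thm-Langer-Liu-Liu-inequality}'s inequality leaves no room except $r_1=1$, and both sides of \eqref{eq-Langer-Liu-Liu-inequality} then vanish. Hence we are in the equality case of Theorem \ref{thm-Langer-Liu-Liu-inequality}, which immediately gives the Harder-Narasimhan filtration $0\to \mathcal{G}_1\to\mathcal{E}\to\mathcal{G}_2\to 0$, the fact that $\mathcal{G}_1$ is a $\Q$-line bundle with $c_1(\mathcal{G}_1)=c_1(\mathcal{E})$ (property $(1)$ of the corollary), and that $\mathcal{G}_2^{\vee\vee}$ is flat and locally free, with $\mathcal{G}_2\to\mathcal{G}_2^{\vee\vee}$ an isomorphism in codimension two.

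It remains to promote $\mathcal{G}_2$ itself to a flat locally free sheaf, which I would do by invoking Lemma \ref{lem-(co)tangent-extension-of-proj-flat} on $0\to\mathcal{G}_1\to\mathcal{E}\to\mathcal{G}_2\to 0$. All hypotheses are easy to verify: $\mathcal{G}_1$ has rank one, so $\widehat{\Delta}(\mathcal{G}_1)=0$ holds trivially and $\mathcal{G}_1$ is numerically projectively flat; $\mathcal{G}_2^{\vee\vee}$ is flat locally free, hence numerically projectively flat with vanishing $\widehat{c}_1$ and $\widehat{c}_2$; $\det\mathcal{G}_1$ is a $\Q$-line bundle by Theorem \ref{thm-Langer-Liu-Liu-inequality}$(1)$; and from $c_1(\mathcal{G}_1)=c_1(\mathcal{E})$ we deduce $c_1(\mathcal{G}_2)=0$, so Lemma \ref{lem-Q-Cartier-of-Gi} applied with the trivial divisor $D=0$ shows $\det\mathcal{G}_2$ is also a $\Q$-line bundle. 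The required Chern-class identity
\[
\bigl(\widehat{c}_2(\mathcal{E})-\widehat{c}_2(\mathcal{G}_1)-\widehat{c}_2(\mathcal{G}_2^{\vee\vee})-c_1(\mathcal{G}_1)c_1(\mathcal{G}_2)\bigr)H_1\cdots H_{n-2}=0
\]
then holds term by term from the vanishings above. Lemma \ref{lem-(co)tangent-extension-of-proj-flat} concludes that $\mathcal{G}_2\cong\mathcal{G}_2^{\vee\vee}$, so $\mathcal{G}_2$ is flat locally free, yielding property $(2)$.

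I do not expect a serious obstacle here; the deep work has already been absorbed into Theorem \ref{thm-Langer-Liu-Liu-inequality} and Lemma \ref{lem-(co)tangent-extension-of-proj-flat}. The only step requiring a moment of care is the passage from the equality $\widehat{c}_2(\mathcal{E})H_1\cdots H_{n-2}=0$ to the strong conclusion $r_1=1$, and this hinges entirely on the strict positivity $c_1(\mathcal{E})^2 H_1\cdots H_{n-2}>0$ supplied by the hypothesis $\nu(\det\mathcal{E})\geq 2$.
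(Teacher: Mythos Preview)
Your proposal is correct and follows essentially the same approach as the paper's own proof: deduce $r_1=1$ and the equality case of Theorem \ref{thm-Langer-Liu-Liu-inequality} from the strict positivity of $c_1(\mathcal{E})^2 H_1\cdots H_{n-2}$, then apply Lemma \ref{lem-(co)tangent-extension-of-proj-flat} to upgrade $\mathcal{G}_2$ to a reflexive (hence flat locally free) sheaf. Your write-up is in fact more explicit than the paper's two-line proof in spelling out why $r_1=1$ and in verifying the hypotheses of Lemma \ref{lem-(co)tangent-extension-of-proj-flat}; note that the fact that $\det\mathcal{G}_2$ is a $\Q$-line bundle is already established inside the proof of Theorem \ref{thm-Langer-Liu-Liu-inequality}, so your separate appeal to Lemma \ref{lem-Q-Cartier-of-Gi} is sound but slightly redundant.
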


\begin{proof}
The sheaves $\mathcal{G}_{1}$ and $\mathcal{G}_{2}^{\vee\vee}$ satisfy the
assumptions of Lemma \ref{lem-(co)tangent-extension-of-proj-flat}, 
and thus $\mathcal{G}_{2}$ is reflexive. 
Then, the desired conclusion follows from Theorem \ref{thm-Langer-Liu-Liu-inequality}. 
\end{proof}

\subsection{Case of \texorpdfstring{$\nu(\det \mathcal{E}) \leq 1$}{}}\label{subsec-case2}
In this subsection, we describe the structure of the Harder-Narasimhan filtration in the case where $\nu(\det \mathcal{E}) \leq 1$.

The following theorem generalizes \cite[Lemma 4.5]{Cao13} and \cite[Subsection 6.2]{Ou17}, initially established for smooth varieties, to varieties with klt singularities. 
Note that, as with the equality \eqref{eq-inequality-c2-nu2}, the inequality \eqref{eq-c2-inequality-nu1} is also well known (see \cite[Theorem~6.1]{Miy87} and \cite[Proposition~10.12]{K++}).

\begin{thm}[{cf.\, \cite{Miy87, K++, Cao13, Ou17}}]
\label{thm-c2-inequality-nu1}
Let $H, H_{1}, \ldots, H_{n-2}$ be ample $\Q$-Cartier divisors on a projective klt variety $X$, and let $\mathcal{E}$ be a reflexive sheaf on $X$ satisfying the following conditions. 
\begin{itemize}
\item $\mathcal{E}$ is generically nef. 
\item $\det \mathcal{E}$ is a nef $\Q$-line bundle. 
\item $c_1(\mathcal{E})^2 H_1 \cdots H_{n-2}  = 0$, equivalently, $\nu(\det \mathcal{E}) \leq 1$. 
\end{itemize}
Then, the following inequality holds$:$
\begin{equation}
\label{eq-c2-inequality-nu1}
\widehat{c}_2(\mathcal{E}) H_1 \cdots H_{n-2}  \ge 0.
\end{equation}

Moreover, if equality  holds in \eqref{eq-c2-inequality-nu1}, then there exists a positive rational number $\varepsilon_0$ such that for any rational number $0 < \varepsilon < \varepsilon_0$, the $((c_1(\mathcal{E})+\varepsilon H) \cdot H_1 \cdots H_{n-2})$-Harder-Narasimhan filtration of $\mathcal{E}$ 
\begin{equation}
    \label{eq-HN-filtration-5}
0 =: \mathcal{E}_0 \subsetneq \mathcal{E}_1 \subsetneq \ldots \subsetneq \mathcal{E}_l := \mathcal{E}
\end{equation}
is independent of $\varepsilon$ and  satisfies the following properties$:$
\begin{enumerate}[label=$(\arabic*)$]
\item $\mathcal{G}_i := \mathcal{E}_i / \mathcal{E}_{i-1}$ is a reflexive and numerically projectively flat sheaf.
\item $\det(\mathcal{G}_i)$ is a $\Q$-line bundle and $c_1(\mathcal{G}_i) = \lambda_i c_1(\mathcal{E})$ for some $\lambda_i \in \Q$.
\item The filtration \eqref{eq-HN-filtration-5} is locally analytically split. 
In particular, for any $i$, the following sequence is exact
\begin{align*}
0 \rightarrow \mathcal{G}_i^\vee \rightarrow \mathcal{E}_i^\vee \rightarrow \mathcal{E}_{i-1}^\vee \rightarrow 0.
\end{align*}

\end{enumerate}
\end{thm}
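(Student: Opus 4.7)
The plan is to reduce to Proposition~\ref{prop-Langer-inequality} by perturbing the degenerate class $c_1(\mathcal{E})$ with a small ample multiple. Set $D_\varepsilon := c_1(\mathcal{E}) + \varepsilon H$ for rational $\varepsilon > 0$ and $\alpha_\varepsilon := D_\varepsilon \cdot H_1 \cdots H_{n-2}$; since $c_1(\mathcal{E})^2 H_1\cdots H_{n-2}=0$ while $c_1(\mathcal{E})\cdot H\cdot H_1\cdots H_{n-2}\geq 0$ by nefness, $d_\varepsilon := D_\varepsilon^2 H_1\cdots H_{n-2} = 2\varepsilon\, c_1(\mathcal{E})\cdot H\cdot H_1\cdots H_{n-2} + O(\varepsilon^2)>0$ for small $\varepsilon$. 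For the $\alpha_\varepsilon$-Harder-Narasimhan filtration of $\mathcal{E}$, generic nefness applied to the ample class $D_\varepsilon$ yields $\mu_l(\varepsilon)\geq 0$, while $r\mu(\varepsilon)=\sum r_i\mu_i(\varepsilon)$ combined with monotonicity of slopes gives $\mu_1(\varepsilon)-\mu(\varepsilon)\leq \tfrac{r-r_1}{r_1}\mu(\varepsilon)$. Since $\mu(\varepsilon)=\tfrac{\varepsilon}{r}c_1(\mathcal{E})\cdot H\cdot H_1\cdots H_{n-2}=O(\varepsilon)$, the product $(r/d_\varepsilon)(\mu_1-\mu)(\mu-\mu_l)$ is $O(\varepsilon)$, so Proposition~\ref{prop-Langer-inequality} delivers $\widehat{\Delta}(\mathcal{E})H_1\cdots H_{n-2}\geq -O(\varepsilon)$. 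Taking $\varepsilon\to 0$ and invoking $c_1(\mathcal{E})^2 H_1\cdots H_{n-2}=0$ proves (1).

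For the equality case, a standard wall-and-chamber argument (the potential destabilizers of $\mathcal{E}$ have Chern classes bounded in terms of the fixed data, so only finitely many walls meet the one-parameter family $\{D_\varepsilon\}$) produces $\varepsilon_0>0$ for which the $\alpha_\varepsilon$-Harder-Narasimhan filtration $0=\mathcal{E}_0\subsetneq\cdots\subsetneq\mathcal{E}_l=\mathcal{E}$ is independent of $\varepsilon\in(0,\varepsilon_0)\cap\mathbb{Q}$. The key observation is that generic nefness forces $\mu_l(0)\geq 0$; together with $\mu_1(0)\geq\cdots\geq\mu_l(0)\geq 0$ and $\sum r_i\mu_i(0)=r\mu(0)=0$, this forces $\mu_i(0)=0$ for every $i$, that is, $c_1(\mathcal{G}_i)\cdot c_1(\mathcal{E})\cdot H_1\cdots H_{n-2}=0$. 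Under the equality hypothesis $\widehat{\Delta}(\mathcal{E})H_1\cdots H_{n-2}=0$, the chain of inequalities in Lemma~\ref{lem-Langer-Theorem:5.1} must collapse in the limit $\varepsilon\to 0$, forcing for each $i$ and each pair $i<j$:
\[
\widehat{\Delta}(\mathcal{G}_i^{\vee\vee})H_1\cdots H_{n-2}=0, \qquad \left(\tfrac{c_1(\mathcal{G}_i)}{r_i}-\tfrac{c_1(\mathcal{G}_j)}{r_j}\right)^{\!2} H_1\cdots H_{n-2}=0,
\]
together with each $\mathcal{G}_i\to\mathcal{G}_i^{\vee\vee}$ being an isomorphism in codimension two.

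The structural assertions then follow. Proposition~\ref{prop-hodge-index-type}(3), applied with $A=c_1(\mathcal{E})$ and $B=c_1(\mathcal{G}_i)-\tfrac{r_i}{r}c_1(\mathcal{E})$ (all three of $A^2$, $B^2$, and $A\cdot B$ vanish against $H_1\cdots H_{n-2}$ by the above), yields $c_1(\mathcal{G}_i)=\lambda_i c_1(\mathcal{E})$ as linear forms against $L\cdot H_1\cdots H_{n-2}$ for some $\lambda_i\in\mathbb{Q}$; Lemma~\ref{lem-Q-Cartier-of-Gi} then promotes $\det\mathcal{G}_i$ to a genuine $\mathbb{Q}$-line bundle with this class, giving (2). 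Combined with $\widehat{\Delta}(\mathcal{G}_i^{\vee\vee})H_1\cdots H_{n-2}=0$ and semistability, $\mathcal{G}_i^{\vee\vee}$ is numerically projectively flat by definition. For the local analytic splitting in (3), I plan to apply Lemma~\ref{lem-(co)tangent-extension-of-proj-flat} iteratively. The base case $0\to\mathcal{G}_1\to\mathcal{E}_2\to\mathcal{G}_2\to 0$ satisfies the hypotheses directly, yielding splitting and reflexivity of $\mathcal{G}_2$; for higher $i$ the intermediate $\mathcal{E}_{i-1}$ is not itself semistable, so I plan to pass (analytically locally) to a common finite quasi-\'etale cover on which each $\mathcal{G}_i$ becomes a direct sum of $\mathbb{Q}$-line bundles via Lemma~\ref{lem-Proj-flat-bundles-are-Q-vectorbundles}, settle the filtration in that line-bundle setting where the Chern class vanishings identified above supply the hypotheses, and descend to $X$ via Galois averaging. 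The exactness of the dual sequence is then an immediate consequence.

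The main obstacle is the degeneration $\varepsilon\to 0$: Langer's inequality and the Harder-Narasimhan formalism require a polarization with positive self-intersection, while the equality $\widehat{c}_2(\mathcal{E})H_1\cdots H_{n-2}=0$ is tethered to the boundary $\varepsilon=0$. Careful control of the $O(\varepsilon)$ error terms and the wall-crossing argument producing a uniform filtration form the technical heart of the proof. A secondary obstacle is the analytic local splitting of the entire filtration in (3): since the intermediate sheaves $\mathcal{E}_i$ fail to be numerically projectively flat, Lemma~\ref{lem-(co)tangent-extension-of-proj-flat} does not apply naively to the higher successive extensions, and one is forced to descend from quasi-\'etale covers on which the graded pieces become direct sums of $\mathbb{Q}$-line bundles.
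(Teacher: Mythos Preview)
Your proof is correct, but the route to the inequality differs from the paper's, and your treatment of the equality case carries some unnecessary baggage.

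For the inequality, the paper does not pass through Proposition~\ref{prop-Langer-inequality} at all. Instead it fixes the uniform filtration from the outset (citing \cite[Theorem~2.2(3)]{Miy87}, exactly your wall-and-chamber argument) and applies Lemma~\ref{lem-Langer-Theorem:5.1} directly against $\beta = H_1\cdots H_{n-2}$, which does not require $d>0$. The two nonnegative contributions on the right-hand side are then $\sum_i \widehat{\Delta}(\mathcal{G}_i^{\vee\vee})\beta/r_i \geq 0$ by Bogomolov--Gieseker and $-\tfrac{1}{r}\sum_{i<j} r_ir_j\big(\tfrac{c_1(\mathcal{G}_i)}{r_i}-\tfrac{c_1(\mathcal{G}_j)}{r_j}\big)^2\beta \geq 0$ by Proposition~\ref{prop-hodge-index-type}(2), the latter using the observation (which you also make) that generic nefness and $c_1(\mathcal{E})^2\beta=0$ force $c_1(\mathcal{G}_i)\cdot c_1(\mathcal{E})\cdot\beta = 0$. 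This gives $\widehat{\Delta}(\mathcal{E})\beta\geq 0$ with no $\varepsilon$-limit. Your detour through Proposition~\ref{prop-Langer-inequality} works, but the asymptotic bookkeeping is avoidable.

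For the equality case your argument is essentially the paper's, but the phrase ``collapse in the limit $\varepsilon\to 0$'' is misleading: once the filtration is fixed, Lemma~\ref{lem-Langer-Theorem:5.1} and the Hodge index step are computed against $\beta$ alone and involve no $\varepsilon$. Equality then forces each term to vanish on the nose. The paper's route to property~(2) goes through $A=\tfrac{c_1(\mathcal{G}_1)}{r_1}-\tfrac{c_1(\mathcal{G}_j)}{r_j}$, $B=c_1(\mathcal{E})$ in Proposition~\ref{prop-hodge-index-type}(3) and then sums; your choice $B=c_1(\mathcal{G}_i)-\tfrac{r_i}{r}c_1(\mathcal{E})$ also works, but note that $B^2\beta = c_1(\mathcal{G}_i)^2\beta=0$ is not immediate---it follows from the identity $\sum_{i<j} r_ir_j\big(\tfrac{c_1(\mathcal{G}_i)}{r_i}-\tfrac{c_1(\mathcal{G}_j)}{r_j}\big)^2\beta = r\sum_i \tfrac{c_1(\mathcal{G}_i)^2}{r_i}\beta - c_1(\mathcal{E})^2\beta$ together with $c_1(\mathcal{G}_i)^2\beta\leq 0$. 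Finally, your diagnosis of the obstacle in~(3) is accurate: $\mathcal{E}_{i-1}$ is not numerically projectively flat, so Lemma~\ref{lem-(co)tangent-extension-of-proj-flat} does not apply verbatim to the successive extensions. The paper says only ``applying Lemma~\ref{lem-(co)tangent-extension-of-proj-flat} repeatedly,'' and your plan---pass locally to a common quasi-\'etale cover trivializing the $\mathcal{G}_i$ via Lemma~\ref{lem-Proj-flat-bundles-are-Q-vectorbundles}, then descend by Galois averaging---is precisely the content of that lemma's proof and is the correct way to make this step rigorous.
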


\begin{proof}
Set $H_{\varepsilon} := c_1(\mathcal{E}) + \varepsilon H$ for any $\varepsilon > 0$ and define $\beta := H_1 \cdots H_{n-2}$.
By \cite[Theorem 2.2 (3)]{Miy87}, there exists a positive number $\varepsilon_0 > 0$ such that the $(H_{\varepsilon} \cdot H_1 \cdots H_{n-2})$-Harder-Narasimhan filtration
$$
0 =: \mathcal{E}_0 \subsetneq \mathcal{E}_1 \subsetneq \ldots \subsetneq \mathcal{E}_l := \mathcal{E}
$$
is independent of any positive rational number $\varepsilon < \varepsilon_0$.

Since $\mathcal{G}_{i}^{\vee\vee}$ is $(H_{\varepsilon} \cdot H_1 \cdots H_{n-2})$-semistable, the sheaf $\mathcal{G}_{i}^{\vee\vee}$ satisfies the Bogomolov-Gieseker inequality 
\begin{equation}
\label{eq-Bogomolov-Gieseker-klt-nu1}
\widehat{\Delta}(\mathcal{G}_{i}^{\vee\vee}) \beta \ge 0.
\end{equation}
By the assumption $c_1(\mathcal{E})^2 \beta = 0$, we have
$$
\sum_{i=1}^{l} c_1(\mathcal{G}_i) c_1(\mathcal{E}) \beta = c_1(\mathcal{E})^2 \beta = 0.
$$
Since $\mathcal{E}$ is generically nef, each term $c_1(\mathcal{G}_{i}) c_1(\mathcal{E}) \beta$ on the left-hand side is non-negative.
This implies that $c_1(\mathcal{G}_i) c_1(\mathcal{E}) \beta = 0$.
By applying the Hodge index theorem (see part $(2)$ of Proposition \ref{prop-hodge-index-type}) to $A = (1/r_{i}) c_1(\mathcal{G}_i) - (1/r_{j}) c_1(\mathcal{G}_j)$ and $B = c_1(\mathcal{E})$, we obtain
\begin{equation}
\label{eq-square-negative}
\left( \frac{c_1(\mathcal{G}_i)}{r_i} - \frac{c_1(\mathcal{G}_j)}{r_j} \right)^2 \beta \le 0.
\end{equation}
The desired inequality \eqref{eq-c2-inequality-nu1} follows from
\begin{align}
\begin{split}
\label{eq-Langer-Delta2-c2}
2 \widehat{c}_2(\mathcal{E}) \beta
= \frac{\widehat{\Delta}(\mathcal{E})}{r} \beta
& \underalign{\text{(by Lem. \ref{lem-Langer-Theorem:5.1})}}{\ge} \quad \sum_{i=1}^{l} \frac{\widehat{\Delta}(\mathcal{G}_{i}^{\vee\vee})}{r_i} \beta - \frac{1}{r} \sum_{1 \le i < j \le l} r_i r_j \left( \frac{c_1(\mathcal{G}_i)}{r_i} - \frac{c_1(\mathcal{G}_j)}{r_j} \right)^2 \beta \\
& \underalign{\text{(by \eqref{eq-Bogomolov-Gieseker-klt-nu1})}}{\ge} \quad - \frac{1}{r} \sum_{1 \le i < j \le l} r_i r_j \left( \frac{c_1(\mathcal{G}_i)}{r_i} - \frac{c_1(\mathcal{G}_j)}{r_j} \right)^2 \beta \\
& \underalign{\text{(by \eqref{eq-square-negative})}}{\ge} \quad  0.
\end{split}
\end{align}

For the latter conclusion, assume that $\widehat{c}_2(\mathcal{E}) \beta = 0$. 
We first verify property (2) for $\mathcal{G}_1$. By applying Proposition \ref{prop-hodge-index-type} and Lemma \ref{lem-Q-Cartier-of-Gi} to \eqref{eq-square-negative} (as in the proof of Proposition \ref{prop-Langer-inequality} (2)), we can find rational numbers $\lambda_{j}$ such that
$$
\frac{c_1(\mathcal{G}_1)}{r_1} - \frac{c_1(\mathcal{G}_j)}{r_j} = \lambda_{j} c_1(\mathcal{E}).
$$
Setting $\lambda := r \sum_{j=2}^{l} \frac{r_j}{r_1} \lambda_{j} + 1$, we obtain $c_1(\mathcal{G}_1) = \lambda c_1(\mathcal{E})$. By applying Lemma \ref{lem-Q-Cartier-of-Gi} again, we see that $\det \mathcal{G}_1$ is a $\Q$-line bundle. The same proof applies for $\mathcal{G}_i$, so we omit the details.

Finally, we confirm properties (1) and (3). By \eqref{eq-Bogomolov-Gieseker-klt-nu1}, $\widehat{c}_2(\mathcal{E}) \beta = 0$, and property (2), we deduce that $\widehat{\Delta}(\mathcal{G}_{i}^{\vee\vee}) \beta = 0$, which establishes property (1). Applying Lemma \ref{lem-(co)tangent-extension-of-proj-flat} repeatedly yields property (3).
\end{proof}

\section{Proof of Theorem \ref{thm-main1}}\label{Sec-proof}

\subsection{Case of \texorpdfstring{$\nu(K_{X}) \leq 1$}{}}\label{Ssec-proof}

In this subsection, we first establish Theorem \ref{thm-main1}~(A), 
and then prove Theorem \ref{thm-main1} in the case where $\nu(K_{X}) \leq 1$.

\begin{proof}[Proof of Theorem \ref{thm-main1}~$(A)$ in the case $\nu(K_{X}) \leq 1$]
By \cite[Corollary~6.4]{Miy87} and \cite{CKT21}, the cotangent sheaf $\Omega^{[1]}_X$ is generically nef. 
Therefore, Miyaoka's inequality
\begin{equation*}
    \big( 3\widehat{c}_2(\Omega_{X}^{[1]}) - c_1(\Omega_{X}^{[1]})^2 \big) H_1 \cdots H_{n-2} \geq 0
\end{equation*}
follows from Theorem \ref{thm-c2-inequality-nu1}. 
\end{proof}

Next, we verify Theorem \ref{thm-main1}~(B) in the case $\nu(K_{X})=0$, which is a direct consequence of \cite{LT18}.

\begin{proof}[Proof of Theorem \ref{thm-main1}~$(B)$ in the case $\nu(K_{X})=0$]
Assume that $\nu(K_{X})=0$. 
By \cite[Theorem 1.2]{LT18}, we conclude that $X$ is a quasi-\'etale quotient of an abelian variety, which completes the proof.
\end{proof}

Hereafter, in this subsection, we focus on the case $\nu(K_{X})=1$. 
Our first goal is to prove that $K_{X}$ is semi-ample (see Theorem \ref{thm-Abundance-theorem-nu1}).
Our argument is strongly influenced by the approach of Lazi\'c-Peternell \cite{LP18} to the non-vanishing problem in the case $\nu(K_X)=1$.

\begin{thm}\label{thm-Abundance-theorem-nu1}
Let $X$ be a projective klt variety as in Theorem \ref{thm-main1}. 
Assume that $\nu(K_{X}) = 1$.  
Then, the canonical divisor $K_{X}$ is semi-ample.
\end{thm}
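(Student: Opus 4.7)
First, I would reduce to the case where $X$ is maximally quasi-\'etale: by Subsection \ref{subsec-qec}, such a cover always exists, and semi-ampleness of $K_X$ is preserved and reflected by finite quasi-\'etale covers (one descends pluricanonical sections via Galois invariants). The hypothesis $\nu(K_X) = 1$ together with Proposition \ref{prop-vanishing-intersection-form} implies $K_X^{2} \cdot H_1 \cdots H_{n-2} = 0$ for all ample $\mathbb{Q}$-Cartier divisors $H_i$, and Miyaoka's equality then forces $\widehat{c}_2(\Omega_X^{[1]}) \cdot H_1 \cdots H_{n-2} = 0$. Since $\Omega_X^{[1]}$ is generically nef by \cite{Miy87, CKT21}, I would apply Theorem \ref{thm-c2-inequality-nu1} to $\mathcal{E} = \Omega_X^{[1]}$ in the equality case.

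Second, I would exploit the structure of the Harder-Narasimhan filtration produced by Theorem \ref{thm-c2-inequality-nu1}: after a small polarization perturbation, there is a filtration
\[
0 = \mathcal{E}_0 \subsetneq \mathcal{E}_1 \subsetneq \cdots \subsetneq \mathcal{E}_l = \Omega_X^{[1]}
\]
whose graded pieces $\mathcal{G}_i := \mathcal{E}_i / \mathcal{E}_{i-1}$ are reflexive and numerically projectively flat, with $\det\mathcal{G}_i$ a $\mathbb{Q}$-line bundle and $c_1(\mathcal{G}_i) = \lambda_i K_X$ for some $\lambda_i \in \mathbb{Q}$. Because the average slope of $\Omega_X^{[1]}$ with respect to $(K_X + \varepsilon H) \cdot H_1 \cdots H_{n-2}$ is strictly positive (as $K_X \cdot H \cdot H_1 \cdots H_{n-2} > 0$), the maximal destabilizing piece $\mathcal{G}_1$ satisfies $\lambda_1 > 0$. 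Taking the reflexive determinant, I would obtain a sub-$\mathbb{Q}$-line bundle $\mathcal{L} := \det\mathcal{G}_1 \hookrightarrow \Omega_X^{[r_1]}$ with $c_1(\mathcal{L}) = \lambda_1 K_X$ pseudo-effective of numerical dimension $1$.

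Third, following the strategy of \cite{IM22}, I would invoke Campana's theory of special varieties, adapted to klt pairs. Campana-P\u{a}un type positivity results, applied to the pseudo-effective subsheaf $\mathcal{L} \subseteq \Omega_X^{[r_1]}$, would upgrade its numerical dimension to Kodaira dimension, yielding $\kappa(\mathcal{L}) \geq 1$ and hence showing that $X$ is not special. Campana's core map would then produce a non-trivial orbifold fibration $f\colon X \dashrightarrow (C, \Delta_C)$ onto an orbifold of general type, and the constraint $\nu(K_X) = 1$ forces $\dim C = 1$. Combined with the canonical bundle formula and the numerical triviality of $K_X$ along the fibers of $f$ (as dictated by $\nu(K_X) = 1$), this identifies $K_X$ numerically with $f^{*}(K_C + \Delta_C)$; semi-ampleness of $K_C + \Delta_C$ for an orbifold curve of general type then lifts to semi-ampleness of $K_X$ via \cite{Hor13}.

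The hardest step is the third: one must carefully justify in the klt setting (a) the upgrade from numerical to Kodaira dimension for the sub-line bundle $\mathcal{L}$, (b) the construction and geometric structure of Campana's core fibration, and (c) the numerical identification of $K_X$ as a pullback from the orbifold base curve. These rely on Campana-P\u{a}un type positivity, orbifold Bogomolov-Sommese inequalities, and canonical bundle formulas for klt fibrations; their careful extension from the smooth setting of \cite{IM22} to the klt setting constitutes the bulk of the technical work.
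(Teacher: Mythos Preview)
Your first two steps are essentially the same as the paper's, and the overall architecture---reduce to maximally quasi-\'etale, apply Theorem \ref{thm-c2-inequality-nu1}, then pass through Campana's theory of special varieties---is correct. But Step 3 contains a genuine gap.

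The assertion that ``Campana--P\u{a}un type positivity results \ldots\ would upgrade the numerical dimension of $\mathcal{L} = \det\mathcal{G}_1$ to Kodaira dimension'' is unjustified and, as stated, not known. The sheaf $\mathcal{L}$ sits in $\Omega_X^{[r_1]}$ with $\nu(\mathcal{L}) = 1$; the relevant Bogomolov-type constraints on special varieties (e.g.\ \cite[Theorem~A]{PRT22}) only apply to line bundles $\mathcal{L} \subset \Omega^p$ with $\nu(\mathcal{L}) \geq p$. When $r_1 > 1$ this yields nothing, and even if one could show $\kappa(\mathcal{L}) = 1$, this would still not produce a Bogomolov sheaf (which requires $\kappa = p$), so ``$X$ is not special'' does not follow from your argument.

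The paper's key additional idea is to argue by contradiction. Assuming a resolution $\widetilde{X}$ \emph{is} special, Campana \cite[Theorem~7.8]{Cam04} forces every linear representation of $\pi_1(X) \cong \pi_1(\widetilde{X})$ to have virtually abelian image. Separately, Corollary \ref{cor-locfree} shows $\mathcal{E}_1$ is locally free and projectively flat; then, after a finite \'etale cover, \cite[Proposition~3.3]{GKP21} refines $\mathcal{E}_1$ by a filtration with rank-one flat quotients. The bottom piece gives a genuine sub-line bundle of $\Omega_X^{[1]}$ (not $\Omega_X^{[r_1]}$) with numerical dimension $1$, which after pullback contradicts \cite[Theorem~A]{PRT22}. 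This ``use specialness to linearize the projectively flat bundle $\mathcal{E}_1$ and thereby land in $\Omega^{1}$'' step is the idea missing from your proposal.

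Once non-specialness is established, the paper's endgame is also more direct than your core-map and canonical-bundle-formula route: it takes a Bogomolov sheaf $\mathcal{L} \subset \Omega^p_{\widetilde{X}}(\log\lceil G\rceil)$, uses \cite{CP19} for pseudo-effectivity of the quotient determinant, pushes forward via the $\mathbb{Q}$-factoriality established in Lemma \ref{lem-Harder-Narasimhan-case-v=1}, and applies \cite[Corollary~6.2]{LP18} to conclude $\kappa(K_X) \geq p \geq 1$ immediately.
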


\begin{proof}
The proof of Theorem \ref{thm-Abundance-theorem-nu1} involves an in-depth analysis of the Harder-Narasimhan filtration of the cotangent sheaf $\Omega^{[1]}_X$, which was studied in Section \ref{section:3}.
Since $\nu(K_{X})=1$, our assumption on Miyaoka's equality is equivalent to 
$$
\widehat{c}_2(\Omega_{X}^{[1]})  H_1 \cdots H_{n-2} = 0.
$$
It is enough to prove that $K_{X}$ is semi-ample after replacing $X$ with its finite quasi-\'etale cover. 
Thus, we may assume that $X$ is maximally quasi-\'etale (see \cite[Theorem~1.14]{GKP16b}). 

Lemma \ref{lem-Harder-Narasimhan-case-v=1}, which is essential for the proof of Theorem \ref{thm-Abundance-theorem-nu1}, is an extension of the main result of \cite{IM22}.

\begin{lem}\label{lem-Harder-Narasimhan-case-v=1}
Let $X$ be a projective klt variety as in Theorem \ref{thm-main1}. 
Assume that $\nu(K_{X}) = 1$ and that $X$ is maximally quasi-\'etale. 
Then, the following statements hold$:$
\begin{enumerate}[label=$(\arabic*)$]
\item $X$ has  finite quotient singularities and is $\mathbb{Q}$-factorial. 
\item No resolution $\pi \colon \widetilde{X} \to X$ is of special type in the sense of Campana.  
\end{enumerate}
\end{lem}

\begin{proof}[Proof of Lemma \ref{lem-Harder-Narasimhan-case-v=1}]
By \cite[Corollary~6.4]{Miy87} and \cite{CKT21}, the cotangent sheaf $\Omega^{[1]}_X$ is generically nef. 
Thus, by applying Theorem \ref{thm-c2-inequality-nu1} to $\Omega^{[1]}_X$, we deduce that the $\alpha_{\varepsilon}$-Harder-Narasimhan filtration of $\Omega^{[1]}_X$
\begin{align}\label{HN-nu=1}
0 =: \mathcal{E}_0 \subsetneq \mathcal{E}_1 \subsetneq \ldots \subsetneq \mathcal{E}_l := \Omega^{[1]}_X
\end{align}
satisfies the properties stated in Theorem \ref{thm-c2-inequality-nu1}, 
where $\alpha_{\varepsilon} := (K_{X} + \varepsilon H) \cdot H_1 \cdots H_{n-2}$ for some ample divisor $H$ and sufficiently small $\varepsilon > 0$. In particular, each sheaf $\mathcal{G}_i$ is semistable. 

Fix a point $x \in X$. By Lemma \ref{lem-Proj-flat-bundles-are-Q-vectorbundles}, there exist an analytic neighbourhood $x \in U \subset X$ and a finite quasi-\'etale Galois cover $\pi\colon V \rightarrow U$ such that $\pi^{[*]}\mathcal{G}_i$ is locally free for every $i$. 
By Theorem \ref{thm-c2-inequality-nu1}, the filtration \eqref{HN-nu=1} is locally analytically split. 
Then, after shrinking $U$, we have
$$
\Omega_V^{[1]} \cong \pi^{[*]}\Omega_U^{[1]} \cong \bigoplus_i \pi^{[*]}\mathcal{G}_i,
$$
which is locally free. 
The variety $V$ has klt singularities since $V$ is a finite quasi-\'etale cover of the klt variety $U$. 
By \cite[Theorem~6.1]{GKKP11}, we conclude that $V$ is smooth, which implies that $U$ (and thus $X$) has only finite quotient singularities. 
In particular, by \cite[Proposition~5.15]{KM98},  we see that the variety $X$ is $\Q$-factorial.

To prove $(2)$, suppose that there exists a resolution $\pi\colon \widetilde{X} \rightarrow X$ of the singularities of $X$ such that $\widetilde{X}$ is of special type in the sense of Campana. 
By \cite[Theorem~A]{PRT22}, to reach a contradiction it suffices to show that there exists a sub-line bundle $\mathcal{L} \subset \Omega_{\widetilde{X}}^1$ with $\nu(\mathcal{L}) = 1$. 

To construct such a sub-line bundle, we focus on the first piece $\mathcal{E}_1$ of the $\alpha_{\varepsilon}$-Harder-Narasimhan filtration \eqref{HN-nu=1}. The sheaf $\mathcal{E}_1$ is $\alpha_{\varepsilon}$-semistable with 
$\mu_{\alpha_{\varepsilon}}(\mathcal{E}_1) \geq \mu_{\alpha_{\varepsilon}}(\Omega_X^{[1]}) > 0$. 
Moreover $\mathcal{E}_1$ is numerically projectively flat and satisfies 
$$
\widehat{c}_2(\mathcal{E}_1) = c_1(\mathcal{E}_1)^2 = 0.
$$
Hence, Corollary \ref{cor-locfree} implies that $\mathcal{E}_{1}$ is locally free and projectively flat.

Since $\pi_1(\widetilde{X}) \cong \pi_1(X)$ by \cite{Tak03}, 
we deduce that the image of any linear representation of $\pi_1(X)$ is virtually abelian by \cite[Theorem~7.8]{Cam04}.
Thus, by \cite[Proposition~3.3]{GKP21}, after passing to a finite \'etale cover of $X$ and $\widetilde{X}$, 
there exists a line bundle $\mathcal{A}$ on $X$ such that $\mathcal{E}_1 \otimes \mathcal{A}^{\vee} \cong \mathcal{E}'$ is a flat locally free sheaf admitting a filtration
$$
0 \subsetneq \mathcal{O}_X \cong \mathcal{E}'_1 \subsetneq \ldots \subsetneq \mathcal{E}'_k = \mathcal{E}',
$$
where each $\mathcal{E}'_i/\mathcal{E}'_{i-1}$ is a flat line bundle. 
The line bundle $\mathcal{L} := \mathcal{E}'_1 \otimes \mathcal{A}$ is a sub-line bundle of $\Omega_{X}^{[1]}$ with $\nu(c_1(\mathcal{L})) = \nu(c_1(\mathcal{E}_1)) = 1$. The pull-back $\pi^*\mathcal{L} \hookrightarrow \pi^{[*]}\Omega_X^{[1]} \hookrightarrow \Omega_{\widetilde{X}}^1$ is the desired line bundle, yielding a contradiction by \cite[Theorem~A]{PRT22}. 
\end{proof}
We now return to the proof of Theorem \ref{thm-Abundance-theorem-nu1}. Let $\pi \colon \widetilde{X} \rightarrow X$ be a log resolution of the singularities of $X$. 
Since $X$ has klt singularities, there exist $\pi$-exceptional effective divisors $E$ and $G$ such that
\begin{equation}
\label{eq-klt}
\pi^{*}K_{X} + E \sim_{\Q} K_{\widetilde{X}} + G \quad \text{and} \quad (\widetilde{X}, G) \text{ is a log smooth klt pair.}
\end{equation}
Since $\widetilde{X}$ is not of special type by Lemma \ref{lem-Harder-Narasimhan-case-v=1}, there exists a Bogomolov sheaf $\mathcal{L} \subset \Omega_{\widetilde{X}}^{p}$ for some $p \geq 1$ (i.e.,\,a sub-line bundle $\mathcal{L} \subset \Omega_{\widetilde{X}}^{p}$ with $\kappa(\mathcal{L}) = p$). By taking the saturation, we may assume that $\mathcal{L}$ is saturated in $\Omega^{p}_{\widetilde{X}}(\log \lceil G \rceil)$.

Consider the exact sequence
\begin{equation}
\label{eq-exact-sequence-CP}
0 \longrightarrow \mathcal{L} \longrightarrow \Omega^{p}_{\widetilde{X}}(\log \lceil G \rceil) 
\longrightarrow \mathcal{Q} := \Omega^{p}_{\widetilde{X}}(\log \lceil G \rceil)/\mathcal{L} 
\longrightarrow 0.
\end{equation}
Since $K_{\widetilde{X}} + \lceil G \rceil$ is pseudo-effective by \eqref{eq-klt}, we conclude that $\det \mathcal{Q}$ is pseudo-effective by \cite[Theorem~1.3]{CP19} and \cite{BDPP13}.
Combining \eqref{eq-klt} and \eqref{eq-exact-sequence-CP}, we obtain
$$
\pi^{*}K_{X} + E + \lceil G \rceil - G \sim_{\Q} K_{\widetilde{X}} + \lceil G \rceil \sim_{\Q} \mathcal{L} + \det \mathcal{Q}.
$$
Hence, we deduce that 
$$
K_{X} \sim_{\Q} \pi_{[*]} \mathcal{L} + \pi_{[*]} \det \mathcal{Q},
$$
by noting that the reflexive hull $\pi_{[*]}(\bullet)$ of the direct image of a line bundle is a $\mathbb{Q}$-line bundle by the $\mathbb{Q}$-factoriality of $X$. Since $\pi_{[*]} \det \mathcal{Q}$ is pseudo-effective, 
we obtain
$$
\kappa(K_{X}) \geq \kappa(\pi_{[*]} \mathcal{L}) = \kappa(\mathcal{L}) = p \ge 1 
$$
by \cite[Corollary~6.2]{LP18}, which confirms that $\kappa(K_{X}) = \nu(K_{X}) = 1$. 
This proves that $K_{X}$ is semi-ample.
\end{proof}

At the end of this subsection, after recalling the definition of $C^\infty$-orbifold fibre bundles, 
we establish Theorem \ref{thm-Abundance-theorem-nu1-str}, thereby completing the proof of Theorem \ref{thm-main1} in the case $\nu(K_{X}) = 1$.

\begin{thm}\label{thm-Abundance-theorem-nu1-str}
Let $X$ be a projective klt variety as in Theorem \ref{thm-main1}. 
Assume that $\nu(K_{X}) = 1$. 
Then, after replacing $X$ with a finite quasi-\'etale cover, 
the variety $X$ is smooth and admits an abelian group scheme $X \rightarrow C$ over a curve $C$ of general type.
\end{thm}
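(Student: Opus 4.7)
The plan is to establish the structure of the Iitaka fibration $f \colon X \to C$ of $K_X$, which exists by Theorem \ref{thm-Abundance-theorem-nu1} with $C$ a smooth projective curve, and then to reduce to the smooth setting treated in \cite{IM22, Hor13}. Assuming from the outset that $X$ is maximally quasi-\'etale (by \cite[Theorem~1.14]{GKP16b}), I would first analyze the $(K_X+\varepsilon H)\cdot H_1\cdots H_{n-2}$-Harder--Narasimhan filtration of $\Omega_X^{[1]}$ provided by Theorem \ref{thm-c2-inequality-nu1}. Its first graded piece $\mathcal{E}_1$ has strictly positive slope, is numerically projectively flat, and by Theorem \ref{thm-c2-inequality-nu1}(2) has $c_1(\mathcal{E}_1)$ proportional to $K_X$; together with the local analytic splitting of the filtration from Theorem \ref{thm-c2-inequality-nu1}(3), this identifies $\mathcal{E}_1$ with a $\Q$-line bundle on $X$ which is essentially $f$-pulled back from $C$.

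Next I would analyze a general fiber $F$ of $f$. Because $K_X$ is $\Q$-linearly equivalent to the $f$-pull-back of an ample class on $C$, restriction gives $K_F\sim_\Q 0$; cutting with general hypersurfaces and invoking property (P2) of $\Q$-Chern classes together with the vanishing $\widehat{c}_2(\Omega_X^{[1]})H_1\cdots H_{n-2}=0$ yields $\widehat{c}_2(\Omega_F^{[1]})\cdot(\text{ample})^{\dim F-2}=0$, placing $F$ in the $\nu(K_F)=0$ case of Theorem \ref{thm-main1}. By \cite{LT18}, a finite quasi-\'etale cover of $F$ is then an abelian variety, and the graded pieces $\mathcal{G}_i$ for $i\geq 2$ restrict to trivial summands of $\Omega_F^{[1]}$.

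The main obstacle, and the real departure from the smooth case of \cite{IM22}, is the elimination of multiple fibers of $f$ in order to gain smoothness of $X$ up to quasi-\'etale cover. My plan here is, first, to combine the local analytic splitting from Theorem \ref{thm-c2-inequality-nu1}(3) with the local structure of numerically projectively flat sheaves given by Lemma \ref{lem-Proj-flat-bundles-are-Q-vectorbundles} to exhibit $f$ as a stratified $C^\infty$-orbifold fiber bundle: near every point $x\in X$ one has, after a finite quasi-\'etale cover of a neighborhood, a product description compatible with $f$. Second, I would use topological monodromy over $C$ minus the discrete locus of multiple fibers (together with the fact that $X$ has only quotient singularities by Lemma \ref{lem-Harder-Narasimhan-case-v=1}), and apply Theorem \ref{thm-representation-quasi-etale} to lift a suitable finite \'etale cover of $C$ trivializing the multiple-fiber monodromy to an actual quasi-\'etale cover $X'\to X$; combined with the local product structure, this forces $X'$ to be smooth.

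Finally, with $X'$ smooth and $f'\colon X'\to C'$ a smooth fibration whose fibers are abelian up to further finite \'etale cover, I would conclude by invoking \cite{IM22, Hor13}. Lemma \ref{lem-Harder-Narasimhan-case-v=1}(2) guarantees that $X'$ is not of special type, which forces $g(C')\geq 2$, so $C'$ is of general type; H\"oring's theorem \cite{Hor13} on isotrivial families of abelian varieties over a curve of general type, combined with the smooth-case analysis in \cite{IM22}, then upgrades $f'$ to an abelian group scheme after one more quasi-\'etale base change.
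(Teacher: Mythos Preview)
Your broad outline matches the paper's: establish the stratified $C^\infty$-orbifold fiber bundle structure of the Iitaka fibration, eliminate multiple fibers, prove smoothness, and then invoke \cite{IM22, Hor13}. The genuine gap is in your argument for smoothness after eliminating multiple fibers. Once you have replaced $X$ so that $f\colon X\to C$ is a stratified $C^\infty$-fiber bundle with no multiple fibers, each fiber $F$ is still only a finite quasi-\'etale quotient $A/G$ of an abelian variety, not a priori smooth. Your claim that ``combined with the local product structure, this forces $X'$ to be smooth'' does not follow: the local quasi-\'etale covers produced by Lemma~\ref{lem-Proj-flat-bundles-are-Q-vectorbundles} do not patch to a global one, and Theorem~\ref{thm-representation-quasi-etale} (which only extends representations of $\pi_1(X_{\reg})$ to $\pi_1(X)$) does not supply such a mechanism either.

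The paper closes this gap with two ingredients you are missing. For $g(C)\geq 1$, so that $\pi_2(C)=1$, one uses the homotopy exact sequence $1\to\pi_1(F_{\reg})\to\pi_1(X_{\reg})\to\pi_1(C)\to 1$ together with Koll\'ar's theorem \cite[Corollary~6.4.3]{Kol93} to pass to a further quasi-\'etale cover of $X$ on which $\pi_1(F_{\reg})$ becomes abelian of rank $2(n-1)$; then an elementary comparison of $H^0(A,\C)^G$ with $H^0(F_{\reg},\C)$ (Lemma~\ref{prop-quotients-abelian-fundamental-group}) forces $G$ to act by translations, whence $F$ and $X$ are smooth. The case $C\cong\mathbb{P}^1$ is not covered by this argument (there $\pi_2(C)\neq 1$) and requires a separate treatment (Proposition~\ref{lem-abelian-fibrations-over-P1}): one first upgrades the $C^\infty$-orbifold bundle to a \emph{holomorphic} one with finite structure group using rigidity of finite group actions on abelian varieties \cite{GP91}, and then trivializes it via a finite base change. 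A minor side remark: $\mathcal{E}_1$ need not have rank one, so it is not itself a $\Q$-line bundle; the paper instead extracts a sub-line bundle of $\mathcal{E}_1$ after a cover making the projectively flat representation virtually abelian.
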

\begin{defn}[$($stratified$)$ $C^\infty$-orbifold fibre bundles]\label{def-infty}
Let $f\colon X \rightarrow C$ be a fibration from a normal projective variety $X$ onto a smooth curve $C$.
\begin{enumerate}
    \item[(1)] Let $f^{-1}(t) = \sum_{i=1}^\ell a_i F_i$ be the irreducible decomposition of the divisor $f^{-1}(t)$ over a point $t \in C$.
    The integer $m := \gcd(a_1, \ldots, a_\ell)$ is called the \emph{multiplicity} over $t \in C$.
    
    \item[(2)] The divisor $D_f$ on $C$ defined by 
    $$
    D_f := \sum_{i=1}^s \left(1 - \frac{1}{m_i}\right) t_i 
    $$
    is called the \emph{orbifold branch divisor of $f$}. 
    Here $m_i$ denotes the multiplicity over $t_i$, and 
$t_i$ runs through all points with multiplicity $m_i \geq 2$.

    \item[(3)] The fibration $f\colon X \rightarrow (C, D_f)$ is called a 
    \emph{$($stratified$)$ $C^\infty$-orbifold fibre bundle} 
    if, for any $t \in C$, there exists an analytic open disc $\Delta \subset C$ containing $t$ with the following property: 
    Let $m$ denote the multiplicity over $t$, and consider the normalization $\widehat{V}$ of the fibre product of 
    $f \colon V := f^{-1}(\Delta) \to \Delta$ and the map
    $p\colon \Delta \rightarrow \Delta,\ z \mapsto z^m$: 
    \[
    \xymatrix{
    \widehat{V} \ar[r]^{\widehat{p}} \ar[d]_{\widehat{f}} & V \ar[d]^f \\
    \Delta \ar[r]^{p} & \Delta, 
    }
    \] 
    Then, there exists a $C^\infty$-isomorphism 
    $$
    \widehat{V} \cong_{C^\infty} \Delta \times F
    $$ 
    over $\Delta$ as $C^\infty$-(stratified) spaces, where $F$ is a general fibre of $f$ 
    (see \cite[Section I.1.5]{GM88} for $C^\infty$-stratified spaces).
    Note that $\widehat{p} \colon \widehat{V} \rightarrow V$ is a finite quasi-\'etale cover 
    since it is \'etale over the smooth locus of $V$, 
   by a straightforward local computation (see \cite[Proof of Lemma 2.2]{Hor07}).
\end{enumerate}
\end{defn}

\begin{prop}\label{prop-orbifold-fibre-bundle-structure}
Under the same assumptions as in Theorem \ref{thm-Abundance-theorem-nu1-str}, 
we consider the Iitaka fibration 
$$
f\colon X \rightarrow (C, D)
$$ 
onto a smooth curve $C$ with orbifold branch divisor $D := D_{f}$. 
Then the fibration $f\colon X \rightarrow (C, D)$ is a 
$($stratified$)$ $C^\infty$-orbifold fibre bundle.
\end{prop}

\begin{proof}
We now examine the filtration \eqref{HN-nu=1} in detail:
\begin{equation*}
0 =: \mathcal{E}_0 \subsetneq \mathcal{E}_1 \subsetneq \ldots \subsetneq \mathcal{E}_\ell := \Omega^{[1]}_X.
\end{equation*}
By replacing this filtration with a Jordan-H\"older refinement, 
we may assume that the sheaves $\mathcal{G}_i := \mathcal{E}_i / \mathcal{E}_{i-1}$ are $\alpha_\varepsilon$-stable. 
Since the sheaves $\mathcal{G}_i$ are numerically projectively flat, 
there exist associated irreducible representations 
$$
\rho_i\colon \pi_1(X) \longrightarrow \mathbb{P}\mathrm{GL}(r_i,\C),
$$
where $r_i := \rk \mathcal{G}_i$.
Moreover, since $c_1(\mathcal{G}_i)$ is numerically proportional to $K_X$, 
there exist integers $a_i, b_i \in \mathbb{Z}_{+}$ and numerically trivial line bundles $\mathcal{M}_i$ on $X$
such that 
\begin{align}
    \det(\mathcal{G}_i)^{[\otimes] b_i}= \mathcal{M}_i \otimes \mathcal{O}_X(a_i K_X). 
    \label{eq-fibreBundleN=1}
\end{align} 
Let 
$$
\sigma_i\colon \pi_1(X) \longrightarrow \GL(1,\C)
$$
be the irreducible representations associated with $\mathcal{M}_i$. 
Furthermore, since $f\colon X \rightarrow (C, D)$ is the Iitaka fibration, 
we may assume that there exists an ample divisor $H_i$ on $C$ such that 
\begin{align}
    \mathcal{O}_X(a_i K_X) \cong f^*\mathcal{O}_C(H_i). 
    \label{eq-fibreBundleN=1_2}
\end{align}

Fix $t \in C$ and let $m \geq 1$ be the multiplicity over $t$. 
Consider the normalization $\widehat{V}$ of the fibre product as in Definition \ref{def-infty}. 
Hereafter, we divide the proof into several steps and show that 
$\widehat{f}\colon \widehat{V} \to \Delta$ is a (stratified) $C^\infty$-fibre bundle.

\vspace{0.5\baselineskip}
\emph{Step 1: The images of $\rho_i|_{\pi_1(\widehat{V})}$ and $\sigma_i|_{\pi_1(\widehat{V})}$ are finite.}
\vspace{0.5\baselineskip}

Set $\rho_i|_{\pi_1(\widehat{V})} := \rho_i \circ (j_{*} \circ \widehat{p}_{*})$ and 
$\sigma_i|_{\pi_1(\widehat{V})} := \sigma_i \circ (j_{*} \circ \widehat{p}_{*})$, 
where $j_{*} \circ \widehat{p}_{*}$ is the homomorphism
$$
j_{*} \circ \widehat{p}_{*} \colon \pi_1(\widehat{V}) \xrightarrow{\widehat{p}_{*}} \pi_1(V) 
 \xrightarrow{j_{*}} \pi_{1}(X),
$$
induced by $\widehat{p} \colon \widehat{V} \to V$ and 
the natural inclusion $j \colon V \hookrightarrow X$. 
In this step, we show that the images of $\rho_i|_{\pi_1(\widehat{V})}$ and $\sigma_i|_{\pi_1(\widehat{V})}$ are finite.

Let $F$ be a general fibre of $\widehat{f}$. 
The fibre $F$ is a klt variety satisfying $K_F \sim_\Q 0$ and $\widehat{c}_2(F) = 0$. 
Thus, by \cite[Theorem 1.2]{LT18}, there exists a finite quasi-\'etale cover $\pi\colon A \rightarrow F$ such that $A$ is an abelian variety. 
We identify $\pi\colon A \rightarrow F$ 
with the composition $A \xrightarrow{\pi} F \hookrightarrow X$ 
and consider the reflexive pull-back of the filtration \eqref{HN-nu=1} via $\pi^{[*]}$. 
Since $A$ is an abelian variety, $\pi^{[*]}\Omega^{[1]}_X$ admits a filtration by trivial bundles on $A$. 
On the other hand, the sheaf $\pi^{[*]}\Omega^{[1]}_X$ also admits a filtration 
whose graded pieces are $\pi^{[*]}\mathcal{G}_i$, which are polystable sheaves of slope zero. 
Hence, we deduce $\pi^{[*]}\mathcal{G}_i \cong \mathcal{O}_A^{\oplus r_i}$.

Therefore, the induced homomorphism 
$$
\pi_{1}(A) \longrightarrow \pi_{1}(F) \longrightarrow \pi_{1}(X) \xrightarrow{\rho_{i}} \mathbb{P}\mathrm{GL}(r_i,\C)
$$
is trivial. 
Similarly, by \eqref{eq-fibreBundleN=1} and \eqref{eq-fibreBundleN=1_2}, we conclude that 
$$
\pi_{1}(A) \longrightarrow \pi_{1}(F) \longrightarrow \pi_{1}(X) \xrightarrow{\sigma_{i}} \mathrm{GL}(1,\C)
$$
is trivial. 
On the other hand, the sequence
\begin{equation}\label{eq-homotopy-right-exact-sequence}
\widehat{\pi}_1(F) \longrightarrow \widehat{\pi}_1(\widehat{V}) \longrightarrow \widehat{\pi}_1(\Delta) = \{1\} 
\end{equation}
is exact (see \cite[\href{https://stacks.math.columbia.edu/tag/0C0J}{Tag 0C0J}]{stacks-project} for the proof applies verbatim to morphisms of analytic varieties). 
Thus, the image 
$$
H := \operatorname{Im}\Big(\widehat{\pi}_1(A) \longrightarrow \widehat{\pi}_1(F) \longrightarrow \widehat{\pi}_1(\widehat{V})\Big)
$$
has finite index in $\widehat{\pi}_1(\widehat{V})$.
This implies that the homomorphisms
$$
\widehat{\rho_i}\colon \widehat{\pi}_1(\widehat{V}) \longrightarrow \widehat{\operatorname{Im}(\rho_i)} \quad \text{ and }\quad 
\widehat{\sigma_i}\colon \widehat{\pi}_1(\widehat{V}) \longrightarrow \widehat{\operatorname{Im}(\sigma_i)}
$$
have finite image. 
By Malcev's theorem \cite[Theorem~4.2]{Weh73}, 
the images of $\rho_i|_{\pi_1(\widehat{V})}$ and $\sigma_i|_{\pi_1(\widehat{V})}$ are finite.

\vspace{0.5\baselineskip}
\emph{Step 2: $\widehat{V}$ is smooth up to finite quasi-\'etale covers.}
\vspace{0.5\baselineskip}

In this step, we show that there exists a finite quasi-\'etale Galois cover 
$\psi \colon \widehat{W} \rightarrow \widehat{V}$ such that $\widehat{W}$ is smooth 
and $h := \widehat{f} \circ \psi \colon \widehat{W} \rightarrow \Delta$ is an abelian group scheme.

Let $\psi \colon \widehat{W} \rightarrow \widehat{V}$ be the finite \'etale cover corresponding to 
the intersection of the kernels of all $\rho_i|_{\pi_{1}(\widehat{V})}$ and $\sigma_i|_{\pi_{1}(\widehat{V})}$.  
By construction, there exist reflexive coherent sheaves $\mathcal{L}_i$ of rank one on $\widehat{W}$
such that $\mathcal{G}_i|_{\widehat{W}} \cong \mathcal{L}_i^{\oplus r_i}$. 
Moreover, by \eqref{eq-fibreBundleN=1} and \eqref{eq-fibreBundleN=1_2}, 
we have $\mathcal{L}_i^{[\otimes] c_i} \cong \mathcal{O}_{\widehat{W}}$, 
where $c_i := a_i \cdot b_i \cdot r_i$. 
Thus, after replacing $\widehat{W}$ with the associated index-one cover, 
we may assume that each $\mathcal{L}_i$ is trivial. 
It follows that $\Omega^{[1]}_{\widehat{W}}$ is locally free, since it is a successive extension of $\mathcal{L}_i$ 
by the filtration \eqref{HN-nu=1}. 
Hence, by \cite[Theorem~6.1]{GKKP11}, we conclude that $\widehat{W}$ is smooth.

By taking a further finite \'etale cover, we may assume that $\psi \colon \widehat{W} \rightarrow \widehat{V}$ is a Galois cover. 
By construction, the fibration $\widehat{W} \rightarrow \Delta$ has multiplicity one. 
Furthermore, the above argument shows that 
the cotangent bundle $\Omega^{[1]}_{\widehat{W}}$ is $(\widehat{f}\circ \psi)$-numerically flat.  
Thus, by \cite[Lemma~2.1(1)]{Hor13}, we conclude that $\widehat{W} \rightarrow \Delta$ is a smooth fibration. 
Since $\widehat{W}$ is connected, its fibres are also connected. 
After possibly replacing $\widehat{W}$ with a further finite \'etale cover, 
we may assume that the fibres of $\widehat{W} \rightarrow \Delta$ are abelian varieties 
(see \cite[Lemma~2.1(2)]{Hor13}). 
This fibration clearly admits a section, and the claim follows.

\vspace{0.5\baselineskip}
\emph{Step 3: The desired conclusion.}
\vspace{0.5\baselineskip}

In this step, we show that $\widehat{V} \rightarrow \Delta$ is a $C^\infty$-fibre bundle.
Let $A$ be a general fibre of $h\colon \widehat{W}\rightarrow \Delta$. 
Then, by Step 2, we have $\widehat{W} \cong_{C^\infty} A \times \Delta$. 
Under this identification, the action of the covering group $G$ of $\widehat{W} \rightarrow \widehat{V}$ is given by
$$
G \curvearrowright \widehat{W} \cong_{C^\infty} A \times \Delta, \quad g\cdot (z, t) = \big(\theta_g(z, t), t\big),
$$
where $z$ is a local coordinate on $A$ and $G$ is the Galois group of $\psi \colon \widehat{W} \rightarrow \widehat{V}$. 
Note that $\theta_g(z, t)$ depends smoothly on $z$ and $t$. 
By \cite[Theorem~2.4.B']{GP91}, there exists (possibly after changing the diffeomorphism) a $C^\infty$-isomorphism $\widehat{W} \cong_{C^\infty} A \times \Delta$ such that $\theta_g$ is independent of $t$, which completes the proof.
\end{proof}

Our next goal is to show that we can eliminate the multiplicities, i.e.,\,we find a finite orbi-\'etale cover 
$(\widehat{C}, \widehat{D}) \rightarrow (C, D)$ such that $\widehat{D} = 0$. 
This is a priori unclear in the case $C \cong \mathbb{P}^{1}$, which is why we need the following proposition:

\begin{prop}   \label{lem-abelian-fibrations-over-P1}
Under the same assumptions as in Proposition \ref{prop-orbifold-fibre-bundle-structure}, 
we further assume that $C$ is the smooth rational curve $\mathbb{P}^{1}$. 
Then, there exists a finite orbifold-\'etale cover $(\widehat{C}, 0) \rightarrow (\mathbb{P}^1, D)$ 
such that $\widehat{C}$ is a smooth curve and 
the fibre product $\widehat{X} :=\widehat{C}\times_{C} X$ is isomorphic to the product $\widehat{C} \times F$.
\end{prop}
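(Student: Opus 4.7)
\emph{Proof proposal.}
The plan proceeds in three stages. In the first stage, I use the canonical bundle formula to force $(\mathbb{P}^1,D)$ to be a hyperbolic (hence ``good'') orbifold. In the second stage, I build a smooth finite cover of the orbifold via Selberg's lemma. In the third stage, I exploit rigidity of isotrivial abelian-like fibrations to reduce the pullback of $f$ to a trivial bundle after one further \'etale base change.

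First I would analyse the canonical bundle formula for $f\colon X\to\mathbb{P}^1$. The semi-ampleness of $K_X$ from Theorem~\ref{thm-Abundance-theorem-nu1}, together with $\kappa(K_X)=\nu(K_X)=1$, yields an ample $\mathbb{Q}$-divisor $H$ on $\mathbb{P}^1$ with $K_X\sim_{\mathbb{Q}}f^*H$. Since the general fiber $F$ is klt with $K_F\sim_{\mathbb{Q}}0$, the Fujino--Mori/Ambro canonical bundle formula provides a decomposition $K_X\sim_{\mathbb{Q}}f^*(K_{\mathbb{P}^1}+D+M)$, where $D=D_f$ is the orbifold-branch divisor and $M$ is the moduli part. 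Proposition~\ref{prop-orbifold-fiber-bundle-structure} shows that $f$ is locally trivial as a $C^\infty$-orbifold fiber bundle, so the associated variation of Hodge structure is isotrivial and $M$ is $\mathbb{Q}$-torsion. Ampleness of $H$ then forces $\deg(K_{\mathbb{P}^1}+D)>0$, i.e., $(\mathbb{P}^1,D)$ has negative orbifold Euler characteristic. In particular, the configurations $D=(1-1/m)p$ and $D=(1-1/m_1)p_1+(1-1/m_2)p_2$ with $m_1\ne m_2$ (the bad genus-zero orbifolds of Thurston) are excluded.

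Second, such a negatively curved $1$-orbifold always admits smooth finite covers: the orbifold fundamental group $\Gamma=\pi_1^{\mathrm{orb}}(\mathbb{P}^1,D)$ is a cocompact Fuchsian group, hence a finitely generated discrete subgroup of $\mathrm{PSL}(2,\mathbb{R})$. By Selberg's lemma, $\Gamma$ contains a torsion-free subgroup of finite index, and the associated quotient of the hyperbolic plane gives a finite orbi-\'etale cover $(\widehat{C}_0,0)\to(\mathbb{P}^1,D)$ with $\widehat{C}_0$ a smooth compact curve of genus at least two.

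Finally, set $\widehat{X}_0:=\widehat{C}_0\times_{\mathbb{P}^1}X$. Because $\widehat{C}_0$ unwinds every orbifold singularity of $(\mathbb{P}^1,D)$, the defining property of the $C^\infty$-orbifold fiber bundle ensures that $\widehat{X}_0\to\widehat{C}_0$ is a genuine smooth algebraic fiber bundle with fiber $F$. This family is isotrivial, and passing to a finite quasi-\'etale cover $A\to F$ with $A$ abelian (as provided by \cite{LT18}), a standard rigidity argument for isotrivial abelian fibrations over a smooth curve produces a further finite \'etale cover $\widehat{C}\to\widehat{C}_0$ after which $\widehat{X}:=\widehat{C}\times_{\mathbb{P}^1}X$ becomes holomorphically trivial, i.e., $\widehat{X}\cong\widehat{C}\times F$. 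The composition $\widehat{C}\to\widehat{C}_0\to(\mathbb{P}^1,D)$ is then the required orbi-\'etale cover. I expect the main obstacle to be this last step: the $C^\infty$-triviality of the bundle is automatic from Proposition~\ref{prop-orbifold-fiber-bundle-structure}, but upgrading to holomorphic triviality requires careful control of the residual monodromy representation $\pi_1(\widehat{C}_0)\to\mathrm{Aut}(F)$ and a descent argument from the abelian cover $A\to F$ back to $F$.
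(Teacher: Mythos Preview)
Your approach is genuinely different from the paper's, and Stages~1--2 are fine, but Stage~3 contains the gap you yourself flag, and the paper resolves it by a completely different mechanism that you should be aware of.

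The paper never invokes the canonical bundle formula or Selberg's lemma. Instead it exploits the fact that $\mathbb{P}^1$ has a contractible Zariski-open chart. Concretely, it removes a single point $t_0$, builds the obvious orbi-\'etale cover $\widehat{U}=\mathbb{A}^1 \to (\mathbb{A}^1,D)$ by $z\mapsto\prod (z-t_i)^{m_i}$, and observes that since $\widehat{U}$ is contractible one has $\pi_1(\widehat{V})\cong\pi_1(F)$ for the pullback $\widehat{V}$. This allows a global finite quasi-\'etale cover $\widehat{W}\to\widehat{V}$ whose fibers over $\mathbb{A}^1$ are abelian varieties; such a family over $\mathbb{A}^1$ is automatically a product $\mathbb{A}^1\times A$, and the rigidity theorem of Grove--Petersen \cite[Theorem~2.4]{GP91} forces the Galois action to be constant in $t$, so $\widehat{V}\cong\mathbb{A}^1\times F$ \emph{holomorphically}. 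This is the step that upgrades the $C^\infty$-orbifold fiber bundle to a holomorphic one. Once that is done, the paper fixes a very ample $\mathcal{L}$ on $X$, observes that the structure group reduces to $\mathrm{Aut}(F,\mathcal{L}|_F)$, which is finite because it is simultaneously linear algebraic and contained in an extension of a finite group by the abelian variety $\mathrm{Aut}^0(F)$; the cover corresponding to the kernel of the monodromy $\pi_1^{\mathrm{orb}}(\mathbb{P}^1,D)\to\mathrm{Aut}(F,\mathcal{L}|_F)$ then does the job directly.

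Your Stage~3 tries to carry out the analogous upgrade over the high-genus curve $\widehat{C}_0$, where contractibility is unavailable. The sentence ``the defining property of the $C^\infty$-orbifold fiber bundle ensures that $\widehat{X}_0\to\widehat{C}_0$ is a genuine smooth algebraic fiber bundle'' is not justified: Definition~\ref{def-infty} only gives a stratified $C^\infty$-fiber bundle, so the fibers are a priori only diffeomorphic, not biholomorphic, and you have no holomorphic isotriviality to feed into a rigidity argument. You could try to recover isotriviality from the torsion moduli part you found in Stage~1, but you would still need to globalize the quasi-\'etale abelian cover $A\to F$ to a family over $\widehat{C}_0$, which requires a $\pi_1$-argument you have not supplied. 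In short, the ``standard rigidity argument'' you invoke is exactly the content of the paper's Step~1, and the paper's trick of working over $\mathbb{A}^1$ is what makes it tractable.

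What each route buys: your Stages~1--2 give an independent proof that $(\mathbb{P}^1,D)$ is a good orbifold (something the paper never needs to verify, since the finite monodromy representation produces the cover regardless), and if you could complete Stage~3 your argument would not use $C\cong\mathbb{P}^1$ in any essential way, hence would absorb the $g(C)\ge 1$ case treated separately in Theorem~\ref{thm-Abundance-theorem-nu1-str}. The paper's route is shorter and self-contained but is specific to $\mathbb{P}^1$.
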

\begin{proof}
Note that $f\colon X \rightarrow (C, D)$ is a $C^\infty$-orbifold fibre bundle by Proposition \ref{prop-orbifold-fibre-bundle-structure}.

\vspace{0.5\baselineskip}
\emph{Step 1: $f\colon X \rightarrow (C, D)$ is a holomorphic orbifold fibre bundle.}
\vspace{0.5\baselineskip}

Fix a point $t_0 \in \mathbb{P}^1$, 
we set $U := \mathbb{P}^1 \setminus \{t_0\} \cong \mathbb{A}^1$ and $V := f^{-1}(U)$. 
Consider the finite orbifold-\'etale cover 
$$
p \colon (\widehat{U} := \mathbb{A}^1, 0) \longrightarrow (U, D), \quad  
z \longmapsto (z - t_i)^{m_i}, 
$$ 
where $\{t_1, \ldots, t_s \} \subset C$ is the set of points with multiplicities $m_i \geq 2$.
The base change $\widehat{f}\colon \widehat{V} := \widehat{U} \times_{U} X \rightarrow \widehat{U}$ is still a $C^\infty$-fibre bundle. 
We now show that all fibres of $\widehat{f}$ are isomorphic, which implies that $f\colon X \rightarrow (C, D)$ is a holomorphic orbifold fibre bundle.
Since $\widehat{U} = \mathbb{A}^1$ is contractible, we have $\pi_1(\widehat{V}) \cong \pi_1(F)$. 
In particular, there exists a finite quasi-\'etale Galois cover $\widehat{W} \rightarrow \widehat{V}$ with Galois group $G$
such that the fibres of $\widehat{W} \rightarrow \widehat{U} \cong \mathbb{A}^1$ are abelian varieties.
Hence $\widehat{W}$ is isomorphic to the product $\widehat{U} \times A$ 
(see, for example, \cite[Proof of Prop.~3.12(vii)]{DPS94}).
By \cite[Theorem~2.4]{GP91}, 
the homomorphism $G \rightarrow \operatorname{Aut}(A)$ is rigid (i.e.,\,the action of $G$ on all fibres is the same). 
This shows that 
\begin{equation*}
\widehat{V} \cong \widehat{W}/G \cong \widehat{U} \times (A/G) = \widehat{U} \times F,
\end{equation*}
which is a holomorphic fibre bundle.

\vspace{0.5\baselineskip}
\emph{Step 2: $\widehat{X}$ is isomorphic to the product $F \times \widehat{C}$ after finite orbifold-\'etale covers.}
\vspace{0.5\baselineskip}

Fix a very ample line bundle $\mathcal{L}$ on $X$. 
Then the structure group of $f$ is reduced to $\operatorname{Aut}(F, \mathcal{L}|_F) \subset \operatorname{Aut}(F)$. 
Now, $\operatorname{Aut}(F, \mathcal{L}|_F)$ is a linear algebraic group (see, for example, \cite[Theorem~2.16]{Bri18}), 
while $\operatorname{Aut}^0(F)$ is an abelian variety since $\kappa(F) \geq 0$ 
(see, for example, \cite[Proposition~4.6]{Amb05}). 
This implies that $\operatorname{Aut}(F, \mathcal{L}|_F)$ is finite.

In particular, the variety $X$ is defined by a representation $\rho\colon \pi_1(C, D) \rightarrow \operatorname{Aut}(F, \mathcal{L}|_F)$ of the orbifold fundamental group. 
Let $(\widehat{C}, \widehat{D}) \rightarrow (C, D)$ be the finite orbifold-\'etale cover corresponding to $\ker(\rho)$. 
Then, by construction, we obtain $\widehat{X} \cong \widehat{C} \times F$, 
and moreover $\widehat{D} = 0$.
\end{proof}

Finally, we make the following elementary observation:

\begin{lem}   \label{prop-quotients-abelian-fundamental-group}
Let $A$ be an abelian variety of dimension $n$, and 
$\pi\colon A \rightarrow F := A/G$ be a finite Galois quasi-\'etale cover with Galois group $G$. 
If $\pi_1(F^{\reg})$ is an abelian group $($possibly with torsion$)$ of rank $2n$, then $F$ is smooth.
\end{lem}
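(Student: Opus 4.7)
The plan is to deduce from the abelianness of $\pi_1(F^{\reg})$ that every element of $G$ acts on $A$ purely by translation, whence the $G$-action is free and $F = A/G$ is smooth.

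First, I would set $U := \pi^{-1}(F^{\reg}) \subseteq A$. Since $\pi$ is quasi-\'etale, the complement $A \setminus U$ has complex codimension at least $2$ in the smooth variety $A$, and hence $\pi_1(U) \cong \pi_1(A) \cong \mathbb{Z}^{2n}$. The restriction $\pi|_U \colon U \to F^{\reg}$ is a Galois \'etale cover with group $G$, which produces the short exact sequence
\[
1 \longrightarrow \pi_1(U) \longrightarrow \pi_1(F^{\reg}) \longrightarrow G \longrightarrow 1,
\]
in which the conjugation action of $G$ on the normal subgroup $\pi_1(U) \cong \pi_1(A)$ coincides with the action induced on $\pi_1(A)$ by the deck transformations of $A \to F$.

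The key step is the following. Since $\pi_1(F^{\reg})$ is abelian, conjugation inside this extension is trivial, so $G$ acts trivially on $\pi_1(A) = \mathbb{Z}^{2n}$. Writing $A = \mathbb{C}^n/\Lambda$ with $\Lambda = \pi_1(A)$, any automorphism of $A$ lifts to an affine map $z \mapsto Mz + b$ on $\mathbb{C}^n$ with $M \in \mathrm{GL}_n(\mathbb{C})$ preserving $\Lambda$, and the induced action on $\pi_1(A)$ is exactly $M|_\Lambda$. Triviality of this action forces $M|_\Lambda = \mathrm{id}_\Lambda$, and since $\Lambda$ has real rank $2n$ and therefore spans $\mathbb{C}^n$ over $\mathbb{R}$ (this is where the rank hypothesis intervenes), we conclude $M = \mathrm{id}$. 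Hence every $g \in G$ acts on $A$ by a pure translation; non-trivial translations of an abelian variety are fixed-point free, so $G$ acts freely on $A$, the map $\pi$ is \'etale everywhere, and $F = A/G$ is smooth.

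I expect the two main technical points to be (i) the invariance of $\pi_1$ under removing a closed analytic subset of complex codimension $\geq 2$ from the smooth manifold $A$, and (ii) the identification of the conjugation action in the above extension with the deck transformation action of $G$ on $\pi_1(A)$. Both are standard facts for smooth manifolds and finite Galois \'etale covers, respectively, and should not cause real difficulty; the heart of the proof is the very short linear-algebra observation that an automorphism of $\Lambda \cong \mathbb{Z}^{2n}$ induced by a $\mathbb{C}$-linear map $M$ on $\mathbb{C}^n$ is trivial only when $M$ itself is the identity.
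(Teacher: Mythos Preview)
Your proof is correct. Both your argument and the paper's reduce to showing that $G$ acts trivially on $\pi_1(A)$, whence every element of $G$ is a translation and the action is free. The paper reaches this via a cohomological dimension count: it identifies $H^1(A,\C)^G$ with $H^1(F^{\reg},\C)$ (the paper writes $H^0$, evidently a typo) and invokes the rank hypothesis to see that both spaces have dimension $2n$, so the invariants exhaust $H^1(A,\C)$. You instead use the abelianness of $\pi_1(F^{\reg})$ directly to kill the conjugation action in the extension $1 \to \pi_1(A) \to \pi_1(F^{\reg}) \to G \to 1$; this is slightly more direct and avoids passing through cohomology.

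One minor point: your parenthetical ``this is where the rank hypothesis intervenes'' is misplaced. The lattice $\Lambda = \pi_1(A)$ has rank $2n$ automatically from $\dim A = n$, independent of any hypothesis on $\pi_1(F^{\reg})$. Your argument in fact uses only the abelianness of $\pi_1(F^{\reg})$, and once that is assumed the rank-$2n$ condition is automatic (an abelian group containing $\mathbb{Z}^{2n}$ with finite index has rank $2n$). The paper's cohomological argument, by contrast, needs only that the \emph{abelianization} of $\pi_1(F^{\reg})$ has rank $2n$; so the two proofs happen to draw on complementary halves of the stated hypothesis.
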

\begin{proof}
Consider the natural isomorphisms
\begin{equation*}
H^0(A, \C)^G \cong H^0(\pi^{-1}(F^{\reg}), \C)^G \cong H^0(F^{\reg}, \C).
\end{equation*}
By assumption, both $H^0(A, \C)$ and $H^0(F^{\reg}, \C)$ are $\C$-vector spaces of the same dimension. 
It follows that the action of $G$ on $H^0(A, \C)$ is trivial. 
Hence $G$ acts by translations on $A$, which implies that the action has no fixed points, and therefore $F$ is smooth.
\end{proof}

\begin{proof}[Proof of Theorem \ref{thm-Abundance-theorem-nu1-str}]
By Proposition \ref{prop-orbifold-fibre-bundle-structure}, up to finite quasi-\'etale covers, 
the Iitaka fibration $f\colon X \rightarrow (C, D)$ is a (stratified) $C^\infty$-orbifold fibre bundle. 
In the case $C \cong \mathbb{P}^{1}$, the proof follows from Proposition \ref{lem-abelian-fibrations-over-P1}, 
so we may assume that $C$ is a curve of genus $g(C) \geq 1$.
In particular $K_C + D$ is either numerically trivial or ample. 
Hence, there exists a finite orbi-\'etale cover $(\widehat{C}, \widehat{D}) \rightarrow (C, D)$ with $\widehat{D} = 0$ (see, for example, \cite{CGG23}). 
After replacing $X$ by the induced quasi-\'etale cover $\widehat{X} \rightarrow X$, 
we may assume that $f\colon X \rightarrow C$ is a (stratified) $C^\infty$-fibre bundle. 
Moreover, since $g(C) \geq 1$, we have $\pi_2(C) = 1$. 
Thus, we obtain the following long exact sequence of homotopy groups:
\begin{equation*}
1 = \pi_2(C) \longrightarrow \pi_1(F_{\reg}) \longrightarrow \pi_1(X_{\reg}) \longrightarrow \pi_1(C) \longrightarrow 1.
\end{equation*}
Since $F$ is a finite quasi-\'etale quotient of an abelian variety, say $F = A/G$, the group $\pi_1(F_{\reg})$ contains a free abelian subgroup of rank $2(n-1)$ of finite index. 
By \cite[Corollary~6.4.3]{Kol93}, after replacing $X$ with a finite quasi-\'etale cover, 
we may assume that $\pi_1(F^{\reg})$ is abelian of rank $2(n-1)$. 
By Lemma \ref{prop-quotients-abelian-fundamental-group}, 
the variety $F$ (and hence $X$) is smooth. 
Finally, by the abundance theorem of \cite{IM22}, 
the structure of abelian group schemes follows from \cite{Hor13}.
\end{proof}

\subsection{Case of \texorpdfstring{$\nu(K_{X}) \geq 2$}{}}\label{Ssec-proof2}

This subsection is devoted to proving Theorem \ref{thm-main1} in the case $\nu(K_X) \geq 2$. 
To this end, we study certain inequalities for the second $\Q$-Chern class of the cotangent sheaf $\Omega_{X}^{[1]}$, 
which have already been established in the case where $X$ is smooth in codimension two 
(see \cite[Chapter~7]{Miy87}, \cite[Section~5]{Lan02}, and \cite[Theorem~7.2]{RT16}). 
Our contribution is to extend these inequalities to varieties with klt singularities 
and to examine the conditions under which equality is attained. 
To this end, we make use of the theory of Higgs sheaves. 
We now consider the following setup.

\begin{setup}
\label{setup-minimal-variety}
Let $X$ be a projective klt variety of dimension $n \geq 2$, and 
let $H_1, \ldots, H_{n-2}$ be ample $\Q$-Cartier divisors on $X$.
Assume that $K_X$ is nef and $\nu(K_X) \geq 2$. 
As in Section~\ref{section:3}, 
consider the $(K_X \cdot H_1 \cdots H_{n-2})$-Harder-Narasimhan filtration 
of the cotangent sheaf $\Omega_{X}^{[1]}$:
$$
0 =: \mathcal{E}_0 \subset \mathcal{E}_1 \subset \ldots \subset \mathcal{E}_l := \Omega_{X}^{[1]}. 
$$
We define the constants $\mu_{i}$, $\mu$, and $d$ by 
$$
\mu_{i} := \mu_{K_X \cdot H_1 \cdots H_{n-2}}(\mathcal{G}_i), \quad
\mu := \mu_{K_X \cdot H_1 \cdots H_{n-2}}(\Omega_{X}^{[1]}), \quad
d := c_1(\Omega_{X}^{[1]})^{2} \cdot H_1 \cdots H_{n-2}. 
$$
\end{setup}

We investigate inequalities for the second $\Q$-Chern class $\widehat{c}_{2}(\Omega_{X}^{[1]})$, 
dividing our situation into the cases $r_1 \geq 2$ and $r_1 = 1$. 
The following theorems (Theorems \ref{thm-Miyaoka-inequality-1} and \ref{thm-Miyaoka-inequality-2}) 
have been established for smooth projective varieties $X$ in \cite[Theorem~5.5]{Lan02}, 
but here we generalize them to varieties with klt singularities and examine the cases where equality holds.

\begin{thm} [{cf.\,\cite[Theorem 5.5]{Lan02}}]
\label{thm-Miyaoka-inequality-1}
Consider Setup \ref{setup-minimal-variety} and assume that $r_1 \geq 2$.
Then the following inequality holds:
\begin{equation}
\label{eq-Miyaoka-inequality-1}
\widehat{c}_2(\Omega_{X}^{[1]}) H_1 \cdots H_{n-2} 
\geq \frac{r_1}{2(r_1+1)}\, c_1(\Omega_{X}^{[1]})^{2} H_1 \cdots H_{n-2}.
\end{equation}

Moreover, if $X$ is maximally quasi-\'etale and equality holds in \eqref{eq-Miyaoka-inequality-1}, 
then $X$ is smooth and the $(K_X \cdot H_1 \cdots H_{n-2})$-Harder-Narasimhan filtration of $\Omega_{X}^{1}$ is given by
\begin{equation*}
0 \longrightarrow \mathcal{G}_1 \longrightarrow \Omega_{X}^{1} \longrightarrow \mathcal{G}_2 \longrightarrow 0,
\end{equation*}
and satisfies the following properties$:$
\begin{enumerate}[label=$(\arabic*)$]
\item $\mathcal{G}_1$ is a locally free sheaf of rank $r_1$ such that
\[
\bigl( 2(r_1+1)\,\widehat{c}_2(\mathcal{G}_1) - r_1\, c_1(\mathcal{G}_1)^{2} \bigr) H_1 \cdots H_{n-2} = 0.
\]
\item $\mathcal{G}_{2}$ is flat and locally free.
\end{enumerate}
\end{thm}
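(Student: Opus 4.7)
The plan is to extend the proof of \cite[Theorem 5.5]{Lan02} from the smooth case to klt varieties, combining Langer's refined slope inequality (Proposition \ref{prop-Langer-inequality-2}) with the Bogomolov-Gieseker inequality for Higgs sheaves (Proposition \ref{prop-Higgs-2}) applied to the maximal destabilizing subsheaf. Set $\beta := H_1 \cdots H_{n-2}$, $\alpha := K_X \cdot \beta$, and $d := K_X^2 \cdot \beta$. Since $\nu(K_X) \geq 2$, we have $d > 0$ and $\mu := d/n > 0$. By \cite[Corollary 6.4]{Miy87} and \cite{CKT21}, $\Omega_X^{[1]}$ is generically nef on the minimal klt variety $X$, so $\mu_l \geq 0$. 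The maximal destabilizing subsheaf $\mathcal{G}_1 \subseteq \Omega_X^{[1]}$ is $\alpha$-semistable with $\mu_1 \geq \mu > 0$. First, I would apply Proposition \ref{prop-Higgs-1} to construct the Higgs sheaf $(\mathcal{H} := \mathcal{G}_1 \oplus \mathcal{O}_X, \theta)$, which is $\alpha$-stable. Proposition \ref{prop-Higgs-2} then yields
\[
2(r_1+1)\, \widehat{c}_2(\mathcal{G}_1) \beta \geq r_1\, c_1(\mathcal{G}_1)^2 \beta,
\]
equivalently $\widehat{\Delta}(\mathcal{G}_1)\beta / r_1 \geq c_1(\mathcal{G}_1)^2 \beta / (r_1(r_1+1))$. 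Feeding this bound into Proposition \ref{prop-Langer-inequality-2}, using $(\mu_1 - \mu)(\mu - \mu_l) \leq (\mu_1 - \mu)\mu$ from $\mu_l \geq 0$, together with the Hodge index theorem (Proposition \ref{prop-hodge-index-type}) applied to $(K_X, c_1(\mathcal{G}_1))$ to control $c_1(\mathcal{G}_1)^2 \beta$ relative to $(c_1(\mathcal{G}_1) \cdot K_X \cdot \beta)^2/d$, algebraic manipulation is expected to deliver \eqref{eq-Miyaoka-inequality-1}.

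For the equality case, assume $X$ is maximally quasi-\'etale and equality holds in \eqref{eq-Miyaoka-inequality-1}. Then equality must propagate to each inequality above. Equality in Proposition \ref{prop-Higgs-2} forces $\End(\mathcal{G}_1 \oplus \mathcal{O}_X)$ to be locally free; since $\mathcal{G}_1$ is a direct summand of this endomorphism sheaf, $\mathcal{G}_1$ is locally free with $(2(r_1+1)\widehat{c}_2(\mathcal{G}_1) - r_1 c_1(\mathcal{G}_1)^2)\beta = 0$, yielding property (1). Equality in Proposition \ref{prop-Langer-inequality-2} gives $l \leq 2$, $\widehat{\Delta}(\mathcal{G}_2^{\vee\vee})\beta = 0$ (when $l = 2$), the map $\mathcal{G}_2 \to \mathcal{G}_2^{\vee\vee}$ being isomorphic in codimension two, and a proportionality relation $c_1(\mathcal{G}_1)/r_1 - c_1(\mathcal{G}_2)/r_2 = \lambda K_X$ for some $\lambda \in \mathbb{Q}$. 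Equality in the estimate exploiting $\mu_l \geq 0$ forces $\mu_l = 0$, hence $\mu_2 = 0$ and $c_1(\mathcal{G}_2) \cdot K_X \cdot \beta = 0$. Substituting $c_1(\mathcal{G}_1) + c_1(\mathcal{G}_2) = K_X$ into the proportionality relation then yields $c_1(\mathcal{G}_2) \equiv 0$ numerically modulo $\beta$, and by Lemma \ref{lem-Q-Cartier-of-Gi}, $\det \mathcal{G}_2$ is a $\Q$-line bundle with trivial first Chern class. Hence $\mathcal{G}_2^{\vee\vee}$ is numerically projectively flat with $c_1 = 0$, so by \cite[Theorem 1.4]{LT18} (as used in Corollary \ref{cor-inequality-c2-nu2}), $\mathcal{G}_2^{\vee\vee}$ is flat locally free.

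To conclude smoothness of $X$, I would apply Lemma \ref{lem-c2-in-short-exact-seq} to the sequence $0 \to \mathcal{G}_1 \to \Omega_X^{[1]} \to \mathcal{G}_2 \to 0$: the numerical data accumulated above shows that equality holds there, so the sequence is Zariski-locally split in codimension $\geq 3$ and $\mathcal{G}_2 \cong \mathcal{G}_2^{\vee\vee}$ in codimension two. Adapting the proof of Lemma \ref{lem-(co)tangent-extension-of-proj-flat}, with the numerical projective flatness of $\mathcal{G}_1$ replaced by its actual local freeness while $\mathcal{G}_2^{\vee\vee}$ retains numerical projective flatness, one passes to a local finite quasi-\'etale cover where both pieces are locally free and shows the extension is analytically locally split. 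Hence $\Omega_X^{[1]}$ is locally free on all of $X$, and by \cite[Theorem 6.1]{GKKP11} the variety $X$ is smooth, yielding properties (1) and (2). The main obstacle lies in the computational heart of the first paragraph: combining the Higgs-Bogomolov-Gieseker bound, Langer's refined slope inequality, and the Hodge index theorem requires delicate algebraic juggling, complicated by the fact that $c_1(\mathcal{G}_1)^2 \beta$ has no a priori lower bound (Hodge index only provides the upper bound $(r_1\mu_1)^2/d$), necessitating a careful casewise analysis according to its sign.
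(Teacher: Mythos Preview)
Your approach mirrors the paper's: the same ingredients (the Higgs Bogomolov--Gieseker bound for $\mathcal{G}_1$ via Propositions~\ref{prop-Higgs-1} and \ref{prop-Higgs-2}, Proposition~\ref{prop-Langer-inequality-2}, the Hodge index theorem, and generic nefness), and an essentially identical equality-case analysis (the paper likewise uses Corollary~\ref{cor-locfree} for local freeness of $\mathcal{G}_1$, then \cite{LT18} for flatness of $\mathcal{G}_2^{\vee\vee}$, then \cite[Lemma~9.9]{AD14}, which underlies your adaptation of Lemma~\ref{lem-(co)tangent-extension-of-proj-flat}, for smoothness).

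The obstacle you flag for the inequality is real, and your proposed ``casewise analysis according to sign'' does not resolve it. Routing through Proposition~\ref{prop-Langer-inequality-2} and then the Higgs bound leaves you with $\tfrac{1}{n}\widehat{\Delta}(\Omega_X^{[1]})\beta \ge \tfrac{1}{r_1(r_1+1)}c_1(\mathcal{G}_1)^2\beta - \tfrac{n}{d}(\mu_1-\mu)\mu$, and Hodge gives only $c_1(\mathcal{G}_1)^2\beta \le (r_1\mu_1)^2/d$, the wrong direction; when $c_1(\mathcal{G}_1)^2\beta<0$ the situation is worse, not better. (The paper's display~\eqref{eq-Delta2-Higgs} reverses this inequality without justification, so the issue is shared.) The root cause is that the proof of Proposition~\ref{prop-Langer-inequality-2} already applies Hodge to the cross terms, consuming the $c_1(\mathcal{G}_1)^2$ contribution hidden there. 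The fix is to return to the raw decomposition of Lemma~\ref{lem-Langer-Theorem:5.1}, insert Higgs BG for $\mathcal{G}_1$ and ordinary BG for $i\ge 2$, and \emph{then} expand $\sum_{i<j}r_ir_j\big(\tfrac{c_1(\mathcal{G}_i)}{r_i}-\tfrac{c_1(\mathcal{G}_j)}{r_j}\big)^2\beta = n\sum_i \tfrac{1}{r_i}c_1(\mathcal{G}_i)^2\beta - d$ before applying Hodge. After collecting, each $c_1(\mathcal{G}_i)^2\beta$ carries a nonpositive coefficient (that of $c_1(\mathcal{G}_1)^2\beta/r_1^2$ is $r_1^2(r_1+1-n)/(n(r_1+1))\le 0$), so Hodge applies in the right direction and yields exactly the paper's quadratic $f(\mu_1)=\tfrac{r_1}{r_1+1}\mu_1^2 - n\mu\mu_1 + n\mu^2$. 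You then also need the bound $r_1\mu_1 \le d$ (equivalently $\mu_1 \le n\mu/r_1$), which follows from generic nefness applied to the quotient $\Omega_X^{[1]}/\mathcal{G}_1$ but which you do not state; with it, convexity of $f$ on $(-\infty,\tfrac{(r_1+1)n\mu}{2r_1}]$ gives $f(\mu_1)\ge f(n\mu/r_1)$ and hence \eqref{eq-Miyaoka-inequality-1}.
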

\begin{proof}
Set $\polar := H_1 \cdots H_{n-2}$. 
We first consider the second $\Q$-Chern class of $\mathcal{G}_1$.
By applying Proposition \ref{prop-Higgs-1} to $\mathcal{G}_1 \subset \Omega_{X}^{[1]}$, 
we obtain the $(K_X \cdot H_{1} \cdots H_{n-2})$-stable Higgs sheaf $(\mathcal{G}_1 \oplus \mathcal{O}_{X}, \theta)$, 
where the Higgs field $\theta$ is defined as in Proposition \ref{prop-Higgs-1}. 
Then, the Bogomolov-Gieseker inequality in Proposition \ref{prop-Higgs-2} yields 
\begin{equation}
\label{eq-Delta2-Higgs-G1}
\widehat{c}_2(\mathcal{G}_1)\polar
\geq \frac{r_1}{2(r_1+1)}\, c_1(\mathcal{G}_1)^2 \polar.
\end{equation}

Meanwhile, we have 
\begin{equation}
\label{eq-minimal-variety-generically-nef}
 r_1 \mu_1 \leq n\mu = d \quad  \text{and} \quad  \mu_{l} \geq 0 
\end{equation}
by Miyaoka's semipositivity theorem 
(see \cite[Corollary~6.4]{Miy87}, \cite[Theorem~1.3]{CP19}, and \cite[Theorem~5.5]{CKT21}).
The Hodge index theorem (see Proposition \ref{prop-hodge-index-type}) implies that 
\begin{equation}
\label{eq-Miyaoka-2}
\sum_{i=2}^{l} \frac{1}{r_i} \widehat{c}_1(\mathcal{G}_{i}^{\vee\vee})^{2}\polar
\underset{\text{(by Prop.~\ref{prop-hodge-index-type})}}{\leq}  
\frac{1}{d} \sum_{i=2}^{l} r_i \mu_{i}^{2} 
\leq  \frac{\mu_1}{d} \sum_{i=2}^{l} r_i \mu_{i}
= \mu_1 - \frac{r_1 \mu_{1}^{2}}{d}.
\end{equation}
As in \cite[Proposition~7.1]{Miy87}, we obtain
\begin{align}
\begin{split}
\label{eq-Miyaoka-4}
2\widehat{c}_2(\Omega_{X}^{[1]})\polar
&\underalign{\text{(by Lem.~\ref{lem-c2-in-short-exact-seq})}}{\geq}
\left(2\sum_{i=1}^{l} \widehat{c}_2(\mathcal{G}_{i}^{\vee\vee})
-   c_1(\mathcal{G}_{i})^{2}\right)\polar
+  c_1(\Omega_{X}^{[1]})^{2}\polar \\
&\underalign{\text{(by \eqref{eq-Delta2-Higgs-G1})}}{\geq}
\left(- \frac{1}{r_1 + 1} c_1(\mathcal{G}_1)^2 -\sum_{i=2}^{l} \frac{1}{r_i} c_1(\mathcal{G}_{i})^{2}\right)\polar 
+ c_1(\Omega_{X}^{[1]})^2\polar\\
&\underalign{\text{(by Prop.~\ref{prop-hodge-index-type})}}{\geq}
- \frac{r_1^2}{d(r_1 + 1)}\mu_1^2 -\sum_{i=2}^{l} \frac{1}{r_i} c_1(\mathcal{G}_{i})^{2}\polar 
+ c_1(\Omega_{X}^{[1]})^2\polar \\
&\underalign{\text{(by \eqref{eq-Miyaoka-2})}}{\geq}
\frac{1}{d}
\left( \frac{r_1}{r_1 + 1} \mu_1^2 - d \mu_1 \right) 
+ c_1(\Omega_{X}^{[1]})^2\polar.
\end{split}
\end{align}
Consider the function
\[
f\colon \R \longrightarrow \R, \quad f(x) = \frac{r_1 x^2}{r_1 + 1} - d x.
\]
Note that $f$ is convex and attains its unique minimum at $x = \frac{(r_1 + 1)d}{2r_1}$. 
We observe that 
$$
\mu_1 \underalign{\text{(by \eqref{eq-minimal-variety-generically-nef})}}{\leq} \frac{d}{r_1} 
\underalign{}{<}
\frac{(r_1 + 1)d}{2r_1}.
$$ 
Thus, we obtain
\[
2\widehat{c}_2(\Omega_{X}^{[1]})\polar
\geq \frac{f\!\left(\frac{d}{r_1}\right)}{d}  + c_1(\Omega_{X}^{[1]})^2\polar
= \left( 
 \frac{1}{r_1(r_1 + 1)} - \frac{1}{r_1}  + 1 \right)c_1(\Omega_{X}^{[1]})^2 \polar,
\]
which yields the desired inequality \eqref{eq-Miyaoka-inequality-1}. 
Here, we used $d := c_1(\Omega_{X}^{[1]})^{2} H_1 \cdots H_{n-2}$ for the last equality.

We assume that $X$ is maximally quasi-\'etale and that equality holds in \eqref{eq-Miyaoka-inequality-1}.   
In the case $r_1 = n$, this situation has essentially been addressed in \cite{GKPT19a}. 
Specifically, Proposition \ref{prop-Higgs-2} shows that 
$\End(\Omega_{X}^{[1]} \oplus \mathcal{O}_{X})$ is locally free. 
Since $\Omega_{X}^{[1]}$ is a direct summand of $\End(\Omega_{X}^{[1]} \oplus \mathcal{O}_{X})$, 
the cotangent sheaf  $\Omega_{X}^{[1]}$ is also locally free. 
Therefore, by \cite[Theorem~6.1]{GKKP11}, we conclude that $X$ is smooth.

We now consider the case of $r_1 < n$. 
Since equality  holds in \eqref{eq-minimal-variety-generically-nef}, we have $r_1 \mu_1 = n\mu$. 
It follows that $\mu_2 = 0$, and by the definition of the Harder-Narasimhan filtration, we conclude that $l = 2$. 
By the same argument as in Theorem~\ref{thm-Langer-Liu-Liu-inequality}, we obtain $c_1(\mathcal{G}_2) = 0$.

Our next step is to prove that $\mathcal{G}_1$ and $\mathcal{G}_{2}^{\vee\vee}$ are locally free.
Since equality holds in \eqref{eq-Delta2-Higgs-G1}, 
we deduce that 
\begin{equation*} 
\label{eq-Miyaoka-Uniformisation}
\bigl( 2(r_1 + 1)\,\widehat{c}_2(\mathcal{G}_1) - r_1\, c_{1}(\mathcal{G}_1)^{2} \bigr)\polar = 0.
\end{equation*}
Thus, Corollary \ref{cor-locfree} implies that $\mathcal{G}_1$ is locally free.
Since $\mathcal{G}_{2}^{\vee\vee}$ is generically nef and satisfies $c_1(\mathcal{G}_{2}) = 0$, 
it follows that $\mathcal{G}_{2}^{\vee\vee}$ is $(H_1 \cdot H_2 \cdots H_{n-2})$-semistable.
Moreover, we obtain 
$$
\widehat{c}_2(\mathcal{G}_{2}^{\vee\vee})H_1 \cdots  H_{n-2}=0.
$$
Indeed, in \eqref{eq-Miyaoka-4} we applied the Bogomolov-Gieseker inequality to $\mathcal{G}_2$, 
and equality now holds. Thus, the above equality follows from $c_1(\mathcal{G}_2) = 0$.

Thus \cite[Theorem 1.4]{LT18} shows that $\mathcal{G}_{2}^{\vee\vee}$ is a flat locally free sheaf. 
Moreover, the torsion-free sheaf $\mathcal{G}_{2}$ is locally free in codimension two 
since $\mathcal{G}_{2} \to \mathcal{G}_{2}^{\vee\vee}$ is  isomorphic in codimension two 
by Lemma \ref{lem-c2-in-short-exact-seq}.

We finally prove that $\mathcal{G}_2 = \mathcal{G}_{2}^{\vee\vee}$ and that $X$ is smooth. 
Consider the short exact sequence
\begin{equation*}
0 \longrightarrow \mathcal{G}_1 \longrightarrow \Omega_{X}^{[1]} \longrightarrow \mathcal{G}_{2} \longrightarrow 0.
\end{equation*}
Note that $X$ satisfies Serre's condition $S_k$ for all $k$, since $X$ has klt singularities 
(and is therefore Cohen-Macaulay). 
Hence all the assumptions of \cite[Lemma~9.9]{AD14} are satisfied, and we deduce that 
$\Omega_{X}^{[1]}$ is locally free and $\mathcal{G}_2 = \mathcal{G}_{2}^{\vee\vee}$. 
In particular, it follows that $X$ is smooth by applying \cite[Theorem~6.1]{GKKP11} once more.
\end{proof}

\begin{thm} [{cf.\,\cite[Chapter 7]{Miy87}}]
\label{thm-Miyaoka-inequality-2}
Consider Setup \ref{setup-minimal-variety} and assume that $r_1 =1$.
Then, the following inequality holds$:$
\begin{equation}
\label{eq-Miyaoka-inequality-2}
\widehat{c}_2(\Omega_{X}^{[1]})H_1\cdots H_{n-2} > \frac{3}{8} c_1(\Omega_{X}^{[1]})^{2} H_1\cdots H_{n-2}.
\end{equation}
\end{thm}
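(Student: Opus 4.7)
The plan is to follow the same template as in the proof of Theorem \ref{thm-Miyaoka-inequality-1}, combining the Higgs-theoretic Bogomolov-Gieseker inequality of Proposition \ref{prop-Higgs-2} applied to the stable rank-two Higgs sheaf $\mathcal{H} := \mathcal{G}_1 \oplus \mathcal{O}_X$ produced by Proposition \ref{prop-Higgs-1}, Langer's inequality (Proposition \ref{prop-Langer-inequality-2}), Miyaoka's generic semipositivity, and the Hodge index theorem (Proposition \ref{prop-hodge-index-type}). The key difference from the $r_1 \geq 2$ case is that when $r_1 = 1$ the discriminant $\widehat{\Delta}(\mathcal{G}_1)$ vanishes identically, so the Higgs input now produces only a sign constraint on $c_1(\mathcal{G}_1)^2$ rather than a lower bound on $\widehat{c}_2(\mathcal{G}_1)$.

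First I would apply Proposition \ref{prop-Higgs-1} to $\mathcal{G}_1 \subset \Omega_X^{[1]}$ to obtain the stable Higgs sheaf $\mathcal{H}$; since $\mathcal{H}$ is a direct sum of rank-one sheaves, $c_1(\mathcal{H}) = c_1(\mathcal{G}_1)$ and $\widehat{c}_2(\mathcal{H}) = 0$, so Proposition \ref{prop-Higgs-2} yields
\begin{equation*}
c_1(\mathcal{G}_1)^2 \cdot H_1 \cdots H_{n-2} \;\leq\; 0.
\end{equation*}
Setting $\alpha := H_1 \cdots H_{n-2}$ and using that $\widehat{\Delta}(\mathcal{G}_1) = 0$, Proposition \ref{prop-Langer-inequality-2} specializes to $\widehat{\Delta}(\Omega_X^{[1]}) \cdot \alpha / n \geq -(n/d)(\mu_1 - \mu)(\mu - \mu_l)$. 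I would then combine this with Miyaoka's generic semipositivity $\mu_l \geq 0$ and $\mu_1 \leq n\mu = d$ (from \cite[Corollary 6.4]{Miy87}, \cite[Theorem 1.3]{CP19}, \cite[Theorem 5.5]{CKT21}), together with the sign constraint above applied via the Hodge index theorem to $c_1(\mathcal{G}_1)$ and $K_X$, so as to extract $\widehat{c}_2(\Omega_X^{[1]}) \cdot \alpha \geq (3/8)\, c_1(\Omega_X^{[1]})^2 \cdot \alpha$ after an optimization in the parameters $(\mu_1, \mu_l)$.

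For the strictness, the plan is to trace through the chain of inequalities and observe that equality would force simultaneous equality in each step. In particular, equality in Proposition \ref{prop-Higgs-2} forces $c_1(\mathcal{G}_1)^2 \cdot \alpha = 0$, while equality in the Hodge index theorem applied to $c_1(\mathcal{G}_1)$ and $K_X$ forces $c_1(\mathcal{G}_1)$ to be numerically proportional to $K_X$, i.e.\ $c_1(\mathcal{G}_1) \equiv \lambda K_X$ numerically for some $\lambda \in \mathbb{Q}$. Combining these two, $\lambda^2 d = c_1(\mathcal{G}_1)^2 \cdot \alpha = 0$, so $\lambda = 0$ and hence $\mu_1 = c_1(\mathcal{G}_1) \cdot K_X \cdot \alpha = 0$, contradicting $\mu_1 \geq \mu > 0$ (which is guaranteed by $\nu(K_X) \geq 2$). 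This rules out equality and yields the strict inequality.

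The main obstacle is the optimization in the second paragraph leading to the specific constant $3/8$: unlike the $r_1 \geq 2$ case, the Higgs input is merely a sign constraint rather than a direct bound on $\widehat{c}_2(\mathcal{G}_1)$, so extracting a lower bound on $\widehat{c}_2(\Omega_X^{[1]}) \cdot \alpha$ that is strictly stronger than Miyaoka's inequality $\widehat{c}_2 \geq (1/3) c_1^2$ requires a subtle combination of all four ingredients (Higgs Bogomolov-Gieseker, Langer, generic semipositivity, and Hodge index) to fully exploit $c_1(\mathcal{G}_1)^2 \cdot \alpha \leq 0$ in concert with the Harder-Narasimhan slope inequalities.
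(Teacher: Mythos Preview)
Your plan has a genuine gap: routing the argument through Proposition \ref{prop-Langer-inequality-2} cannot yield the constant $3/8$, and the Higgs constraint $c_1(\mathcal{G}_1)^2\cdot\alpha\le 0$ has no entry point in that chain. Concretely, with $r_1=1$ one has $\widehat{\Delta}(\mathcal{G}_1)=0$, so Proposition \ref{prop-Langer-inequality-2} together with $\mu_l\ge 0$ and $\mu=d/n$ gives only
\[
2\,\widehat{c}_2(\Omega_X^{[1]})\cdot\alpha \;\ge\; \tfrac{n-1}{n}\,d \;-\; \tfrac{n}{d}(\mu_1-\mu)\mu \;=\; d-\mu_1,
\]
which over the admissible range $\mu_1\in(0,d]$ does not dominate $\tfrac{3}{4}d$. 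The point is that Proposition \ref{prop-Langer-inequality-2} has already consumed the Hodge index step and Langer's combinatorial lemma; after that packaging the individual terms $c_1(\mathcal{G}_i)^2\cdot\alpha$ are no longer visible, so the extra information $c_1(\mathcal{G}_1)^2\cdot\alpha\le 0$ cannot be fed back in. Your own final paragraph essentially concedes this.

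The paper's proof avoids Proposition \ref{prop-Langer-inequality-2} entirely here and goes back one level, to the raw decomposition of $\widehat{c}_2$ coming from Lemma \ref{lem-c2-in-short-exact-seq} (equivalently Lemma \ref{lem-Langer-Theorem:5.1}):
\[
2\,\widehat{c}_2(\Omega_X^{[1]})\cdot\alpha \;\ge\; c_1(\Omega_X^{[1]})^2\cdot\alpha \;-\; c_1(\mathcal{G}_1)^2\cdot\alpha \;+\; \sum_{i\ge 2}\Big(2\widehat{c}_2(\mathcal{G}_i^{\vee\vee})-c_1(\mathcal{G}_i)^2\Big)\cdot\alpha.
\]
Now the Higgs constraint $c_1(\mathcal{G}_1)^2\cdot\alpha\le 0$ is used \emph{directly} to drop the second term, Bogomolov--Gieseker for the semistable $\mathcal{G}_i^{\vee\vee}$ replaces the sum by $-\sum_{i\ge 2}\tfrac{1}{r_i}c_1(\mathcal{G}_i)^2\cdot\alpha$, and the Hodge index theorem is applied \emph{term by term for $i\ge 2$} (not bundled through Langer's lemma) together with $\mu_i<\mu_1$ to obtain
\[
2\,\widehat{c}_2(\Omega_X^{[1]})\cdot\alpha \;\ge\; d-\mu_1+\tfrac{\mu_1^2}{d}\;=\;\tfrac{1}{d}\Big(\mu_1-\tfrac{d}{2}\Big)^2+\tfrac{3}{4}d.
\]
The extra $+\mu_1^2/d$, absent from your bound $d-\mu_1$, is exactly what produces $3/8$. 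Strictness then comes from a simple numerical contradiction: equality forces $l=2$, $\mu_2=0$, and $\mu_1=d/2$, which is incompatible with $\mu_1+r_2\mu_2=d$. Your proposed strictness argument (forcing $c_1(\mathcal{G}_1)$ proportional to $K_X$ and hence $\mu_1=0$) is attached to an inequality chain that never reaches $3/8$, so it does not apply.
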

We emphasize that equality never occurs in \eqref{eq-Miyaoka-inequality-2}.

\begin{proof}
Set $\polar := H_{1} \cdots H_{n-2}$. 
Since $\mathcal{G}_1 \subset \Omega_{X}^{[1]}$ and $\mu_1 > 0$ holds, 
we obtain 
\begin{equation}
\label{eq-square-seminegative}
c_1(\mathcal{G}_1)^2 \polar \leq 0
\end{equation}
by Propositions \ref{prop-Higgs-1} and \ref{prop-Higgs-2}.

Hence, as in the proof of Theorem~\ref{thm-Miyaoka-inequality-1}, we obtain
\begin{align}
\begin{split}
\label{eq-Miyaoka-r1-1}
2\widehat{c}_2(\Omega_{X}^{[1]})\polar
&\underalign{\text{(by \eqref{eq-Miyaoka-4})}}{\ge} \quad
\left(- c_1(\mathcal{G}_1)^2 -\sum_{i=2}^{l} \frac{1}{r_i} c_1(\mathcal{G}_{i})^{2}\right)\polar
+ c_1(\Omega_{X}^{[1]})^2 \polar\\
&\underalign{\text{(by \eqref{eq-square-seminegative})}}{\ge} \quad
c_1(\Omega_{X}^{[1]})^2\polar -\sum_{i=2}^{l} \frac{1}{r_i} c_1(\mathcal{G}_{i})^{2}\polar \\
&\underalign{\text{(by \eqref{eq-Miyaoka-2})}}{\ge} \quad
d- \mu_1+ \frac{\mu_{1}^{2}}{d} \\
& \ \ \ \ge \frac{3}{4}d.
\end{split}
\end{align}
Here, for the last inequality, we used the fact that the function
$
f(x) := d - x + \frac{x^2}{d}
$
attains its minimum value $\frac{3}{4}d$ at $x = \frac{d}{2}$. 
Hence the desired inequality \eqref{eq-Miyaoka-inequality-2} follows.

We finally show that equality is never attained in \eqref{eq-Miyaoka-inequality-2}.  
If equality in \eqref{eq-Miyaoka-inequality-2} were to hold, 
then by \eqref{eq-Miyaoka-2} and \eqref{eq-Miyaoka-r1-1} we would obtain $l = 2$, 
\[
\mu_2 = 0, 
\quad \text{and} \quad 
\mu_1 = \frac{d}{2}.
\]
However, this contradicts the relation $\mu_1 + r_2 \mu_2 = d$.
\end{proof}

By combining Theorems \ref{thm-Miyaoka-inequality-1} and \ref{thm-Miyaoka-inequality-2}, 
we derive Miyaoka's inequality for minimal projective klt varieties. 
Moreover, we analyze the case of equality and obtain a uniformization theorem.

\begin{thm}
[{cf.\,\cite[Chapter 7]{Miy87}, \cite[Theorem 5.6]{Lan02}, \cite[Theorem 7.2]{RT16}}]
\label{thm-Miyaoka-equality-case}
Consider Setup \ref{setup-minimal-variety}. 
Then, Miyaoka's inequality holds$:$
\begin{equation}
\label{eq-Miyaoka-equality-case}
\left( 3\widehat{c}_2(\Omega_{X}^{[1]}) - c_{1}(\Omega_{X}^{[1]})^{2}\right)H_1 \cdots H_{n-2} \ge 0
\end{equation}

Moreover, if equality holds in \eqref{eq-Miyaoka-equality-case}, then there exists a finite quasi-\'etale cover $A \times S \rightarrow X$, where $A$ is an abelian variety and $S$ is a smooth projective surface whose universal cover is 
the unit ball in $\C^2$. In particular, the canonical divisor $K_X$ is semi-ample.
\end{thm}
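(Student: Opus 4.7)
The plan is first to derive Miyaoka's inequality by combining Theorems \ref{thm-Miyaoka-inequality-1} and \ref{thm-Miyaoka-inequality-2} according to the rank $r_1$ of the maximal destabilizing subsheaf of $\Omega_X^{[1]}$. Setting $\beta := H_1 \cdots H_{n-2}$: when $r_1 \geq 2$, Theorem \ref{thm-Miyaoka-inequality-1} yields
$$\widehat{c}_2(\Omega_X^{[1]})\cdot\beta \;\geq\; \tfrac{r_1}{2(r_1+1)}\, c_1(\Omega_X^{[1]})^{2}\cdot\beta \;\geq\; \tfrac{1}{3}\, c_1(\Omega_X^{[1]})^{2}\cdot\beta,$$
since $\tfrac{r_1}{2(r_1+1)}$ is increasing in $r_1$ and equals $\tfrac{1}{3}$ at $r_1 = 2$; when $r_1 = 1$, Theorem \ref{thm-Miyaoka-inequality-2} gives the strict bound $\widehat{c}_2\cdot\beta > \tfrac{3}{8}c_1^{2}\cdot\beta > \tfrac{1}{3}c_1^{2}\cdot\beta$. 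This proves $(3\widehat{c}_2 - c_1^{2})\cdot\beta \geq 0$, and equality forces $r_1 = 2$ together with equality in Theorem \ref{thm-Miyaoka-inequality-1}.

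In the equality case, I replace $X$ by its maximally quasi-\'etale cover (which preserves all hypotheses by \cite{GKP16b}) and apply Theorem \ref{thm-Miyaoka-inequality-1} with $r_1 = 2$: this shows $X$ is smooth, and the HN filtration is a short exact sequence
$$0 \to \mathcal{G}_1 \to \Omega_X^{1} \to \mathcal{G}_2 \to 0,$$
where $\mathcal{G}_1$ is locally free of rank $2$ with $(3c_2(\mathcal{G}_1) - c_1(\mathcal{G}_1)^{2})\cdot\beta = 0$ and $c_1(\mathcal{G}_1) = c_1(K_X)$, while $\mathcal{G}_2$ is flat locally free of rank $n - 2$ with numerically trivial Chern classes. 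Dualizing gives $0 \to \mathcal{G}_2^{\vee} \to T_X \to \mathcal{G}_1^{\vee} \to 0$, which is analytically locally split by Lemma \ref{lem-(co)tangent-extension-of-proj-flat}. To verify that $\mathcal{G}_2^{\vee}$ is an integrable foliation, I examine the O'Neill tensor $\wedge^{2}\mathcal{G}_2^{\vee} \to \mathcal{G}_1^{\vee}$: the domain is flat, hence semistable of slope $0$ with respect to any polarization, while $\mathcal{G}_1^{\vee}$ is semistable of slope $-\tfrac{1}{2} K_X^{2}\cdot\beta < 0$ with respect to $K_X\cdot\beta$ (using $\nu(K_X) \geq 2$ to ensure $K_X^{2}\cdot\beta > 0$). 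The image of any nonzero morphism would have both nonnegative and strictly negative slope, a contradiction, so the O'Neill tensor vanishes and $\mathcal{G}_2^{\vee}$ defines a foliation.

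Pereira-Touzet's decomposition theorem for numerically flat foliations \cite{PT13} then produces a finite \'etale cover $X' \to X$ with $X' \cong A \times S$, where $A$ is an abelian variety of dimension $n-2$ tangent to $\mathcal{G}_2^{\vee}$ and $S$ is a smooth projective surface. Under this identification $\mathcal{G}_1 = p_S^{*}\Omega_S^{1}$, and by taking $\beta$ pulled back from an ample class on $A$, Miyaoka's equality on $X$ restricts to $3c_2(\Omega_S^{1}) = c_1(\Omega_S^{1})^{2}$ on $S$. Since $K_{X'} = p_S^{*}K_S$ inherits nefness and $\nu(K_S) = \nu(K_{X'}) \geq 2 = \dim S$, the canonical bundle $K_S$ is nef and big, so $S$ is a smooth minimal surface of general type satisfying the Miyaoka-Yau equality. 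The classical Yau-Miyaoka uniformization \cite{Yau77, Miy84} identifies $S$ as a compact ball quotient with ample $K_S$, yielding the desired product description and the semi-ampleness of $K_{X'}$, which descends to $K_X$ under the finite quasi-\'etale cover. The main obstacle is the integrability step: establishing the vanishing of the O'Neill tensor via the slope incompatibility, which hinges critically on the positivity forced by $\nu(K_X)\geq 2$; once integrability is in hand, Pereira-Touzet reduces the question to the classical surface case.
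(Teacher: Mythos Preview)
Your proof is correct and follows essentially the same architecture as the paper's: derive the inequality from Theorems~\ref{thm-Miyaoka-inequality-1} and~\ref{thm-Miyaoka-inequality-2}, pass to a maximally quasi-\'etale cover, use equality in Theorem~\ref{thm-Miyaoka-inequality-1} with $r_1=2$ to get smoothness and the two-step filtration, integrate the flat subbundle $\mathcal{G}_2^\vee\subset T_X$, apply \cite{PT13}, and finish with the classical surface uniformization.

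The one genuine difference is the integrability step. The paper simply quotes \cite[Theorem~5.2]{LPT18}, which asserts that a numerically flat saturated subsheaf of $T_X$ with trivial first Chern class is automatically a foliation. You instead give a direct slope argument: the O'Neill tensor $\wedge^2\mathcal{G}_2^\vee\to\mathcal{G}_1^\vee$ maps a semistable sheaf of slope $0$ to a semistable sheaf of strictly negative slope (with respect to $K_X\cdot\beta$, using $K_X^2\cdot\beta>0$ from $\nu(K_X)\ge 2$), hence vanishes. This is exactly the mechanism underlying the cited result in this special case, so your approach is more self-contained but not conceptually new. Two cosmetic points: the appeal to Lemma~\ref{lem-(co)tangent-extension-of-proj-flat} for dualizing is unnecessary once everything is locally free on a smooth $X$; and the claim that $\dim A=n-2$ out of \cite{PT13} is not immediate from the statement of that theorem (which a priori allows $\dim S\le 2$), though you do supply the missing step a sentence later via $\nu(K_S)=\nu(K_{X'})\ge 2$.
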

\begin{proof}
We easily obtain the desired inequality \eqref{eq-Miyaoka-equality-case} 
from Theorems \ref{thm-Miyaoka-inequality-1} and \ref{thm-Miyaoka-inequality-2},
by noting that 
\begin{equation}
\label{eq-1/3-inequality}
\frac{x}{2(x+1)} \geq \frac{1}{3} 
\end{equation}
for any real number $x \geq 2$.

Assume that equality holds in \eqref{eq-Miyaoka-equality-case}. 
After taking a quasi-\'etale cover, we may assume that $X$ is maximally quasi-\'etale.
Let $r_1$ be the rank of the maximal destabilizing subsheaf of $\Omega_{X}^{[1]}$ 
with respect to $K_X \cdot H_{1} \cdots H_{n-2}$. 
Then $r_1 = 2$ by Theorems \ref{thm-Miyaoka-inequality-1}, \ref{thm-Miyaoka-inequality-2}, and \eqref{eq-1/3-inequality}.
Moreover, applying Theorem \ref{thm-Miyaoka-inequality-1} again,
we see that $X$ is smooth and that there exists an exact sequence of locally free sheaves
\[
0 \longrightarrow \mathcal{G}_1 \longrightarrow \Omega_{X}^{1} \longrightarrow \mathcal{G}_2 \longrightarrow 0,
\]
where $\mathcal{G}_{2}$ is flat.
Since $\mathcal{G}_2^{\vee} \subset T_X$ has trivial first Chern class,  
the sheaf $\mathcal{G}_2^{\vee}$ defines a flat foliation of codimension two  
by \cite[Theorem~5.2]{LPT18}. 
Thus, by \cite[Theorem~D]{PT13}, we conclude that 
$X \cong A \times S$ up to a finite \'etale cover, 
where $A$ is an abelian variety and $S$ is a point, a smooth curve, or a smooth surface. 
By noting $\nu(K_X) = \nu(K_S) \geq 2$, we see that $S$ is a smooth projective surface of general type. 
Finally, equality in \eqref{eq-Miyaoka-equality-case} implies that $3c_2(S) - c_1(S)^2 = 0$, 
which shows that the universal cover of $S$ is the unit ball in $\C^2$ by \cite[Proposition~2.1.1]{MR0744605}.
\end{proof}

By Theorem \ref{thm-main1}, we immediately obtain Corollary \ref{cor-Abundance-theorem-c2}.

\begin{proof}[Proof of Corollary \ref{cor-Abundance-theorem-c2}]
By Theorem \ref{thm-main1}(A), we have 
$c_1(\Omega_{X}^{[1]})^2 H_1 \cdots H_{n-2} = 0$. 
Hence all the assumptions of Theorem \ref{thm-main1}(B) are satisfied, 
and we conclude that $K_X$ is semi-ample.
\end{proof}

\section{Proof of Theorem \ref{thm-main2}}
\label{Sec-proof3}
This section is devoted to the proof of Theorem \ref{thm-main2}. 
We begin by recalling the definition of the Albanese map for a projective klt variety $X$. 
Given a resolution $\pi \colon \widetilde{X} \to X$ of singularities, 
the Albanese map $\widetilde{\alpha} \colon \widetilde{X} \to \Alb(\widetilde{X})$ 
factors through $X$ (see, for example, \cite[Proposition~9.12]{Uen75}):
\[
\xymatrix{
\widetilde{X}\ar[r]^{\pi}\ar[dr]_{\widetilde{\alpha}} & X\ar[d]^{\alpha} \\
& \Alb(\widetilde{X}). 
}
\]
The morphism $\alpha \colon X \to \Alb(X) \coloneqq \Alb(\widetilde{X})$ 
is independent of the choice of resolution of singularities and is called the \textit{Albanese map} of $X$. 
Note that $\dim \Alb(X) = q(\widetilde{X}) = q(X)$ since $X$ has rational singularities, 
where $q(X) \coloneqq h^1(X, \mathcal{O}_X)$ denotes the irregularity of $X$. 
The \textit{augmented irregularity} $\widehat{q}(X)$ of $X$ is defined as
\[
\widehat{q}(X) \coloneqq \sup \{ q(\widehat{X}) \,\mid\, \widehat{X} \to X \text{ is a finite quasi-\'etale cover} \} 
\in \mathbb{Z}_{\geq 0} \cup \{\infty\}.
\]
The following theorem is an immediate consequence of \cite{ MW21} 
and the Beauville-Bogomolov-Yau decomposition theorem for klt varieties 
(see \cite{GKP16b, Dru18, GGK19, HP19, Cam20}). 

\begin{thm}
\label{thm-fibre-zero-aug}
Let $X$ be a projective klt variety with nef anti-canonical divisor. 
Then, after replacing $X$ with a finite quasi-\'etale cover, there exist the following data:
\begin{enumerate}[label=$(\arabic*)$]
\item an abelian variety $A$ of dimension $q := \widehat{q}(X)$,
\item a projective klt variety $Y$ with $K_Y = 0$ and $\widehat{q}(Y) = 0$,
\item a rationally connected klt variety $F$ with $-K_F$ nef,
\item a group homomorphism $\rho \colon \pi_1(A) \to \mathrm{Aut}^0(F)$
\end{enumerate}
such that
\[
X \cong \Big( (\C^q \times F)/\pi_1(A) \Big) \times Y,
\]
where $\pi_1(A)$ acts diagonally on $\C^q \times F$ via 
$\rho \colon \pi_1(A) \to \mathrm{Aut}^0(F)$.
In particular, the Albanese map $\alpha \colon X \to A$ is a locally trivial fibration
whose fibres are isomorphic to $F \times Y$.
\end{thm}

\begin{proof}
By combining \cite{MW21} with the Beauville-Bogomolov-Yau decomposition theorem for klt varieties, 
and after replacing $X$ with a finite quasi-\'etale cover, 
we obtain an abelian variety $A$ of dimension $q := \widehat{q}(X)$, 
a projective klt variety $Y$ with $K_Y = 0$ and $\widehat{q}(Y) = 0$, 
a rationally connected klt variety $F$ with nef anti-canonical divisor, 
and a group homomorphism $\rho \colon \pi_1(A) \to \Aut{F}$ such that
\[
X \cong \Big( (\C^q \times F)/\pi_1(A) \Big) \times Y.
\]
Here, we used the fact that every linear representation of $\pi_1(Y)$ has finite image
(see \cite[Theorem~I]{GGK19}).
Since $X$ is projective, the image of the composition 
\[
\pi_1(A) \longrightarrow \Aut{F} \longrightarrow \Aut{F}/\mathrm{Aut}^0(F)
\] 
is finite (see, for example, \cite[Lemma~3.4]{M22}). 
Hence, after replacing $X$ with a finite \'etale cover, 
we may assume that $\rho$ takes values in $\mathrm{Aut}^0(F)$.  
\end{proof}

After proving that the tangent sheaf $\mathcal{T}_X$ is generically nef, 
we now complete the proof of Theorem~\ref{thm-main2}. 

\begin{lem}
\label{lem-generic-semipos-tangent-antican-nef}
Let $X$ be a projective klt variety with nef anti-canonical divisor.
Then, the tangent sheaf $\mathcal{T}_X$ is generically nef.
In particular, for any ample Cartier divisors $H_1, \ldots, H_{n-2}$ on $X$, we have
$$
\widehat{c}_2(\mathcal{T}_X) \cdot H_1 \cdots H_{n-2} \ge 0.
$$
\end{lem}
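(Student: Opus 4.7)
The plan is to handle the lemma in two steps. First we establish the generic nefness of $\mathcal{T}_X$, which is the substantive content. Then the Chern class inequality follows immediately by applying the results of Section \ref{section:3} to $\mathcal{E}=\mathcal{T}_X$.

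For the generic nefness of $\mathcal{T}_X$, this is a now-standard extension to klt varieties of Cao's theorem in the smooth case, and I would either invoke it directly or argue by contradiction as follows. Suppose $\mathcal{T}_X$ fails to be generically nef, so that for some ample divisors $H_1,\dots,H_{n-1}$ there exists a saturated proper torsion-free quotient $q\colon \mathcal{T}_X\twoheadrightarrow \mathcal{Q}$ with
$$
\mu_{H_1\cdots H_{n-1}}(\mathcal{Q}) \;=\; \mu^{\min}_{H_1\cdots H_{n-1}}(\mathcal{T}_X)\;<\;0.
$$
The kernel $\mathcal{F}:=\ker q \subset \mathcal{T}_X$ is then a saturated subsheaf, hence a foliation on $X$. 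Since $-K_X=c_1(\mathcal{T}_X)$ is nef,
$$
c_1(\mathcal{F})\cdot H_1\cdots H_{n-1} \;=\; (-K_X)\cdot H_1\cdots H_{n-1}\;-\;c_1(\mathcal{Q})\cdot H_1\cdots H_{n-1}\;>\;0,
$$
so $\mathcal{F}$ is a foliation with strictly positive slope. By the algebraic integrability theorem of Campana--P\u{a}un (see \cite{CP19} and its klt extension \cite{CKT21}), the foliation $\mathcal{F}$ is induced by a rational fibration $X\dashrightarrow Y$, and standard arguments exploiting the nefness of $-K_X$ together with positivity of the relative canonical bundle of this fibration then yield the required contradiction.

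For the second assertion, note that $\mathcal{T}_X$ is generically nef by the first part, and $\det\mathcal{T}_X\cong\mathcal{O}_X(-K_X)$ is a nef $\mathbb{Q}$-line bundle, so the hypotheses of the results in Section \ref{section:3} are met with $\mathcal{E}=\mathcal{T}_X$ and $D=-K_X$. If $\nu(-K_X)\leq 1$, then $c_1(\mathcal{T}_X)^2\cdot H_1\cdots H_{n-2}=0$ and Theorem \ref{thm-c2-inequality-nu1} directly gives the desired non-negativity. If instead $\nu(-K_X)\geq 2$, the conclusion follows from Corollary \ref{cor-inequality-c2-nu2}. The main obstacle is thus the generic nefness of $\mathcal{T}_X$ in the klt setting; once this is in hand, the Chern class inequality is an essentially routine application of the machinery already built in Section \ref{section:3}.
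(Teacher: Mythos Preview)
Your deduction of the Chern class inequality from generic nefness is correct and matches the paper exactly: split by $\nu(-K_X)$ and apply Theorem \ref{thm-c2-inequality-nu1} or Corollary \ref{cor-inequality-c2-nu2}.

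For generic nefness itself, the contradiction argument you sketch contains a genuine gap. You write that $\mathcal{F}=\ker q$ ``is then a saturated subsheaf, hence a foliation on $X$.'' Saturation does not imply integrability: a foliation must be closed under Lie bracket, and there is no a priori reason for the penultimate step $\mathcal{E}_{l-1}$ of the Harder--Narasimhan filtration of $\mathcal{T}_X$ to satisfy $[\mathcal{E}_{l-1},\mathcal{E}_{l-1}]\subset\mathcal{E}_{l-1}$. The standard slope trick gives this for the \emph{maximal} destabilizing subsheaf $\mathcal{E}_1$ (since $\mu^{\min}(\wedge^2\mathcal{E}_1)\geq 2\mu_1>\mu^{\max}(\mathcal{T}_X/\mathcal{E}_1)$), not for $\mathcal{E}_{l-1}$. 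A second issue: the Campana--P\u{a}un integrability theorem requires $\mu^{\min}_{\alpha}(\mathcal{F})>0$, whereas you only establish $c_1(\mathcal{F})\cdot\alpha>0$; these coincide when $\mathcal{F}$ is semistable but not in general. Finally, the ``standard arguments'' you invoke at the end are essentially the substance of Ou's proof and are not routine.

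The paper bypasses all of this. It takes a small $\Q$-factorialization $\rho\colon\widetilde{X}\to X$ (\cite[Corollary 1.37]{Kol13}), so that $K_{\widetilde{X}}=\rho^*K_X$ remains nef and reflexive pullback identifies subsheaves of $\Omega_X^{[1]}$ with subsheaves of $\Omega_{\widetilde{X}}^{[1]}$ of the same slope against $\rho^*H_1\cdots\rho^*H_{n-1}$; then \cite[Theorem 1.4]{Ou17} applies directly on the $\Q$-factorial variety $\widetilde{X}$. If your intent was to cite generic nefness as known, this reference together with the $\Q$-factorialization step is the clean route.
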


\begin{proof}
Note that when $X$ is $\Q$-factorial, the conclusion was proved in \cite[Theorem~1.4]{Ou17}.
In the general case, given a subsheaf $\mathcal{E} \subset \Omega_X^{[1]}$, 
we show that
\[
c_1(\mathcal{E}) \cdot H_1 \cdots H_{n-1} \leq 0
\]
for any ample divisors $H_1, \ldots, H_{n-1}$ on $X$.
Let $\rho \colon \widetilde{X} \to X$ be a $\Q$-factorization as in \cite[Corollary~1.37]{Kol13},
and set $\widetilde{\mathcal{E}} := \rho^{[*]}\mathcal{E}$.
Since $\rho$ is a small birational morphism
(i.e.,\,it does not contract any divisor), we have
\[
K_{\widetilde{X}} = \rho^*K_X
\quad \text{and} \quad
c_1(\widetilde{\mathcal{E}}) = \rho^*c_1(\mathcal{E}).
\]
We see that $-K_{\widetilde{X}}$ is also nef.
Moreover, we have $\Omega_{\widetilde{X}}^{[1]} \cong \rho^{[*]}\Omega_X^{[1]}$ by reflexivity,
and thus $\widetilde{\mathcal{E}} $ can be regarded as a subsheaf  of $\Omega_{\widetilde{X}}^{[1]}$.
By \cite[Theorem~1.4]{Ou17}, we deduce that
\[
c_1(\mathcal{E}) \cdot H_1 \cdots H_{n-1}
= c_1(\widetilde{\mathcal{E}}) \cdot \rho^*H_1 \cdots \rho^*H_{n-1}
\leq 0.
\]
This shows that $\mathcal{T}_X$ is generically nef.
The latter conclusion follows from Corollary~\ref{cor-inequality-c2-nu2} and Theorem~\ref{thm-c2-inequality-nu1}.
\end{proof}

\begin{proof}[Proof of Theorem~\ref{thm-main2}]
Let $X$ be a projective klt variety of dimension $n$ satisfying the assumptions of Theorem~\ref{thm-main2}. 
Note that, by Proposition~\ref{prop-vanishing-intersection-form} and Lemma~\ref{lem-generic-semipos-tangent-antican-nef}, 
we have $\widehat{c}_2(\mathcal{T}_{X}) \cdot D_1 \cdots D_{n-2} = 0$ for all nef $\Q$-divisors $D_1, \ldots, D_{n-2}$.

It suffices to prove the conclusion after replacing $X$ with a finite quasi-\'etale cover. 
Hence, we may assume that $X$ is maximally quasi-\'etale. 
Furthermore, by Theorem~\ref{thm-fibre-zero-aug}, 
we may assume that 
\[
X \cong \big((\C^q \times F)/\pi_1(A)\big) \times Y
\]
and that $X$ satisfies the properties listed in Theorem~\ref{thm-fibre-zero-aug}. 
Recall that $Y$ is a projective klt variety with $K_Y = 0$ and $\widehat{q}(Y) = 0$, 
and that $F$ is a rationally connected klt variety.
Then, from the assumption on $\widehat{c}_2(\mathcal{T}_{X}) $, 
we obtain the following claim.

\begin{claim}
\label{claim-single}
The variety $Y$ is a single point.
\end{claim}

\begin{proof}[Proof of Claim \ref{claim-single}]
Assume for a contradiction that $Y$ is not a point.
Consider the splitting of the tangent sheaf 
$$
\mathcal{T}_X = \pr_{1}^{*}\mathcal{T}_{Z} \oplus \pr_{2}^{*}\mathcal{T}_{Y}
$$
arising from the product structure $X \cong Z \times Y$, 
where 
\[
Z := (\C^q \times F)/\pi_1(A)
\]
and $\pr_{i}$ is the natural projection. 
Let $D$ be an ample divisor on $Z$ and let $H$ be an ample divisor on $X$. 
Set $m := \dim Z$. Then, Proposition~\ref{prop-vanishing-intersection-form} shows that
\[
\widehat{c}_2(\mathcal{T}_{X}|_Y) \cdot (H|_Y)^{n-m-2}
= \widehat{c}_2(\mathcal{T}_{X}) \cdot H^{n-m-2} \cdot (\pr_{1}^*D)^m = 0.
\]
By combining Lemma~\ref{lem-c2-in-short-exact-seq} and Lemma~\ref{lem-generic-semipos-tangent-antican-nef}, 
we deduce that
\[
\widehat{c}_2(\mathcal{T}_{Y}) \cdot (H|_Y)^{n-m-2} = 0.
\]
Since $K_Y = 0$, it follows from \cite[Theorem~1.2]{LT18} that 
$Y$ is a quasi-\'etale quotient of an abelian variety, contradicting $\widehat{q}(Y) = 0$. 
\end{proof}

From now on, let $\alpha \colon X \to A$ be the Albanese morphism of $X$ as in Theorem~\ref{thm-fibre-zero-aug}. 
By construction and Claim~\ref{claim-single}, this is a locally trivial fibration with fibre $F$, where $F$ is a rationally connected klt variety. 
By \cite[Theorem~1.5]{EIM21} (cf.~\cite[Theorem~1.2]{CCM21}), we have 
\begin{align}\label{eq-HM}
\nu(-K_{X}) = \nu(-K_{F}) \leq \dim F. 
\end{align}
Thus, to conclude the proof of Theorem~\ref{thm-main2}, it remains to show that $\dim F \leq 1$. 
To this end, we consider three cases according to the numerical dimension $\nu(-K_X)$ of $-K_X$. 
We first treat the case $\nu(-K_X) = 0$.

\begin{case}[The case $\nu(-K_X) = 0$]
In this case, by \cite[Theorem~1.2]{LT18}, the variety $X$ is a quasi-\'etale quotient of an abelian variety, 
and thus satisfies the desired conclusion. 
\end{case} 

In the case $\nu(-K_X) \geq 1$, we consider the relative tangent sheaf $\mathcal{T}_{X/A}$. 
Note that we have the splitting 
$$
\mathcal{T}_X = \mathcal{T}_{X/A} \oplus \alpha^{*}\mathcal{T}_{A}
\cong \mathcal{T}_{X/A} \oplus \mathcal{O}_{X}^{\oplus q},
$$
since $\alpha \colon X \to A$ is a locally constant fibration. 

\begin{case}[The case $\nu(-K_X) \geq 2$]
We show that this situation cannot occur.
In this case, we have $\dim F \geq 2$ by \eqref{eq-HM}. 
The relative tangent sheaf $\mathcal{T}_{X/A}$ satisfies the assumptions of Corollary~\ref{cor-inequality-c2-nu2}, 
and hence, we obtain a surjective morphism of sheaves $\mathcal{T}_{X/A} \twoheadrightarrow \mathcal{Q}$, 
where $\mathcal{Q}$ is a flat locally free sheaf such that $\rk \mathcal{Q} = \dim F - 1\geq 1$. 
Taking dual and restricting to the fibre, 
we obtain a flat subsheaf 
$$
(\mathcal{Q}|_{F}) ^{\vee}\hookrightarrow (\mathcal{T}_{X/A}|_{F})^{\vee} \cong \Omega_F^{[1]}. 
$$
The flat locally free sheaf $(\mathcal{Q}|_{F}) ^{\vee}$ is trivial 
since $F$ is rationally connected (in particular, simply connected). 
This implies that $q(F) \neq 0$, contradicting the assumption that $F$ is rationally connected.
\end{case}

\begin{case}[The case $\nu(-K_X)=1$]
Recall that it remains only to show that $\dim F \leq 1$. 
To this end, assume $\dim F \geq 2$ and derive a contradiction.

Fix an ample divisor $H$ on $X$ and set $H_{\varepsilon} := -K_X + \varepsilon H$. 
Consider the $(H_{\varepsilon} \cdot H_1 \cdots H_{n-2})$-Harder-Narasimhan filtration of $\mathcal{T}_{X/A}$: 
\begin{align}\label{fil-1}
0 =: \mathcal{E}_0 \subsetneq \mathcal{E}_1 \subsetneq \cdots \subsetneq \mathcal{E}_l := \mathcal{T}_{X/A}.
\end{align}
By Theorem~\ref{thm-c2-inequality-nu1}, this filtration is independent of $\varepsilon$ for $0 < \varepsilon \ll 1$. 
Moreover, the graded pieces $\mathcal{G}_i := \mathcal{E}_i/\mathcal{E}_{i-1}$ 
satisfy the following properties:
\begin{enumerate}
\item $\mathcal{G}_i$ is a reflexive sheaf of rank $r_i$ on $X$.
\item $\mathcal{G}_i|_{X_{\reg}}$ is locally free and projectively flat, with associated representation $\rho_i \colon \pi_1(X_{\reg}) \to \mathbb{P}\GL(r_{i},\C)$.
\item $\det(\mathcal{G}_i)$ is a $\Q$-line bundle, and 
$c_1(\mathcal{G}_i) = \lambda_i c_{1}(\mathcal{T}_{X/A}) = \lambda_i c_{1}(-K_{X})$. 
\end{enumerate}
Then, by applying \cite[Proposition~3.3]{GKP21} to each $\mathcal{G}_i$, 
there exists a reflexive sheaf $\mathcal{L}_{i}$ of rank one 
such that $\mathcal{G}_i = \mathcal{F}_{i} \otimes \mathcal{L}_{i}$, 
where $\mathcal{F}_{i}$ admits a filtration 
\begin{align}\label{fil-2}
0 =: \mathcal{F}_i^{0} \subsetneq \mathcal{F}_i^{1} \subsetneq \cdots \subsetneq \mathcal{F}_i^{r_{i}-1} \subsetneq \mathcal{F}_i^{r_{i}} := \mathcal{F}_i 
\end{align}
such that each graded piece $\mathcal{F}_i^{k}/\mathcal{F}_i^{k-1}$ is a flat line bundle, 
and $\mathcal{F}_i^{1}$ is a trivial line bundle on $X$. 

Now, by a straightforward computation, 
we have 
$$
\lambda_{i} c_{1}(-K_{X})
= c_{1}(\mathcal{G}_i)
=r_{i} c_{1}(\mathcal{L}_{i}).
$$
Then, from the properties of the Harder-Narasimhan filtration \eqref{fil-1} and Lemma~\ref{lem-generic-semipos-tangent-antican-nef}, we deduce that
\begin{equation}
\label{eq-HNfiltration-nu11}
\frac{\lambda_1}{r_{1}} > \frac{\lambda_2}{r_{2}} > \cdots > \frac{\lambda_l}{r_{l}} \;
\underalign{\text{(by Lem.~\ref{lem-generic-semipos-tangent-antican-nef})}}{\geq}
0
\quad \text{and} \quad \sum_{i=1}^{l} \lambda_{i} \underalign{\text{(by \eqref{fil-1})}}{=} 1.
\end{equation}

We now show that $\lambda_{l} > 0$.
Suppose instead that $\lambda_{l} = 0$. 
Then, the $\Q$-line bundle $\mathcal{L}_l$ is numerically trivial. 
On the other hand, by definition, there is a surjective morphism of sheaves
$$
\mathcal{T}_{X/A} = \mathcal{E}_{l} \twoheadrightarrow
\mathcal{E}_{l}/\mathcal{E}_{l-1} = \mathcal{G}_{l} 
= \mathcal{F}_{l} \otimes \mathcal{L}_{l}
\twoheadrightarrow \mathcal{F}_l^{r_{l}}/\mathcal{F}_l^{r_{l}-1} \otimes \mathcal{L}_{l} =: \mathcal{M}.
$$
Since $X$ is maximally quasi-\'etale and $c_1(\mathcal{M})=0$, the sheaf $\mathcal{M}$ is a flat line bundle by \cite[Theorem~1.4]{LT18} (see also the proof of Lemma~\ref{lem-Q-Cartier-of-Gi}).
By taking  dual and restricting to the fibre, 
we obtain the inclusion 
$$
(\mathcal{M}|_{F})^{\vee} \hookrightarrow (\mathcal{T}_{X/A}|_{F})^{\vee} \cong \Omega_F^{[1]}. 
$$
The flat line bundle $(\mathcal{M}|_{F})^{\vee}$ is trivial 
since $F$ is rationally connected. 
This implies that $q(F) \neq 0$, which contradicts the fact that $F$ is rationally connected.

Consequently, we obtain $\lambda_l > 0$. 
Hence, by \eqref{eq-HNfiltration-nu11} and $\dim F \geq 2$, 
we deduce that $\lambda_i / r_i < 1$ for every $i$. 
This establishes Claim~\ref{claim-extremal-nefness} below, 
which implies that $K_F$ is nef and therefore numerically trivial. 
This contradicts \eqref{eq-HM}, and thus we conclude that $\dim F \leq 1$.

\begin{claim}
\label{claim-extremal-nefness}
$F$ has no $K_F$-negative extremal contraction. In particular, $K_F$ is nef.
\end{claim}

\begin{proof}[Proof of Claim \ref{claim-extremal-nefness}]
Assume that $F$ admits a $K_F$-negative extremal contraction $\phi \colon F \to W$. 
By the argument in \cite[Claim~5.8 in the ArXiv version]{GKP21}, it suffices to show that 
\[
R^p \phi_{*}\Omega_{F}^{[1]} = 0 \quad \text{for all } p > 0. 
\]
To this end, restricting \eqref{fil-1} and \eqref{fil-2} to the fibre $F$ and taking duals, 
it is enough to verify that
\begin{equation}\label{eq-vanishing}
R^p \phi_{*}
\Big( 
\big(\mathcal{F}_{i}^{k}/\mathcal{F}_{i}^{k-1}\big)^{\vee} \otimes \mathcal{L}_{i}^{\vee}
\Big) 
= 0 \quad \text{for all } p > 0. 
\end{equation}
Note that 
\[
\mathcal{O}_F(-K_{F}) \otimes
\big(\mathcal{F}_{i}^{k}/\mathcal{F}_{i}^{k-1}\big)^{\vee} \otimes \mathcal{L}_{i}^{\vee} 
\]
is numerically equivalent to $(1 - \lambda_{i}/r_{i}) c_{1}(-K_{F})$, 
and is therefore $\phi$-ample since $\lambda_i / r_i < 1$. 
Thus, by the relative vanishing theorem 
(see \cite[Theorem~1-2-5]{KMM87}), 
we obtain the desired vanishing \eqref{eq-vanishing}.
\end{proof}

\end{case}
In summary, we see that in all cases $\dim F \leq 1$, as required.
This completes the proof.  
\end{proof}

\bibliographystyle{alpha}
\bibliography{ref_minimal.bib}
\end{document}